\newtheorem{prop}{Proposition}
\newtheorem{thm}{Theorem}
\newtheorem{lemma}{Lemma}
\newtheorem{assume}{Assumption}
\newtheorem{remark}{Remark}
\newtheorem{definition}{Definition}
\newtheorem{corollary}{Corollary}
\numberwithin{prop}{section}
\numberwithin{thm}{section}
\numberwithin{lemma}{section}
\numberwithin{assume}{section}
\numberwithin{remark}{section}
\numberwithin{equation}{section}
\numberwithin{definition}{section}
\numberwithin{corollary}{section} 
\DeclareMathOperator*{\lambdargmin}{argmin}
\def \a {\alpha}
\def \E {\mathbb{E}}
\def \R {\mathbb{R}}
\def \cP {\mathcal{P}}
\def \wt {\widetilde}
\def \pa {\partial}
\def \cL {\mathcal{L}}
\def \cM {\mathcal{M}}
\def \cC {\mathcal{C}}
\def \W {\mathcal{W}}
\def \N {\mathbb{N}}
\def \ld {\lambda}
\def \ka{\kappa}
\def \ol{\overline}
\def \cF{\mathcal{F}}
\def \ul{\underline}
\def \k{\kappa}
\def \e{\epsilon}
\def \cM{\mathcal{M}}
\def \bs{\boldsymbol}
\def \h{\hat}
\title[]{Propagation of Chaos of Forward-Backward Stochastic Differential Equations with Graphon Interactions}
\author[]{Erhan Bayraktar} \thanks{E. Bayraktar is partially supported by the National Science Foundation under grant DMS2106556 and by the Susan M. Smith chair} 
\address{Department of Mathematics, University of Michigan}
\email{erhan@umich.edu}
\author[]{Ruoyu Wu}
\address{Department of Mathematics,  Iowa State University}
\email{ruoyu@iastate.edu}
\author[]{Xin Zhang} 
\address{Department of Mathematics, University of Vienna}
\email{xin.zhang@univie.ac.at}
\date{\today}
\keywords{Large population games, graphon mean field games, propagation of chaos, FBSDE, convergence of Nash equilibrium.}
\subjclass[2020]{49N80, 60F25, 91A06, 91A15.}
\begin{document}
\maketitle

\begin{abstract}
In this paper, we study graphon mean field games using a system of forward-backward stochastic differential equations. We establish the existence and uniqueness of solutions under two different assumptions and prove the stability with respect to the interacting graphons  which are necessary to show propagation of chaos results. As an application of propagation of chaos, we prove the convergence of $n$-player game Nash equilibrium for a general model, which is new in the theory of graphon mean field games. 
\end{abstract}

\begin{singlespace}
 \renewcommand{\contentsname}{\centering Contents}
{\footnotesize \tableofcontents}
\end{singlespace}

\section{Introduction}
The theory of mean field games was pioneered  independently by Lasry, Lions (see  \cite{MR2269875}, \cite{LASRY2006679}, \cite{MR2295621}) and Caines, Huang, Malham\'{e} (see  \cite{4303232}, \cite{MR2346927}). It is the study of strategic decision making by small interacting agents in large populations, and we refer the readers to \cite{BAYRAKTAR202198,2020arXiv201204845B,MR3860894,MR4082352,MR4013871} for  finite state mean field games, to \cite{MR4127851,MR4083905,MR3981375,MR4046528} for uniqueness of mean field game solutions, and to  \cite{MR3752669,MR3753660} for a nice survey.
Since then, the convergence of $n$-player game Nash equilibrium to the solution of mean field game has attracted lots of attention. There are three different ways to tackle this problem: 1) establish the regularity of solutions of the so-called Master equations, see \cite{MR3967062}; 2) use compactness arguments to show the existence of weak limits of $n$-player game Nash equilibrium, and prove any weak limit is a weak solution of mean field game solution, see \cite{MR3520014,MR4133381}; 3) prove it using propagation of chaos results for forward-backward stochastic differential equations or backward stochastic differential equations, see respectively \cite{2020arXiv200408351L,2021arXiv210500484P}.

In this paper, we investigate an analogous $n$-player game convergence problem for graphon mean field games. The standard mean field game theory assumes that interaction of different agents is symmetric. Recently, asymmetric graph connections among agents have been considered; see e.g. \cite{2021arXiv210512320A,2020arXiv200912144C,CainesPeterE2021GMFG,CarmonaCooneyGravesLauriere-2019-StoGraphonGamesStatic}. The heterogeneous interaction of players is modeled by graphons, which is a natural notion for the limit of a sequence of dense graphs. It was first introduced by Lov\'{a}sz et al.;  see e.g. \cite{MR2455626,borgs2012convergent,lovasz2006limits}, and have been used to characterize heterogeneously interacting particle systems; see e.g. \cite{2020arXiv200313180B,2020arXiv200810173B}, and also has important applications in $k$-core theory; see e.g. \cite{2020arXiv201209730B,MR2376426}. In graphon mean field games, given a graphon $G:[0,1] \times [0,1] \to \R_+$, each $\lambda \in [0,1]$ stands for a type of large subpopulation, and the correlation  between two subpopulations $\lambda$ and $\kappa$ is characterized by $G(\lambda, \kappa)$. As far as we know, the convergence problem for graphon mean field game has only been solved by \cite{2021arXiv210512320A} for a linear-quadratic model, where their argument heavily relied on the existence of explicit solutions. In this work, we study the limiting system which is a family of forward-backward stochastic differential equations (FBSDEs), and prove the $n$-player game convergence for a general model using propagation of chaos.

More specially, we investigate a coupled system of $n$ FBSDEs of the form
\begin{align}\label{eq:intro1}
\begin{cases}
	dX_t^{i,n}  =  \frac{1}{n} \sum_{j=1}^n G_n(\frac{i}{n},\frac{j}{n}) \h{B}(t,X_t^{i,n},X_t^{j,n},Y_t^{i,n})\,dt + \sigma\,dW_t^{i/n} \\
	dY_t^{i,n}  =  -\frac{1}{n} \sum_{j=1}^n G_n(\frac{i}{n},\frac{j}{n}) \h{F}(t,X_t^{i,n},X_t^{j,n},Y_t^{i,n})\,dt + \sum_{j=1}^n Z_t^{i,j,n}\,dW_t^{j/n}, \\
	X_0^{i,n}=\xi^{i/n}, \\
	Y_T^{i,n}  =  \frac{1}{n} \sum_{j=1}^n G_n(\frac{i}{n},\frac{j}{n}) \hat{Q}(X_T^{i,n},X_T^{j,n}),  \quad \quad i=1, \dotso, n, 
\end{cases}
\end{align}
where $\hat{B}, \hat{F}, \hat{Q}$ are drifts and terminal, $(\xi^{\ld})_{\ld \in [0,1]}$, $(W^{\ld})_{\ld \in [0,1]}$ are independent initial positions and Brownian motions respectively, and $G_n$ is a sequence of grahpons characterizing interactions between $(X^{i,n})_{i=1,\dotso,n}$. As $n \to \infty$ and $G_n \to G$, we show the propagation of chaos result that the above particle system converges to the following graphon interacting particle system 
\begin{align}\label{eq:intro2}
\begin{cases}
dX^{\lambda}_t = \int_0^1 \int_\R G(\lambda,\kappa) \h{B}\left(t,X^{\lambda}_t,x,Y^{\ld}_t \right) \cL(X_t^\kappa)(dx) \,d\kappa \, dt + \sigma \, d W^{\lambda}_t, \\
dY^{\lambda}_t = -\int_0^1 \int_\R G(\lambda,\kappa) \h{F}\left(t,X^{\lambda}_t,x,Y^{\ld}_t \right) \cL(X_t^\kappa)(dx) \,d\kappa \, dt +Z^{\lambda}_t \, d W^{\lambda}_t, \\
 X^{\lambda}_0 =\xi^{\lambda}, \\
 Y^{\lambda}_T =  \int_0^1 \int_{\R} G(\lambda,\kappa) \hat{Q}(X_T^{\lambda},x)\,\cL(X_T^{\kappa})(dx)\,d\kappa, \quad \quad \ld \in [0,1],
\end{cases}
\end{align}
where $\cL(X^{\k}_t)$ denotes the law of $X^{\k}_t$. By the stochastic maximum principle, the solution of the graphon mean field game can be characterized using \eqref{eq:intro2}.  One can conclude the convergence of $n$ player game using the convergence  $\eqref{eq:intro1} \Rightarrow \eqref{eq:intro2}$.

To carry out our analysis, first we study the existence and uniqueness of the limiting system \eqref{eq:intro2}. To properly define the interaction term $$ \int_0^1 \int_\R G(\lambda,\kappa) \h{B}\left(t,X^{\lambda}_t,x,Y^{\ld}_t \right) \cL(X_t^\kappa)(dx) \,d\kappa \, dt ,$$ we must show the measurability of $\ld \to \cL(X^{\ld}_t)$. Since we are only interested in the marginals $\cL(X^{\ld}_t)_{\ld \in [0,1]}$ instead of the joint law $\cL(X_t)$, one can study another FBSDE system where all the $(X^{\ld}, Y^{\ld})$ are driven by the same Brownian motion. The marginal laws of these two systems are the same which is a useful observation in establishing measurability.  Then under two commonly used monotonicity assumptions as in \cite{2021arXiv210209619B}, we show the existence and uniqueness of solutions to \eqref{eq:intro2}. Next we study the stability of solutions with respect to the interacting graphon $G$. As a natural notion of distance between two different graphons, the cut norm is widely used and is weaker than the $L^1$-norm; see e.g. \cite{lovasz2006limits}. To make an estimation involving the cut norm, we adopt the argument of \cite[Theorem 2.1]{2020arXiv200313180B} where the boundedness of solutions in the $L^p$ norm for $p >2$ is necessary. This is the reason that we prove existence results in general $(L^p)_{p \geq 2}$ spaces. Finally, we prove the propagation of chaos for FBSDEs, where the stability is essential. Assuming that the cut norm convergence of the interacting graphons $\lVert G_n - G \rVert _{\square} \to 0$, we show that $\eqref{eq:intro1} \Rightarrow \eqref{eq:intro2}$. Under a stronger condition that $G_n$ is the uniform block sampling of $G$, we can obtain the convergence rate.

The rest of the paper is organized as follows. In Section 2, we prove the unique existence of solutions of \eqref{eq:gGMFG} under two different assumptions. In Section 3, assuming that the interaction is linear in $G$, we show the stability of solutions with respect to $G$. In Section 4, we prove the Propagation of Chaos result, and in section 5 we apply previous results to a simple model of graphon mean field game.

\subsection{Notation} Denote by $\cM( [0,T]; \cP_p(\R))$ and $\cC([0,T]; \cP_p(\R))$ the space of measurable functions from $[0,T]$ to $\cP_p(\R)$ and the space of continuous functions from $[0,T]$ to $\cP_p(\R)$ respectively. Define the space of families of probability flows as 
\begin{align*}
P F^p:=\{\bs{\mu}: [0,1] \to \cC\left([0,T];\cP_p(\R)\right): \, \lambda \mapsto \bs{\mu}^{\lambda} \text{ is measurable}\}.
\end{align*}
For any $\mu, \wt{\mu} \in \cP(\mathcal{C}([0,T];\R^2))$, let us take 
$$\W_{2,T}(\mu, \wt {\mu}):= \inf\left\{  \E\left[\sup_{t\in[0,T]} |X_t-\wt{X}_t|^2+\sup_{t\in[0,T]} |Y^{\ld}_t-\wt{Y}_t|^2\right] : \, (X,Y) \sim \mu, \, (\wt{X},\wt{Y}) \sim \wt{\mu} \right\}.$$ 
For a family of random variables $\{X^{\ld}\}_{\ld \in [0,1]}$, we denote by $\cL^m(X)$ the set of laws $\{\cL(X^{\ld})\}_{\ld \in [0,1]}$.

We define spaces of processes and random variables
\begin{itemize}
\item $L^{p,2}_{\mathcal{F}}$ to be the set of all $\{\mathcal{F}_t\}_{t \geq 0}$- progressively measurable real-valued process $(X_t)_{t \geq 0}$ such that $\E \left[ \left(\int_0^T |X_t|^2 \, dt \right)^{p/2}\right]< + \infty$. 
\item $L^{p,c}_{\mathcal{F}}$ to be the set of all $\{\mathcal{F}_t\}_{t \geq 0}$- progressively measurable real-valued continuous process $(X_t)_{t \geq 0}$ such that $\E\left[  \sup_{t \in [0,T]}  |X_t|^p \right]< + \infty$. 
\item $L^p_{\mathcal{F}_t}$ to be the set of all $\mathcal{F}_t$-measurable $p$-th integrable random variables. 
\item $\cM L^{p,2}_{\mathcal{F}}$ to be the set of all measurable functions $X$ from $[0,1]$ to $L^{p,2}_{\mathcal{F}}$ such that $\max_{\ld \in [0,1]} \E \left[ \left( \int_0^T |X^{\ld}_t|^2 \, dt \right)^{p/2} \right]< + \infty$ , and similarly define $\cM L^{p,c}_{\mathcal{F}}$, $\cM L^p_{\mathcal{F}_t}$.

\end{itemize}

For any $x \in \cM L^{p,c}_{\mathcal{F}}$, we define norms
\begin{align*}
\lVert x \rVert^{I,p}_{k} :=&\E\left[ \int_{[0,1]}    \int_0^T e^{k t} \left( x^{\ld}_t \right)^p \, dt  \, d \ld \right], \\ 
\lVert x \rVert^p_{k} :=& \max_{\ld \in [0,1]}\E\left[  \int_0^T e^{k t} \left( x^{\ld}_t \right)^p \, dt \right], \\ 
\lVert x \rVert^p_S:=& \max_{\ld \in [0,1]} \sup_{t \in [0,T]} \E\left[  |x^{\ld}_t|^p \right]. 
\end{align*}
For any $x \in L^{p,c}_{\mathcal{F}}$, define its norm $$\wt{\lVert x \rVert}^p_S:= \sup_{t \in [0,T]} \E[ |x_t|^p].$$

\section{Unique existence of solutions}

In this section, we show the existence and uniqueness of solutions to a general FBSDE system \eqref{eq:gGMFG} under two assumptions. 
\begin{align}\label{eq:gGMFG}
\begin{cases}
dX^{\lambda}_t = {B}_G^{\lambda}\left(t,X^{\lambda}_t,\cL^m(X_t),Y^{\ld}_t \right) \, dt + \sigma \, d W^{\lambda}_t, \\
dY^{\lambda}_t = -F_G^{\lambda}\left(t,X^{\lambda}_t,\cL^m(X_t), Y^{\lambda}_t\right) dt +Z^{\lambda}_t \, d W^{\lambda}_t, \\
 X^{\lambda}_0 =\xi^{\lambda},  \\
 Y^{\lambda}_T =Q_G^{\lambda}\left(X^{\lambda}_T,\cL^m(X_T)\right), \quad \forall \ld \in [0,1].
\end{cases}
\end{align}
The proof is very similar to that of McKean Vlasov FBSDE, except the verification of measurability of $\ld \mapsto \cL(X^{\ld})$. Our main observation is that different types of players ($X^{\ld})$ are only interacting through their marginal laws $\cL^m(X)$, and therefore by the weak uniqueness of FBSDE solutions, we can treat all processes $(X^{\ld}, Y^{\ld},Z^{\ld})_{\ld \in [0,1]}$ on one stochastic basis. Then we prove a stronger statement that $\ld \mapsto X^{\ld}$ is measurable in the $L^2$ sense, which implies the measurability of $\ld \mapsto \cL(X^{\ld})$.

\begin{definition}
A family of processes $(X^{\ld}, Y^{\ld}, Z^{\ld})_{ \ld \in [0,1]}$ is said to be a solution of \eqref{eq:gGMFG} if $ \ld \mapsto \cL(X^{\ld})$ is measurable and $(X^{\ld}, Y^{\ld}, Z^{\ld})$ satisfies the FBSDE system \eqref{eq:gGMFG} for each $\ld \in [0,1]$. 
\end{definition}
For simplicity of notation, we suppress $G$ when the graphon is clear from the context. We make the following two assumptions in this section.

\begin{assume}\label{assume1}
(i) $B^{\ld}$ is Lipschitz in $x$, and there exists a constant $K_1 \in \R$ such that for any $\ld \in [0,1]$, $(t,x,x',y) \in [0,T] \times \R^3$, $\bs{\eta} \in \cM([0,1]; \cP_p(\R))$  
\begin{align*}
(x-x') \cdot \left( B^{\ld}(t,x,\bs{\eta},y) -B^{\ld} (t,x',\bs{\eta},y) \right) \leq -K_1 (x-x')^2
\end{align*}
(ii) $F^{\ld}$ is Lipschitz in $y$, and there exists a constant $K_2 \in \R $ such that for any $\ld \in [0,1]$, $(t,x,y,y') \in [0,T] \times \R^3$, $\bs{\eta} \in \cM([0,1]; \cP_2(\R))$
\begin{align*}
(y-y') \cdot \left( F^{\ld}(t,x,\bs{\eta},y) -F^{\ld} (t,x,\bs{\eta},y') \right) \leq -K_2 (y-y')^2
\end{align*}
(iii) $B^{\ld}$ is $L_1$-Lipschitz in $y$, $F^{\ld}$ is $L_2$-Lipschitz in $x$, $Q^{\ld}$ is $L_3$-Lipschitz in $x$, and it holds that 
 \begin{align*}
 |B^{\ld}(t,x,\bs{\eta},y)-B^{\ld}(t,x,\wt{\bs{\eta}},y)| &\leq \frac{L_1}{2} \W_p(\bs{\eta}^{\ld}, \wt{\bs{\eta}}^{\ld})+ \frac{L_1}{2} \int_{[0,1]} \W_p(\bs{\eta}^{\ld},\wt{\bs{\eta}}^{\ld}) \, d \ld, \\
  |F^{\ld}(t,x,\bs{\eta},y)-F^{\ld}(t,x,\wt{\bs{\eta}},y)| &\leq \frac{L_2}{2} \W_p(\bs{\eta}^{\ld}, \wt{\bs{\eta}}^{\ld})+ \frac{L_2}{2} \int_{[0,1]} \W_p(\bs{\eta}^{\ld},\wt{\bs{\eta}}^{\ld}) \, d \ld, \\
   |Q^{\ld}(x,\bs{\eta})-Q^{\ld}(x,\wt{\bs{\eta}})| &\leq \frac{L_3}{2} \W_p(\bs{\eta}^{\ld}, \wt{\bs{\eta}}^{\ld})+ \frac{L_3}{2} \int_{[0,1]} \W_p(\bs{\eta}^{\ld},\wt{\bs{\eta}}^{\ld}) \, d \ld.
 \end{align*}
(iv) It holds that $pK_1 +p K_2> (2p-1)L_1+(2p-2)L_2$ and there exists a constant $k \in ((2p-2)L_2-pK_2 ,p K_1-(2p-1)L_1 )$ such that 
\begin{align}\label{eq:contraction}
& (k+pK_2-(2p-2)L_2)  > \left(2^pL_1L_3^p+\frac{2^{p-1}L_1^2L_3^p+2L_1L_2}{-k+pK_1-(2p-1)L_1} \right).\end{align}
(v)  We have that $\lambda \mapsto \mathcal{L}(\xi^\lambda)$ is  measurable, $\sup_{\ld \in [0,1]} \E[|\xi^{\ld}|^p] < +\infty$, and $(B(\cdot, 0), F(\cdot, 0), Q) \in \cM L^{p,2}_{\cF} \times \cM L^{p,2}_{\cF} \times \cM L^p_{\cF_T}$. 
\end{assume}

\begin{assume}\label{assume2}
(i) $(B^{\ld},F^{\ld})$ are $L$-Lipschitz in $(x,y)$, and it holds that 
 \begin{align*}
 |B^{\ld}(t,x,\bs{\eta},y)-B^{\ld}(t,x,\wt{\bs{\eta}},y)| &\leq l \W_2(\bs{\eta}^{\ld}, \wt{\bs{\eta}}^{\ld})+ l \int_{[0,1]} \W_2(\bs{\eta}^{\ld},\wt{\bs{\eta}}^{\ld}) \, d \ld, \\
  |F^{\ld}(t,x,\bs{\eta},y)-F^{\ld}(t,x,\wt{\bs{\eta}},y)| &\leq l \W_2(\bs{\eta}^{\ld}, \wt{\bs{\eta}}^{\ld})+ l \int_{[0,1]} \W_2(\bs{\eta}^{\ld},\wt{\bs{\eta}}^{\ld}) \, d \ld, \\
 |Q^{\ld}(x,\bs{\eta})-Q^{\ld}(x,\wt{\bs{\eta}})| &\leq l \W_2(\bs{\eta}^{\ld}, \wt{\bs{\eta}}^{\ld})+ l \int_{[0,1]} \W_2(\bs{\eta}^{\ld},\wt{\bs{\eta}}^{\ld}) \, d \ld.
 \end{align*}
(ii) There exist a positive constant $k>3l$ such that for all $ \ld \in [0,1]$,
\begin{align*}
& -\Delta x^{\ld} \left(F^{\ld}(t,\theta^{\ld})-F^{\ld}(t,\wt{\theta}^{\ld}) \right) +\Delta y^{\ld}\left( B^{\ld}(t,\theta^{\ld})-B^{\ld}(t,\wt{\theta}^{\ld})  \right) \\ 
& \leq -k   (\Delta x^{\ld})^2-k(\Delta y^{\ld})^2 , \\
&  \Delta x^{\ld}\left( Q^{\ld}(x^{\ld}, \bs{\eta})-Q^{\ld}(\wt{x}^{\ld}, \bs{\eta}) \right) \\
& \geq k (\Delta x^{\ld})^2,
\end{align*}
where $\Delta x^{\ld}:= x^{\ld}-\wt{x}^{\ld}$, $\Delta y^{\ld}:= y^{\ld}-\wt{y}^{\ld}$, $\theta^{\ld}=(x^{\ld}, \bs{\eta}, y^{\ld})$,$\wt{\theta}^{\ld}=(\wt{x}^{\ld}, \bs{\eta}, \wt{y}^{\ld})$. \\
(iii) We have that $\lambda \mapsto \mathcal{L}(\xi^\lambda)$ is  measurable, $\sup_{\ld \in [0,1]} \E[|\xi^{\ld}|^p] < +\infty$, and $(B(\cdot, 0), F(\cdot, 0), Q) \in \cM L^{p,2}_{\cF} \times \cM L^{p,2}_{\cF} \times \cM L^p_{\cF_T}$. 
\end{assume}

\begin{lemma}\label{lem:weakunique}
Under Assumption \ref{assume1} or \ref{assume2}, there is a one-to-one correspondence between solutions to \eqref{eq:gGMFG} and 
\begin{align}\label{eq:GMFG2}
\begin{cases}
dX^{\ld}_t= B_G^{\ld}\left(t,X^{\ld}_t,\cL^m(X_t), Y^{\ld}_t\right) \, dt + \sigma \, dW_t, \\
 dY^{\ld}_t= -F_G^{\ld} \left(t, X^{\ld}_t,\cL^m(X_t), Y^{\ld}_t \right) \, dt + Z^{\ld}_t \, dW_t, \\
 X^{\ld}_0= \xi^{\ld}, \\
 Y^{\ld}_T= Q_G^{\ld}\left(X^{\ld}_T,\cL^m(X_T)\right), \quad \forall \ld \in [0,1].
\end{cases}
\end{align}
\end{lemma}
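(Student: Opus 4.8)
The plan is to use the fact that the two systems differ only through their driving noise, while the coefficients of both see the solution \emph{only} through the family of time--marginals $\cL^m(X_t)=\{\cL(X^{\k}_t)\}_{\k\in[0,1]}$. Consequently, once the flow of marginals is frozen, both \eqref{eq:gGMFG} and \eqref{eq:GMFG2} decouple, for each $\ld$, into the \emph{same} standard (non--McKean--Vlasov) FBSDE, and this decoupled equation is well posed under either Assumption~\ref{assume1} or \ref{assume2}. The ``weak uniqueness'' mentioned before the statement then lets one transport the whole family $(X^{\ld},Y^{\ld},Z^{\ld})_{\ld}$ onto a single stochastic basis.

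\textbf{Step 1 (decoupling and well-posedness).} Starting from a solution $(X^{\ld},Y^{\ld},Z^{\ld})_{\ld}$ of \eqref{eq:gGMFG}, I would set $\bs{\mu}^{\ld}_t:=\cL(X^{\ld}_t)$; by the definition of a solution, $\ld\mapsto\bs{\mu}^{\ld}$ is a measurable map into $\cC([0,T];\cP_p(\R))$ (continuity and the $p$-th moment bound following from the dynamics together with Assumption~\ref{assume1}(v)/\ref{assume2}(iii)). With this flow frozen, each pair $(X^{\ld},Y^{\ld},Z^{\ld})$ solves
\begin{align}\label{eq:frozen}
\begin{cases}
dX^{\ld}_t=b^{\ld}(t,X^{\ld}_t,Y^{\ld}_t)\,dt+\sigma\,dW^{\ld}_t,\\
dY^{\ld}_t=-f^{\ld}(t,X^{\ld}_t,Y^{\ld}_t)\,dt+Z^{\ld}_t\,dW^{\ld}_t,\\
X^{\ld}_0=\xi^{\ld},\qquad Y^{\ld}_T=q^{\ld}(X^{\ld}_T),
\end{cases}
\end{align}
where $b^{\ld}(t,x,y):=B^{\ld}_G(t,x,\bs{\mu}_t,y)$, $f^{\ld}(t,x,y):=F^{\ld}_G(t,x,\bs{\mu}_t,y)$, $q^{\ld}(x):=Q^{\ld}_G(x,\bs{\mu}_T)$. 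These frozen coefficients are Lipschitz in $(x,y)$ and inherit the relevant monotonicity: under Assumption~\ref{assume2} they satisfy the Peng--Wu monotonicity condition \ref{assume2}(ii) verbatim, while under Assumption~\ref{assume1} conditions \ref{assume1}(i)--(ii), (iv) amount to a weak-coupling smallness condition. In either case \eqref{eq:frozen} is well posed by classical monotone/weakly coupled FBSDE theory; in particular it has a strong solution, a measurable functional $\Phi^{\ld}$ of $(\xi^{\ld},W^{\ld}_{[0,\cdot]})$, and uniqueness in law holds.

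\textbf{Step 2 (the correspondence).} To pass from \eqref{eq:gGMFG} to \eqref{eq:GMFG2}, I would fix one stochastic basis carrying a Brownian motion $W$ and a family $(\wt{\xi}^{\ld})_{\ld}$, independent of $W$ and measurable in $\ld$, with $\cL(\wt{\xi}^{\ld})=\cL(\xi^{\ld})$ (possible since $\ld\mapsto\cL(\xi^{\ld})$ is measurable and the state space is $\R$), and set $(\wt{X}^{\ld},\wt{Y}^{\ld},\wt{Z}^{\ld}):=\Phi^{\ld}(\wt{\xi}^{\ld},W_{[0,\cdot]})$. By uniqueness in law, $\cL(\wt{X}^{\ld}_t)=\bs{\mu}^{\ld}_t$ for all $t$, so $\cL^m(\wt{X}_t)=\bs{\mu}_t$; hence the frozen coefficients coincide with $B^{\ld}_G(t,\cdot,\cL^m(\wt{X}_t),\cdot)$, etc., and $(\wt{X}^{\ld},\wt{Y}^{\ld},\wt{Z}^{\ld})_{\ld}$ solves \eqref{eq:GMFG2}; measurability of $\ld\mapsto\cL(\wt{X}^{\ld})$ is clear since it equals $\ld\mapsto\bs{\mu}^{\ld}$, and joint measurability in $\ld$ of the triple follows from measurability of $\ld\mapsto(b^{\ld},f^{\ld},q^{\ld},\cL(\wt{\xi}^{\ld}))$ and continuity of the solution map of \eqref{eq:frozen} in its data. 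Conversely, a solution of \eqref{eq:GMFG2} is, for each $\ld$, a solution of \eqref{eq:frozen} with $\bs{\mu}=\cL^m(X)$ driven by the single $W$; taking $W^{\ld}:=W$ (or, if the driving noises in \eqref{eq:gGMFG} are required independent, re-realizing each component via $\Phi^{\ld}$ on an independent copy) produces a solution of \eqref{eq:gGMFG} with the same marginal flow. Both maps act as the identity on $\bs{\mu}=\cL^m(X)$, and since by Step~1 a solution of either system is pinned down in law by $\bs{\mu}$ together with $(\cL(\xi^{\ld}))_{\ld}$, they are mutually inverse, which is the asserted one-to-one correspondence.

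\textbf{Main obstacle.} The substantive point is Step~1: checking that the frozen coefficients genuinely inherit the structural hypotheses needed for classical FBSDE well-posedness --- immediate for Assumption~\ref{assume2}, but requiring careful bookkeeping of the constants in the contraction inequality \eqref{eq:contraction} for Assumption~\ref{assume1} --- together with the measurability-in-$\ld$ of the strong-solution functional $\Phi^{\ld}$, which is precisely what licenses putting the entire family on one stochastic basis. The rest is the routine observation that interaction through marginal laws is insensitive to the choice of driving Brownian motions.
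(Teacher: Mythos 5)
Your proposal is correct and follows essentially the same route as the paper: freeze the family of time-marginals $\bs{\mu}^{\ld}_t=\cL(X^{\ld}_t)$, observe that each $\ld$-component then solves a decoupled classical FBSDE that is pathwise well posed under either assumption (the paper cites Hu--Peng and Pardoux--Tang here), and use weak uniqueness of that decoupled equation (the paper's Lemma~\ref{lem:unique}, a Yamada--Watanabe-type statement) to transport the whole family onto a single stochastic basis while preserving the marginal flow, so that the frozen coefficients coincide with the self-consistent ones. Your Step~2 merely makes explicit the measurability-in-$\ld$ bookkeeping that the paper defers to its Lemma~\ref{lem:picard}.
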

\begin{proof}
Given a solution $(X,Y,Z)$ to \eqref{eq:gGMFG}, we plug in the law $\bs{\mu}(t)=\{ \cL(X_t^{\ld})\}_{\ld \in [0,1]}$ into the FBSDE for each $\ld \in [0,1]$
\begin{align}\label{eq:weakunique}
\begin{cases}
d\wt{X}^{\ld}_t= B^{\ld}\left(t,\wt{X}^{\ld}_t, \bs{\mu}(t), \wt{Y}^{\ld}_t\right) \, dt + \sigma \, dW_t, \\
 d\wt{Y}^{\ld}_t= -F^{\ld} \left(t, \wt{X}^{\ld}_t, \bs{\mu}(t), \wt{Y}^{\ld}_t \right) \, dt + \wt{Z}^{\ld}_t \, dW_t, \\
\wt{ X}^{\ld}_0= \wt{\xi}^{\ld}, \\
 \wt{Y}^{\ld}_T= Q^{\ld}\left(\wt{X}^{\ld}_T, \bs{\mu}(T) \right).
\end{cases}
\end{align}
It is well-known that there exists a pathwise unique solution to FBSDE \eqref{eq:weakunique} for each $\ld \in [0,1]$ under Assumptions \ref{assume1} or \ref{assume2} (see e.g. \cite{Hu:1995aa}, \cite{Pardoux:1999aa}).  Due to Lemma~\ref{lem:unique}, its law $\cL(\wt{X}^{\ld}_t)$ coincides with $\cL(X^{\ld}_t)$, and hence $\cL(\wt{X}_t)=\bs{\mu}(t)$. Therefore, the triple $(\wt{X},\wt{Y},\wt{Z})$ solves \eqref{eq:GMFG2}. The proof for the converse is the same. 
\end{proof}

Note that in \eqref{eq:GMFG2} there is only one driven Brownian motion $W$ for all $\ld \in [0,1]$, and thus one can work on one stochastic basis and prove measurability more conveniently. As a result of Lemma~\ref{lem:weakunique}, it is equivalent to solve \eqref{eq:gGMFG} and \eqref{eq:GMFG2}, and thus we study only \eqref{eq:GMFG2} in the remaining of this section.

\subsection{Contraction mapping}

We will prove there exists a unique solution to \eqref{eq:GMFG2} using contraction mapping theorem under Assumption~\ref{assume1} with a constant $p \geq 2$.

Given any measurable $y \in \cM L^{p,c}_{\mathcal{F}}$, we define $\Psi(y):=x$ as the unique solution to
\begin{align}\label{eq:step2}
\begin{cases}
dx^{\ld}_t=B^{\ld}(t, x^{\ld}_t, \cL^m(x_t), y^{\ld}_t) \, dt + \sigma \, d W_t,  \\
x^{\ld}_0=\xi^{\ld},  \quad \forall \ld \in [0,1].
\end{cases}
\end{align}
For any $x\in \cM L^{p,c}_{\mathcal{F}}$, define $\Phi(x):=y $ to be the unique solution to backward stochastic equations 
\begin{align}\label{eq:step1}
\begin{cases}
dy^{\ld}_t=-F^{\ld}(t, x^{\ld}_t, \cL^m(x_t), y^{\ld}_t) \, dt +z^{\ld}_t \, d W_t, \\
y^{\ld}_T=Q^{\ld}(x^{\ld}_T, \cL^m(x_T)),  \quad \forall \ld \in [0,1].
\end{cases}
\end{align}

\begin{lemma}\label{lem:picard}
For $y \in \cM L^{p,c}_{\mathcal{F}}$, there exists a unique solution $x$ to \eqref{eq:step2} such that $\ld \to x^{\ld}$ is measurable, and for $x \in \cM L^{p,c}_{\mathcal{F}}$, the solution $y= \Phi(x)$ to \eqref{eq:step1} is measurable with respect to $\ld$.
\end{lemma}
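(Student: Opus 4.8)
The plan is to handle the two claims separately, since they are of the same nature: existence/uniqueness of the solution to a (forward, resp. backward) SDE driven by a single Brownian motion $W$, together with measurability of the map $\ld \mapsto x^\ld$ (resp. $\ld \mapsto y^\ld$) in $\cM L^{p,c}_{\cF}$.

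First, for the forward equation \eqref{eq:step2}: fix $y \in \cM L^{p,c}_{\cF}$ and note that since $\ld \mapsto y^\ld$ is measurable and $\ld \mapsto \cL(\xi^\ld)$ is measurable (Assumption \ref{assume1}(v) or \ref{assume2}(iii)), the map $\ld \mapsto \cL^m(y^\ld)$ is well-defined; however, the coupling in \eqref{eq:step2} through $\cL^m(x_t)$ makes this a McKean--Vlasov SDE for each $\ld$, so I would first note that the $\cL^m(x_t)$ appearing in the drift is determined self-consistently. The cleaner route is: for each fixed $\ld$, under Assumption \ref{assume1}(i) (one-sided Lipschitz / monotone in $x$, Lipschitz in the measure argument via \ref{assume1}(iii)) or Assumption \ref{assume2}(i), there is a pathwise unique strong solution $x^\ld \in L^{p,c}_{\cF}$ to the McKean--Vlasov SDE, by standard fixed-point arguments on $\cC([0,T];\cP_p(\R))$ (e.g. Sznitman-type contraction after Gronwall, using $p\ge 2$ and BDG); the bound $\max_\ld \E[\sup_t |x^\ld_t|^p] < \infty$ follows from \ref{assume1}(v) / \ref{assume2}(iii) and Gronwall uniformly in $\ld$. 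For measurability of $\ld \mapsto x^\ld$, the key observation is that the solution is obtained as the a.s.\ (and $L^p$) limit of the Picard iteration $x^{\ld,(0)} \equiv \xi^\ld$, $x^{\ld,(n+1)}_t = \xi^\ld + \int_0^t B^\ld(s,x^{\ld,(n)}_s,\cL^m(x^{(n)}_s),y^\ld_s)\,ds + \sigma W_t$. Each iterate is jointly measurable in $(\ld,\omega)$ by induction — $\ld \mapsto \xi^\ld$ is measurable, $\ld \mapsto \cL^m(x^{(n)})$ is measurable by the induction hypothesis and continuity of $\mu \mapsto \W_p$, and $(\ld, x, \mu, y) \mapsto B^\ld(s,x,\mu,y)$ inherits measurability from the Lipschitz/Carathéodory structure — and the convergence is uniform-in-$\ld$ in the relevant norm by the contraction estimate, so the limit $\ld \mapsto x^\ld$ is measurable as a pointwise limit of measurable maps into the Polish space $L^{p,c}_{\cF}$. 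This gives $x \in \cM L^{p,c}_{\cF}$, hence also $\ld \mapsto \cL(x^\ld)$ measurable as a composition.

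Second, for the backward equation \eqref{eq:step1}: given $x \in \cM L^{p,c}_{\cF}$, the map $\ld \mapsto \cL^m(x_\cdot)$ is measurable (just shown / by hypothesis), so for each $\ld$ the generator $(t,y) \mapsto F^\ld(t,x^\ld_t,\cL^m(x_t),y)$ is progressively measurable and Lipschitz in $y$ (Assumption \ref{assume1}(ii)+(iii), or \ref{assume2}(i)), and the terminal condition $Q^\ld(x^\ld_T,\cL^m(x_T)) \in L^p_{\cF_T}$ with $\max_\ld \E[|Y^\ld_T|^p] < \infty$ by the Lipschitz bounds and $x \in \cM L^{p,c}_{\cF}$. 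Hence by the standard theory of BSDEs with Lipschitz drivers and $L^p$ data (e.g. \cite{Pardoux:1999aa}), there is a unique solution $(y^\ld, z^\ld) \in L^{p,c}_{\cF} \times L^{p,2}_{\cF}$. For measurability in $\ld$, again realize $y^\ld$ as the limit of the Picard scheme $y^{\ld,(0)} \equiv 0$, $y^{\ld,(n+1)}_t = \E\big[ Q^\ld(x^\ld_T,\cL^m(x_T)) + \int_t^T F^\ld(s,x^\ld_s,\cL^m(x_s),y^{\ld,(n)}_s)\,ds \,\big|\, \cF_t\big]$; joint measurability in $(\ld,\omega)$ of each iterate follows by induction (conditional expectations of jointly measurable integrands are jointly measurable, choosing a jointly measurable version via, say, the martingale representation), and the Lipschitz-in-$y$ contraction (with $e^{\beta t}$ weight) converges uniformly in $\ld$, so $\ld \mapsto y^\ld$ is a measurable pointwise limit. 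Therefore $y = \Phi(x) \in \cM L^{p,c}_{\cF}$.

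The routine parts are the well-posedness of each individual (McKean--Vlasov forward, Lipschitz backward) equation, which is classical. The genuine obstacle — and the whole point of working on a \emph{single} Brownian basis, as set up in Lemma \ref{lem:weakunique} — is the measurability of $\ld \mapsto x^\ld$ and $\ld \mapsto y^\ld$ as maps into the infinite-dimensional path spaces $L^{p,c}_{\cF}$: one must produce \emph{jointly} measurable (in $(\ld,\omega)$, and progressively measurable in $t$) versions of the Picard iterates and argue that the limit is taken in a sense that preserves measurability. The care needed is (i) choosing jointly measurable versions of the conditional expectations in the backward scheme, and (ii) verifying that $(\ld,y) \mapsto F^\ld(t,\cdot,\cdot,y)$ and $\ld \mapsto B^\ld$ are jointly measurable — this uses the Lipschitz constants in the assumptions being uniform in $\ld$ together with measurability of the data in $\ld$, so that the Carathéodory-function argument applies. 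Once joint measurability of the iterates is in hand, the uniform-in-$\ld$ contraction estimates (which are exactly the estimates underlying the later contraction-mapping argument for \eqref{eq:GMFG2}) close the proof.
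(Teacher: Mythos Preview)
Your proposal is correct and follows essentially the same Picard-iteration route as the paper. The one point worth flagging is how you handle measurability of the backward iterates: you appeal to ``jointly measurable versions of conditional expectations via martingale representation,'' which can be made to work but requires care with versions. The paper's argument is cleaner here: it observes that $L^p_{\cF_T} \ni \xi \mapsto \E[\xi \mid \cF_t] \in L^p_{\cF_t}$ is a contraction (Jensen), hence a \emph{continuous} map between Banach spaces, so measurability of $\ld \mapsto \ol{y}^\ld_t$ follows immediately by composing with the measurable map $\ld \mapsto Q^\ld(x^\ld_T,\cL^m(x_T)) - \int_t^T f^\ld(s)\,ds$ (established via Lemma~\ref{lem:measurable'}); Lemma~\ref{lem3} then upgrades pointwise-in-$t$ measurability to measurability in $L^{p,c}_\cF$. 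This sidesteps the selection of jointly measurable $(\ld,\omega)$-versions entirely and is the tidier way to close exactly the gap you yourself identified as the genuine obstacle. For the forward equation, the paper splits the iteration into two layers (first solve the frozen-measure SDE by an inner Picard, then iterate on the measure via $\Gamma^k$), whereas you collapse this into a single Picard scheme; the two are equivalent.
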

\begin{proof}
The proof is based on Picard iterations. 

\vspace{10pt}

\textit{Step 1:} For any $\bs{\mu} \in PF^p$, the following stochastic differential equations can be solved 
\begin{align}\label{step2:eq1}
\begin{cases}
dx^{\ld}_t=B^{\ld}(t, x^{\ld}_t, \bs{\mu}_t, y^{\ld}_t) \, dt + \sigma \, d W_t, \\
x^{\ld}_0=\xi^{\ld},
\end{cases}
\end{align}
via Picard iteration. Take $\wt{x} \in \cM L^{p,c}_{\mathcal{F}}$, and define 
\begin{align*}
\ol{x}^{\ld}_t=\xi^{\ld}+ \int_0^t B^{\ld}(s, \wt{x}^{\ld}_s, \bs{\mu}_t, y_t^{\ld}) \, ds + \sigma W_t.
\end{align*}
As a result of Lemma~\ref{lem:measurable'} and standard estimations of SDE,  it is clear that $\ol{x} \in \cM L^{p,c}_{\mathcal{F}}$. Also $\wt{x} \mapsto \ol{x}$ is a contraction under the norm $\lVert \cdot \rVert^p_{\a}$ with some $-\a$ large enough. Thus its fixed point $x$ solves \eqref{step2:eq1}, and $x \in \cM L^{p,c}_{\mathcal{F}}$.

 Take $\wt{x} \in \cM L^{p,c}_{\mathcal{F}}$, plug in $\bs{\mu}=\cL^m(\wt{x})$ into \eqref{step2:eq1}, and obtain its solution $x:=\Gamma(\wt{x})$. By a modification of \cite[Theorem 1.7]{MR3629171}, it can be easily shown that $\Gamma^{k}(\wt{x}) \in \cM L^{p,c}_{\mathcal{F}}$ converges, and its limit solves \eqref{eq:step2} and belongs to $\cM L^{p,c}_{\mathcal{F}}$. 

\vspace{10pt}

\textit{Step 2:} Take any $\wt{y} \in \cM L^{p,c}_{\mathcal{F}}$, denote $f^{\ld}(t):=F^{\ld}(t,x^{\ld}_t,\cL^m(x_t), \wt{y}^{\ld}_t), \, \forall \ld \in [0,1]$. We define 
\begin{align*}
\ol{y}^{\ld}_t:=\E\left[ Q^{\ld}(x^{\ld}_T,\cL^m(x_T)) - \int_t^T f^{\ld}(s) \, ds \, \Big| \,\mathcal{F}_t \right]
\end{align*}

By a modification of \cite[Theorem 2.2]{MR3629171}, it can be easily shown that $\wt{y} \mapsto \ol{y}$ is a contraction under the norm $\lVert \cdot \rVert^p_{\a}$ for some $\a$ large enough. Then the unique fixed point is actually the solution to \eqref{eq:step1}. For the measurability of $\ld \to \ol{y}^{\ld} \in L^{2,c}_{\mathcal{F}}$, due to Lemma~\ref{lem3} it suffices to show that $\ld \to \ol{y}^{\ld}_t \in L^p_{\mathcal{F}_t}$ is measurable for any $t$. By Jensen's inequality, it is readily seen that 
\begin{align}\label{conditional}
L^p_{\mathcal{F}_T} \ni \xi \mapsto \E[ \xi \, | \, \mathcal{F}_t] \in L^p_{\mathcal{F}_t}
\end{align}
is a contraction and thus is continuous. Due to Lemma~\ref{lem:measurable'}, $\ld \mapsto Q^{\ld}(x^{\ld}_T,\cL^m(x_T)) - \int_t^T f^{\ld}(s) \in L^p_{\mathcal{F}_T}$ is measurable. Therefore its composition with \eqref{conditional}, $\ld \mapsto \ol{y}_t^{\ld} \in L^p_{\mathcal{F}_t}$,  is measurable.

\end{proof}

Let us prove that $\Phi \circ \Psi$ is a contraction, and thus the unique fixed point is the unique solution to \eqref{eq:GMFG2}. The proof is the same as in \cite{Pardoux:1999aa}.

\begin{thm}\label{thm:contraction}
Under Assumption~\ref{assume1}, the composition $\Phi \circ \Psi$ is a contraction under the norm $\lVert \cdot \rVert_{k}^p$.
\end{thm}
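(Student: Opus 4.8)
The plan is to follow the classical strategy for solving FBSDEs via decoupling and contraction, exactly in the spirit of \cite{Pardoux:1999aa}, adapting the estimates to the graphon setting where the extra coupling through the family of marginal laws $\cL^m(x)$ appears. We take two processes $y, \wt{y} \in \cM L^{p,c}_{\mathcal{F}}$, set $x := \Psi(y)$, $\wt{x} := \Psi(\wt{y})$ (the forward components solving \eqref{eq:step2}), and then $u := \Phi(x) = (\Phi\circ\Psi)(y)$, $\wt{u} := \Phi(\wt{x}) = (\Phi\circ\Psi)(\wt{y})$ (the backward components solving \eqref{eq:step1}). Writing $\Delta x = x - \wt{x}$, $\Delta y = y - \wt{y}$, $\Delta u = u - \wt{u}$, the goal is to bound $\lVert \Delta u \rVert_k^p$ by a factor strictly less than one times $\lVert \Delta y \rVert_k^p$; condition \eqref{eq:contraction} in Assumption~\ref{assume1}(iv) is precisely what makes the resulting constant a genuine contraction.

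First I would estimate the forward part: apply It\^o's formula to $e^{kt}|\Delta x^{\ld}_t|^p$ (or to $|\Delta x^{\ld}_t|^2$ and then raise to power $p/2$), use the one-sided Lipschitz/monotonicity bound Assumption~\ref{assume1}(i) to absorb the $x$-dependence as $-K_1|\Delta x^{\ld}|^2$, use the $L_1$-Lipschitz dependence in $y$ and the $\W_p$-Lipschitz dependence in $\bs\eta$ from (iii), and control $\W_p(\cL(x^{\ld}_t),\cL(\wt{x}^{\ld}_t)) \le (\E|\Delta x^{\ld}_t|^p)^{1/p}$ together with its average over $\ld \in [0,1]$. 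Choosing $k \in ((2p-2)L_2 - pK_2,\, pK_1 - (2p-1)L_1)$ as permitted by (iv), this yields a bound of the form $\lVert \Delta x \rVert_k^p \le \frac{2L_1}{-k + pK_1 - (2p-1)L_1}\,\lVert \Delta y \rVert_k^p$, and likewise a pathwise/supremum bound $\lVert \Delta x \rVert_S^p$ and a terminal bound $\max_\ld \E|\Delta x^{\ld}_T|^p$. Second, the backward part: apply It\^o to $e^{kt}|\Delta u^{\ld}_t|^p$, use the monotonicity Assumption~\ref{assume1}(ii) in $y$ to get a favorable $-K_2|\Delta u^{\ld}|^2$ term, use the $L_2$-Lipschitz dependence of $F$ in $x$ and the $\W_p$-dependence in $\bs\eta$ from (iii), and the $L_3$-Lipschitz terminal condition $Q$; Gronwall/integration in $t$ from $T$ backward produces $(k + pK_2 - (2p-2)L_2)\lVert \Delta u \rVert_k^p \lesssim L_1 L_3^p\,\lVert \Delta x \rVert_S^p + (\text{terms in }\lVert \Delta x\rVert_k^p)$, matching the left- and right-hand sides of \eqref{eq:contraction}.

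The main obstacle — and the place I'd be most careful — is bookkeeping the constants so that chaining the forward estimate into the backward estimate reproduces \eqref{eq:contraction} exactly, in particular tracking the powers of $2$ coming from the elementary inequality $|a+b|^p \le 2^{p-1}(|a|^p + |b|^p)$ (and the $2^p$) when separating the ``local'' $\W_p(\bs\eta^\ld,\wt{\bs\eta}^\ld)$ term from the averaged term in (iii), and handling the stochastic integral / quadratic-variation terms in the backward SDE (the $Z^\ld$ terms) which must be dominated rather than controlled exactly — typically via a Burkholder–Davis–Gundy argument or by keeping them on the favorable side after completing the square. A secondary point is justifying that all manipulations stay within $\cM L^{p,c}_{\mathcal{F}}$ and that the $\ld$-measurability is preserved, but that is already furnished by Lemma~\ref{lem:picard}. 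Once the two chained estimates give $\lVert \Delta u \rVert_k^p \le C\,\lVert \Delta y\rVert_k^p$ with
\[
C = \frac{1}{k + pK_2 - (2p-2)L_2}\left(2^p L_1 L_3^p + \frac{2^{p-1}L_1^2 L_3^p + 2L_1 L_2}{-k + pK_1 - (2p-1)L_1}\right) < 1
\]
by \eqref{eq:contraction}, the Banach fixed point theorem applies on the complete space $(\cM L^{p,c}_{\mathcal{F}}, \lVert\cdot\rVert_k^p)$ and delivers the unique solution to \eqref{eq:GMFG2}.
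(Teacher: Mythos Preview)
Your proposal is correct and follows essentially the same route as the paper: apply It\^o's formula to $e^{kt}|\Delta x^{\ld}_t|^p$ and $e^{kt}|\Delta Y^{\ld}_t|^p$, exploit the one-sided monotonicity in Assumption~\ref{assume1}(i)--(ii) together with Young's inequality and the $\W_p$-Lipschitz bounds in (iii), then chain the forward estimate (both the $\lVert\cdot\rVert_k^p$ bound and the pointwise-in-$t$ bound $\max_\ld e^{kt}\E|\Delta x^{\ld}_t|^p$) into the backward estimate to land on the constant in \eqref{eq:contraction}. The only minor remark is that the $Z$-terms require no BDG argument here: the quadratic-variation contribution carries the favorable sign $-\tfrac{p(p-1)}{2}\int e^{ks}|\Delta Y^{\ld}_s|^{p-2}|\Delta Z^{\ld}_s|^2\,ds$ and can simply be discarded, while the stochastic integral vanishes upon taking expectation.
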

\begin{proof}
Take $y, \, \wt{y}$, $x=\Psi(y), \, \wt{x}=\Psi(\wt{y})$, and $Y=\Phi(x), \, \wt{Y}=\Phi(\wt{x})$. Denote  $\Delta y^{\ld}_t = y^{\ld}_t -\wt{y}^{\ld}_t$, $\Delta x^{\ld}_t = x^{\ld}_t -\wt{x}^{\ld}_t$, $\Delta Y^{\ld}_t = Y^{\ld}_t -\wt{Y}^{\ld}_t$. By It\^{o}'s formula, we obtain that 
\begin{align*}
e^{kt} |\Delta x^{\ld}_t|^p =& k \int_0^t e^{ks} |\Delta x^{\ld}_s|^p \,ds \\
&+ p \int_0^t e^{ks} |\Delta x^{\ld}_s|^{p-2}  \Delta x^{\ld}_s \left(B^{\ld}(t,x^{\ld}_s,\cL^m(x_s),y^{\ld}_s)-B^{\ld}(t,\wt{x}^{\ld}_s,\cL^m(\wt{x}_s),\wt{y}^{\ld}_s) \right)     \,ds.
\end{align*}
According to Assumption~\ref{assume1}, Young's inequality and property of Wasserstein metric, we get that
\begin{align*} 
& |\Delta x^{\ld}_s|^{p-2} \Delta x^{\ld}_s \left(B^{\ld}(t,x^{\ld}_s,\cL^m(x_s),y^{\ld}_s)-B^{\ld}(t,\wt{x}^{\ld}_s,\cL^m(\wt{x}_s),\wt{y}^{\ld}_s) \right)  \\
 & \leq -K_1|\Delta x^{\ld}_s|^p+ \frac{L_1}{2} \left(\frac{|\Delta x^{\ld}_s|^{p}}{q}+ \frac{\E[|\Delta x^{\ld}_s |^p] ] }{p}\right) \\
 & \quad + \frac{L_1}{2} \left(\frac{|\Delta x^{\ld}_s|^{p}}{q}+\frac{\int_{[0,1]} \E[|\Delta x^{\ld}_s |^p] ]  \, d\k}{p} \right)+L_1\left(\frac{|\Delta x^{\ld}_s|^{p}}{q}+\frac{|\Delta y^{\ld}_s|^{p}}{p}\right),
\end{align*}
where $q$ is the conjugate of $p$, i.e., $\frac{1}{q}+\frac{1}{p}=1$. 
Thus we have 
\begin{align}\label{eq:contract1}
&e^{kt} \E[ |\Delta x^{\ld}_t|^p] + (-k+pK_1-(4p-3)L_1/2)\int_0^t e^{ks} \E[|\Delta x^{\ld}_s|^p ] \, ds \notag \\
&-\frac{L_1}{2} \int_0^t e^{ks} \int_{[0,1]} \E[ |\Delta x^{\k}_s|^p] \, d \k \, ds  \leq L_1 \int_0^t \E[|\Delta y^{\ld}_s|^p ] \, ds,
\end{align}
and hence 
\begin{align}\label{eq:contract5}
  \max_{\ld \in [0,1]} \int_0^t e^{k s}  \E [|\Delta x^{\ld}_s|^p ] \, ds   \leq \frac{L_1}{-k+pK_1-(2p-1)L_1} \max_{\ld \in [0,1]} \int_0^t e^{k s}   \E[ |\Delta y^{\ld}_s|^p] \, ds.
\end{align}
As a result of $k <pK_1-(2p-1)L_1$, one can deduce from \eqref{eq:contract1} that 
\begin{align}\label{eq:contract3}
& (-k+pK_1-(2p-1)L_1)  \int_0^t e^{k s} \int_{[0,1]} \E [|\Delta x^{\k}_s|^p ] \, d\k \, ds \notag \\
& \leq L_1 \int_0^t e^{k s} \int_{[0,1]}   \E[ |\Delta y^{\k}_s|^p] \, d \k \, ds \leq \max_{\k \in [0,1]} L_1 \int_0^t e^{ks} \E [|\Delta y^{\k}_s|^p] \, ds,
\end{align}
and also 
\begin{align}\label{eq:contract4}
e^{k t}  \E[|\Delta x^{\ld}_t|^p] \leq& L_1 \int_0^t e^{k s}   \E[ |\Delta y^{\ld}_s|^p] \, ds +L_1/2  \int_0^t e^{k s} \int_{[0,1]} \E [|\Delta x^{\k}_s|^p ] \, d\k \, ds. 
\end{align}
Taking maximum over $\ld$ in \eqref{eq:contract4} and using the inequality \eqref{eq:contract3}, we obtain that 
\begin{align}\label{eq:contract6}
& \max_{\ld \in [0,1]} e^{k t}  \E[|x^{\ld}_t -\wt{x}^{\ld}_t|^2] \notag \\
& \leq \left(L_1+\frac{L_1^2}{2(-k+pK_1-(2p-1)L_1)} \right) \max_{\ld \in [0,1]} \int_0^t e^{ks} \E [|y^{\ld}_s -\wt{y}^{\ld}_s|^2] \, ds.
\end{align}

For BSDEs, it can be easily seen that 
\begin{align*}
&e^{kt}|\Delta Y^{\ld}_t|^p = e^{kT}|Q^{\ld}(x^{\ld}_T,\cL^m(x_T))-Q^{\ld}(\wt{x}^{\ld}_T, \cL^m(\wt{x}_T)) |^p \\
&-k\int_t^T e^{ks} |\Delta Y^{\ld}_s| \, ds -p(p-1)/2 \int_t^T e^{ks} |\Delta Y^{\ld}_s|^{p-2}|\Delta Z^{\ld}_s|^2 \, ds  \\
&+p \int_t^T e^{ks} |\Delta Y_s^{\ld}|^{p-2}\Delta Y_s^{\ld} \left(F^{\ld}(s, x^{\ld}_s, \cL^m(x_s), Y^{\ld}_s)-F^{\ld}(s, \wt{x}^{\ld}_s, \cL^m(\wt{x}_s), \wt{Y}^{\ld}_s) \right) ds \\
&-p \int_t^T e^{ks} |\Delta Y_s^{\ld}|^{p-2}\Delta Y_s^{\ld} \Delta Z^{\ld}_s \, dW_s.
\end{align*}
Using Assumption~\ref{assume1}, Young's inequality and properties of Wasserstein metric, we get that 
\begin{align*}
 &\E\left[ |\Delta Y_s^{\ld}|^{p-2}\Delta Y_s^{\ld} \left(F^{\ld}(s, x^{\ld}_s, \cL^m(x_s), Y^{\ld}_s)-F^{\ld}(s, \wt{x}^{\ld}_s, \cL^m(\wt{x}_s), \wt{Y}^{\ld}_s) \right) \right]\\
 & \leq -K_2 \E[|\Delta Y^{\ld}_s|^p ]+ \frac{L_2}{2} \left(\frac{\E[|\Delta Y^{\ld}_s|^{p}]}{q}+ \frac{\E[|\Delta x^{\ld}_s |^p] ] }{p}\right) \\
 & \quad + \frac{L_2}{2} \left(\frac{\E[|\Delta Y^{\ld}_s|^{p}]}{q}+\frac{\int_{[0,1]} \E[|\Delta x^{\ld}_s |^p] ]  \, d\k}{p} \right)+L_2\left(\frac{\E[|\Delta Y^{\ld}_s|^{p}]}{q}+\frac{\E[|\Delta x^{\ld}_s|^{p}]}{p}\right).
\end{align*}
Therefore, one can obtain 
\begin{align*}
& e^{kt} \E [ |\Delta Y^{\ld}_t|^p]+ \left(k+pK_2 -(2p-2)L_2 \right) \int_t^T e^{ks} \E [ |\Delta Y^{\ld}_s|^p] \, ds  \\
& \leq (2^{p-2}+2^{p-1})L_3^pe^{kT} \E[|\Delta x_T^{\ld}|^p]+2^{p-2}L_3^p e^{kT} \int_{[0,1]} \E[|\Delta x_T^{\k}|^p ]\,d\k \\
&+3L_2/2 \int_t^T e^{ks} \E[ |\Delta x^{\ld}_s|^p ] \, ds +L_2 /2 \int_t^T e^{k s}  \int_{ [0,1]} E[ |\Delta x^{\k}_s|^p] \, d \k \, ds
\end{align*}
Plugging in \eqref{eq:contract5} and \eqref{eq:contract6}, it can be readily seen that 
\begin{align*}
& (k+pK_2-(2p-2)L_2) \lVert Y- \wt{Y} \rVert^p_{k}  \leq \left(2^pL_1L_3^p+\frac{2^{p-1}L_1^2L_3^p+2L_1L_2}{-k+pK_1-(2p-1)L_1} \right) \lVert y -\wt{y} \rVert^p_{k},
\end{align*}
and $\Phi \circ \Psi$ a contraction due to \eqref{eq:contraction}.

\end{proof}

\subsection{Method of Continuation} 
We consider the following family of FBSDEs parametrized by $\zeta \in [0,1]$,
\begin{align}\label{eq:conti}
\begin{cases}
dX^{\zeta,\ld}_t= \left(\zeta B^{\ld}(t,X^{\zeta,\ld}_t,\cL^m(X^{\zeta}_t),Y^{\zeta,\ld}_t)-(1-\zeta) Y^{\zeta, \ld}_t+B^{\ld}_0(t) \right) dt+ \sigma \, d W_t \\
dY^{\zeta,\ld}_t= -\left(\zeta Y^{\ld}(t,X^{\zeta,\ld}_t,\cL^m(X^{\zeta}_t),Y^{\zeta,\ld}_t)+(1-\zeta)X^{\zeta, \ld}_t+F^{\ld}_0(t) \right) dt+ Z^{\zeta,\ld}_t \, d W_t, \\
X^{\zeta,\ld}_0=\xi^{\ld}, \\
Y^{\zeta, \ld}_T=\zeta Q^{\ld}(X^{\zeta,\ld}_T, \cL^m(X^{\zeta,\ld}_T))+(1-\zeta) X^{\zeta,\ld}_t+Q^{\ld}_0,
\end{cases}
\end{align}
where $B_0, F_0, Q_0 \in \cM L^{p,2}_{\cF} \times \cM L^{p,2}_{\cF} \times \cM L^p_{\mathcal{F}_T}$. In the case that $\zeta=1, B_0^{\ld}=F_0^{\ld}=Q^{\ld}_0=0$, \eqref{eq:conti} reduces to \eqref{eq:gGMFG}. In the case that $\zeta=0$, \eqref{eq:conti} becomes 
\begin{align}\label{eq:zeta0}
\begin{cases}
dX^{\ld}_t=\left( -Y^{\ld}_t+B_0^{\ld}(t) \right) dt +\sigma \, dW_t, \\
dY^{\ld}_t=-\left( X^{\ld}_t + F_0^{\ld}(t) \right) dt + Z^{0,\ld}_t \, dW_t, \\
X^{\ld}_0=\xi^{\ld}, \\
Y^{\ld}_T=X^{\ld}_T +Q^{\ld}_0. 
\end{cases}
\end{align}

For any $\Theta=(X,Y,Z) \in \cM^p[0,T]:=\cM L_{\cF}^{p,c} \times \cM L_{\cF}^{p,c} \times \cM L^{p,2}_{\cF}$, we define its norm 
\begin{align*}
\lVert \Theta \rVert_{\cM^p[0,T]}^p:= \max_{\ld \in [0,1]} \E \left[\sup_{t \in [0,T]}|X^{\ld}_t|^p+\sup_{t \in [0,T]} |Y^{\ld}_t|^p+\left(\int_0^T |Z^{\ld}_t|^2 \, dt  \right)^{p/2} \right].
\end{align*}
The following lemma provides a sufficient condition for the $L^p$ boundedness of the limit  $\lim_{k \to \infty} \Theta^k$. 
\begin{lemma}\label{lem:momentbound}
Suppose $p \geq 2$ and the sequence $(\Theta^k)_{k \geq 0} \in \cM^p[0,T]$ satisfies 
\begin{align*}
&\lVert \Theta^k \rVert_{\cM^p[0,T]} \leq K, \quad k \geq 0, \\
&\lim\limits_{k \to \infty} \lVert \Theta -\Theta^k \rVert_{\cM^2[0,T]} =0.
\end{align*}
Then $\Theta \in \cM^p[0,T]$. 
\end{lemma}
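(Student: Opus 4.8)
The plan is to upgrade the uniform $L^p$-bound on the approximants $(\Theta^k)$ to an $L^p$-bound on the limit $\Theta=(X,Y,Z)$ by combining almost-sure convergence along a subsequence with Fatou's lemma. First I would fix $\lambda\in[0,1]$. The hypothesis $\lVert \Theta-\Theta^k\rVert_{\cM^2[0,T]}\to 0$ gives, for this fixed $\lambda$,
$$\E\big[\textstyle\sup_{t\in[0,T]}|X^\lambda_t-X^{k,\lambda}_t|^2\big]\to 0,\quad \E\big[\textstyle\sup_{t\in[0,T]}|Y^\lambda_t-Y^{k,\lambda}_t|^2\big]\to 0,\quad \E\big[\textstyle\int_0^T|Z^\lambda_t-Z^{k,\lambda}_t|^2\,dt\big]\to 0.$$
Hence one can extract a subsequence $(k_j)_j$, a priori depending on $\lambda$, along which the three quantities inside the expectations converge to $0$ almost surely.

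Along this subsequence I would use the reverse triangle inequality: $\big|\sup_t|X^{k_j,\lambda}_t|-\sup_t|X^\lambda_t|\big|\le \sup_t|X^{k_j,\lambda}_t-X^\lambda_t|\to 0$ a.s., and likewise for $Y$; for the $Z$-component the reverse triangle inequality in $L^2([0,T])$ gives $\big(\int_0^T|Z^{k_j,\lambda}_t|^2\,dt\big)^{1/2}\to\big(\int_0^T|Z^\lambda_t|^2\,dt\big)^{1/2}$ a.s. Raising these to the power $p$ yields $\sup_t|X^{k_j,\lambda}_t|^p\to\sup_t|X^\lambda_t|^p$, $\sup_t|Y^{k_j,\lambda}_t|^p\to\sup_t|Y^\lambda_t|^p$ and $\big(\int_0^T|Z^{k_j,\lambda}_t|^2\,dt\big)^{p/2}\to\big(\int_0^T|Z^\lambda_t|^2\,dt\big)^{p/2}$, all almost surely.

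Then I would apply Fatou's lemma to each of these nonnegative sequences, using that $\lVert\Theta^k\rVert_{\cM^p[0,T]}\le K$ forces, for every $k$ and every $\lambda$, the individual bounds $\E[\sup_t|X^{k,\lambda}_t|^p]\le K^p$, $\E[\sup_t|Y^{k,\lambda}_t|^p]\le K^p$, $\E[(\int_0^T|Z^{k,\lambda}_t|^2\,dt)^{p/2}]\le K^p$. This gives $\E[\sup_t|X^\lambda_t|^p]\le K^p$, $\E[\sup_t|Y^\lambda_t|^p]\le K^p$, $\E[(\int_0^T|Z^\lambda_t|^2\,dt)^{p/2}]\le K^p$ for the fixed $\lambda$. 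Since $\lambda\in[0,1]$ was arbitrary and the bound $K$ is uniform in $\lambda$, summing the three estimates and taking the supremum over $\lambda$ yields $\lVert\Theta\rVert_{\cM^p[0,T]}^p\le 3K^p<\infty$. The progressive measurability, the path continuity of $X$ and $Y$, and the measurability of $\lambda\mapsto\Theta^\lambda$ are all inherited from the $\cM^2[0,T]$-convergence, which already places $\Theta$ in $\cM^2[0,T]$; hence $\Theta\in\cM^p[0,T]$.

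I do not expect a genuine obstacle here: the statement is a Fatou-type closure result. The only points that deserve a line of care are that the almost-surely convergent subsequence is chosen separately for each $\lambda$ (harmless, because the bound $K$ does not depend on $\lambda$, so the resulting pointwise-in-$\lambda$ estimates combine to a supremum bound), and that for the $Z$-component one must first pass to the $L^2([0,T])$-norm before raising to the power $p$ and invoking Fatou, rather than attempting a pointwise-in-$t$ argument.
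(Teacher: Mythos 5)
Your argument is correct: the paper does not prove this lemma itself but simply cites \cite[Lemma 4.1]{MR2551515}, and your lower-semicontinuity/Fatou argument (a.s.\ convergence along a $\lambda$-dependent subsequence, reverse triangle inequality for the sup- and $L^2([0,T])$-norms, then Fatou with the uniform bound $K$) is exactly the standard proof of this closure property. The only point worth one extra line is the measurability of $\lambda\mapsto\Theta^\lambda$ with respect to the $L^p$-topologies: it follows from the already-assumed measurability into the $L^2$-spaces because the continuous injection of one separable Banach space into another is a Borel isomorphism onto its image, so no new argument is needed there either.
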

\begin{proof}
See \cite[Lemma 4.1]{MR2551515}.
\end{proof}

The following lemma establishes the existence result of \eqref{eq:zeta0}. After that, we will present the main proposition of this subsection. 
\begin{lemma}\label{lem:initialize }
The FBSDE system \eqref{eq:zeta0} has a unique solution $\Theta=(X,Y,Z)$ in $\cM^p[0,T]$ and $\ld \mapsto \cL(X^{0,\ld}_t)$ is measurable for any $ t \in [0,T]$. Furthermore, we have the bound 
\begin{align}\label{eq:pthmoment}
\lVert \Theta \rVert^p_{\cM^p[0,T]} \leq K \max_{\ld \in [0,1]} \E\left[|\xi^{\ld}|^p+|Q_0^{\ld}|^p +\left(\int_0^T |B^{\ld}_0(t)|^2 \, dt  \right)^{p/2} +\left(\int_0^T |F^{\ld}_0(t)|^2 \, dt  \right)^{p/2} \right].
\end{align}
\end{lemma}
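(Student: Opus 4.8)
The plan is to solve the linear decoupled system \eqref{eq:zeta0} by the standard decoupling-field (or "four-step"-type) argument, exploiting that for $\zeta = 0$ the interaction enters only through $\cL^m(X)$ linearly and, crucially, \emph{only through $X_T$ in the terminal condition via the identity map}, so the mean-field coupling is actually trivial here. More precisely, I would first observe that $B_0^\ld, F_0^\ld, Q_0^\ld$ do not depend on the law, and the coefficients $-Y$, $-X$, $+X_T$ are linear with the "good sign" structure (this is exactly the $k$-monotone setup of Assumption~\ref{assume2} with $F = -Y$, $B = -Y$-type terms, whose monotonicity is immediate). Hence for each fixed $\ld$, \eqref{eq:zeta0} is a linear FBSDE on one stochastic basis driven by $W$, and its unique solvability in $L^{p,c}_\cF \times L^{p,c}_\cF \times L^{p,2}_\cF$ is classical: I would cite the linear FBSDE theory (e.g.\ \cite{MR2551515}, or Yong's linear FBSDE results / the continuation method applied to this single linear system) to get a pathwise unique solution $(X^{0,\ld}, Y^{0,\ld}, Z^{0,\ld})$ for every $\ld$, together with the a priori estimate \eqref{eq:pthmoment} with a constant $K$ depending only on $T, \sigma$ and the Lipschitz/monotonicity constants (all uniform in $\ld$).

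Next I would establish the measurability $\ld \mapsto X^{0,\ld}$ (and hence $\ld \mapsto \cL(X^{0,\ld}_t)$). The argument mirrors Lemma~\ref{lem:picard}: build the solution by Picard iteration for the decoupled linear system. Start from $X^{0,\ld,(0)} \equiv \xi^\ld$; given $X^{0,\ld,(m)}$, solve the BSDE for $(Y^{0,\ld,(m)}, Z^{0,\ld,(m)})$ with driver $-(X^{0,\ld,(m)}_t + F_0^\ld(t))$ and terminal $X^{0,\ld,(m)}_T + Q_0^\ld$, then solve the (here, explicit) SDE for $X^{0,\ld,(m+1)}$ with drift $-Y^{0,\ld,(m)}_t + B_0^\ld(t)$. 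By Lemma~\ref{lem:measurable'} and Lemma~\ref{lem3} each step preserves measurability in $\ld$ in the relevant $L^p$ sense, exactly as invoked in the proof of Lemma~\ref{lem:picard} (the conditional-expectation map \eqref{conditional} is continuous, the SDE solution map is continuous in the data). On a short enough time horizon this iteration is a contraction uniformly in $\ld$ under $\lVert \cdot \rVert^p_\a$ for suitable $\a$, so the limit is measurable in $\ld$; then patch finitely many subintervals together to cover $[0,T]$, using that the intermediate data $X^{0,\ld}_{t_j}$ are already measurable in $\ld$. This yields measurability of $\ld \mapsto X^{0,\ld}$, which is stronger than, and implies, measurability of $\ld \mapsto \cL(X^{0,\ld}_t)$ for each $t$.

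For the moment bound \eqref{eq:pthmoment}: apply It\^o's formula to $e^{\beta t}|X^{0,\ld}_t|^p$ and $e^{\beta t}|Y^{0,\ld}_t|^p$ as in the proof of Theorem~\ref{thm:contraction}, use the monotonicity of the pair $(-Y, -X)$ and of the terminal map $x \mapsto x$ to absorb the cross terms with the "good sign", apply Young's inequality to the inhomogeneous terms $B_0, F_0, Q_0$, and close the estimate with Gronwall; the $Z$-estimate follows from the BSDE after a Burkholder--Davis--Gundy argument on the stochastic integral, as in \cite[Lemma 4.1]{MR2551515}. Since there is no law dependence here, no fixed-point over measures is needed and the constant $K$ is clean and $\ld$-uniform; taking $\max_{\ld}$ at the end produces \eqref{eq:pthmoment}. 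The $L^p$ (rather than $L^2$) regularity of the limit, if one prefers to get it by iteration rather than directly, is guaranteed by Lemma~\ref{lem:momentbound}.

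The main obstacle, and the only place requiring care, is the measurability-preservation through the Picard iteration combined with the time-patching: one must make sure that when restarting on $[t_j, t_{j+1}]$ the "initial" data inherited from the previous block is measurable in $\ld$ in the right ($L^p_{\cF_{t_j}}$) sense so that Lemma~\ref{lem:measurable'} and Lemma~\ref{lem3} apply again. Everything else (existence, uniqueness, the a priori bound) is the classical linear FBSDE theory applied $\ld$-by-$\ld$ with constants that happen to be uniform in $\ld$ by Assumption~\ref{assume2}(iii) (or \ref{assume1}(v)).
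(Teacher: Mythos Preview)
Your overall strategy (solve $\ld$-by-$\ld$, then argue measurability, then do the $L^p$ estimate) is correct, but you miss the key algebraic simplification the paper exploits, and this makes your route both longer and slightly shaky in one place.

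The paper does \emph{not} run a Picard iteration on the coupled linear FBSDE. Instead it introduces $P^{\ld}_t := Y^{\ld}_t - X^{\ld}_t$ and observes that $P^{\ld}$ solves the \emph{decoupled} linear BSDE
\[
dP^{\ld}_t = -\bigl(-P^{\ld}_t + B^{\ld}_0(t) + F^{\ld}_0(t)\bigr)\,dt + Z^{\ld}_t\,dW_t,\qquad P^{\ld}_T = Q^{\ld}_0,
\]
which is solved explicitly (\cite[Proposition~4.1.2]{MR3699487}), and then $X^{\ld}$ solves the linear SDE $dX^{\ld}_t = (-X^{\ld}_t - P^{\ld}_t + B^{\ld}_0(t))\,dt + \sigma\,dW_t$. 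The bound \eqref{eq:pthmoment} follows directly from these explicit representations, Burkholder--Davis--Gundy, and Gr\"{o}nwall, with no Picard step and no time subdivision. Measurability is then just a one-line reference to Lemma~\ref{lem:picard}, since each of the two decoupled equations is of the type handled there.

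Your approach would also work in principle, but the step you yourself flag as ``the main obstacle'' is genuinely delicate and not resolved by what you wrote. Restarting a \emph{forward--backward} system on $[t_j,t_{j+1}]$ ``using that the intermediate data $X^{0,\ld}_{t_j}$ are already measurable in $\ld$'' is not enough: the backward component needs terminal data at $t_{j+1}$, which you do not have until the full problem is solved. Patching FBSDEs across subintervals requires the decoupling field $Y_t = u(t,X_t)$ (here an affine $u$ given by a Riccati ODE), not just the forward data. So your proposal either needs that extra ingredient spelled out, or---much more simply---the paper's $P = Y - X$ substitution, which removes the coupling and hence the need for any patching at all.
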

\begin{proof}
The proof of measurability is the same as Lemma~\ref{lem:picard}. Let us prove that there exits a unique solution $\Theta \in \cM^p[0,T]$. 

We can solve the system $\ld$ by $\ld$. For each $\ld \in [0,1]$, consider the BSDE 
\begin{align}\label{eq:linearBSDE}
dP^{\ld}_t&= -(-P^{\ld}_t+B^{\ld}_0(t)+F^{\ld}_0(t))\,dt + Z^{\ld}_t \, dW_t, \notag \\
P^{\ld}_T&=Q^{\ld}_0.
\end{align}
It is linear BSDE, and by \cite[Proposition 4.1.2]{MR3699487} we know that 
\begin{align*}
P^{\ld}_t= e^{-t} \E \left[ e^{-T} Q^{\ld}_0+ \int_t^T e^{-s} (B_0^{\ld}(s)+F_0^{\ld}(s)) \, ds \,  \Big| \,\cF_t\right].
\end{align*}
Therefore we get $$\E\left[ \int_0|P^{\ld}_t|^p \, dt\right] \leq K\E\left[|Q_0^{\ld}|^p +\left(\int_0^T |B^{\ld}_0(t)|^2 \, dt  \right)^{p/2} +\left(\int_0^T |F^{\ld}_0(t)|^2 \, dt  \right)^{p/2} \right].$$ Also due to \cite[Proposition 3.26]{MR1121940}, as the $p/2$-th power of quadratic variation of the martingale $$t \mapsto \E\left[ Q_0^{\ld}+\int_0^T (-P^{\ld}_s+B_0^{\ld}(s)+F_0^{\ld}(s) ) \,d s \, \Big|  \, \cF_t\right],$$ we obtain that 
\begin{align*}
\E \left[\left( \int_0^T |Z^{\ld}|^2 \, dt \right)^{p/2} \right]\leq  K\E\left[|Q_0^{\ld}|^p +\left(\int_0^T |B^{\ld}_0(t)|^2 \, dt  \right)^{p/2} +\left(\int_0^T |F^{\ld}_0(t)|^2 \, dt  \right)^{p/2} \right] .
\end{align*}
Applying BDG inequality and Gr\"{o}nwall's inequality to \eqref{eq:linearBSDE}, we can easily get that 
\begin{align*}
\E \left[ \sup_{t \in [0,T]} |P^{\ld}_t|^p \right] \leq K\E\left[|Q_0^{\ld}|^p +\left(\int_0^T |B^{\ld}_0(t)|^2 \, dt  \right)^{p/2} +\left(\int_0^T |F^{\ld}_0(t)|^2 \, dt  \right)^{p/2} \right]. 
\end{align*}

Now consider the SDE 
\begin{align*}
X_t= \xi^{\ld} + \int_0^t (-X^{\ld}_s -P^{\ld}_s+ B^{\ld}_0(s)) \, ds + \sigma W_t.
\end{align*}
Then by BDG inequality and Gr\"{o}nwall's inequality, one can easily see that 
\begin{align*}
\E\left[ \sup_{t \in [0,T]} |X_t^{\ld}|^p \right] \leq K\E\left[ |\xi^{\ld}|^p+|Q_0^{\ld}|^p +\left(\int_0^T |B^{\ld}_0(t)|^2 \, dt  \right)^{p/2} +\left(\int_0^T |F^{\ld}_0(t)|^2 \, dt  \right)^{p/2} \right].
\end{align*}
Note that $\Theta=(X^{\ld}, X^{\ld}+P^{\ld},Z^{\ld}+\sigma) $ solves \eqref{eq:zeta0}, and satisfies \eqref{eq:pthmoment}. 

\end{proof}

\begin{prop}\label{prop:2.1}
Suppose there exists a $\zeta \in [0,1]$ such that for any $B_0, F_0, Q_0 \in \cM L^{p,2}_{\cF} \times \cM L^{p,2}_{\cF} \times \cM L^p_{\mathcal{F}_T}$ there exists a unique solution $\Theta$ to \eqref{eq:conti} satisfying 
\begin{align*}
\lVert \Theta^{\zeta} \rVert^p_{\cM^p[0,T]} \leq K \max_{\ld \in [0,1]} \E \Bigg[& |\xi|^p+|Q_0^{\ld}|^p+\left(\int_0^T |B^{\ld}(t, 0)|^2+ |B_0^{\ld}(t)|^2 \right)^{p/2} \\
& +\left( \int_0^T |F^{\ld}(t,0)|^2+ |F^{\ld}_0(t)|^2 \, dt \right)^{p/2} \Bigg].
\end{align*}
Then under Assumption~\ref{assume2}, there exists an $\delta_0>0$ independent of $\zeta$ such that for any $\delta \in [0,\delta_0]$, $(B_0,F_0,Q_0) \in \cM L^{p,2}_{\cF} \times \cM L^{p,2}_{\cF} \times \cM L^p_{\cF_T}$, \eqref{eq:conti} has a unique solution $\Theta^{\zeta+\delta}=(X^{\zeta+\delta},Y^{\zeta+\delta}, Z^{\zeta+\delta})$, and the following estimate holds:
\begin{align*}
\lVert \Theta^{\zeta+\rho} \rVert^p_{\cM^p[0,T]} \leq K \max_{\ld \in [0,1]} \E \Bigg[& |\xi|^p+|Q_0^{\ld}|^p+\left(\int_0^T |B^{\ld}(t, 0)|^2+ |B_0^{\ld}(t)|^2 \right)^{p/2} \\
& +\left( \int_0^T |F^{\ld}(t,0)|^2+ |F^{\ld}_0(t)|^2 \, dt \right)^{p/2} \Bigg].
\end{align*}
\end{prop}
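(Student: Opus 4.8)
The plan is the usual continuation ``bootstrap''. Fix an admissible parameter $\zeta$ and a small $\delta>0$ to be chosen. Given data $(B_0,F_0,Q_0)\in\cM L^{p,2}_{\cF}\times\cM L^{p,2}_{\cF}\times\cM L^p_{\cF_T}$, I first rewrite the system \eqref{eq:conti} at parameter $\zeta+\delta$ as the system at parameter $\zeta$ with shifted inhomogeneities: using $(\zeta+\delta)B^\ld-(1-\zeta-\delta)y=\zeta B^\ld-(1-\zeta)y+\delta(B^\ld+y)$ and the analogous identities for the $Y$--drift and the terminal datum, a triple $\Theta=(X,Y,Z)\in\cM^p[0,T]$ solves \eqref{eq:conti} at $\zeta+\delta$ if and only if it solves \eqref{eq:conti} at $\zeta$ with data
\begin{align*}
\wt B_0^\ld(t)&=B_0^\ld(t)+\delta\bigl(B^\ld(t,X_t^\ld,\cL^m(X_t),Y_t^\ld)+Y_t^\ld\bigr),\\
\wt F_0^\ld(t)&=F_0^\ld(t)+\delta\bigl(F^\ld(t,X_t^\ld,\cL^m(X_t),Y_t^\ld)-X_t^\ld\bigr),\\
\wt Q_0^\ld&=Q_0^\ld+\delta\bigl(Q^\ld(X_T^\ld,\cL^m(X_T))-X_T^\ld\bigr).
\end{align*}
This motivates the map $\mathcal{I}\colon\cM^p[0,T]\to\cM^p[0,T]$ sending $\ol\Theta=(\ol X,\ol Y,\ol Z)$ to the unique solution of \eqref{eq:conti} at $\zeta$ whose data is obtained from the formulas above with $\Theta$ replaced by $\ol\Theta$. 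By Assumption~\ref{assume2}(i) ($L$--Lipschitz in $(x,y)$, Lipschitz in the measure argument) together with $(B(\cdot,0),F(\cdot,0),Q)\in\cM L^{p,2}_{\cF}\times\cM L^{p,2}_{\cF}\times\cM L^p_{\cF_T}$ and the $\cM^p$--bound enjoyed by $\ol\Theta$, the shifted data again lie in $\cM L^{p,2}_{\cF}\times\cM L^{p,2}_{\cF}\times\cM L^p_{\cF_T}$, so $\mathcal{I}$ is well defined by the hypothesis; the measurability of $\ld\mapsto\cL(X^\ld_t)$ is propagated through the iteration exactly as in Lemma~\ref{lem:picard}.

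The crux is a stability estimate for \eqref{eq:conti} at the frozen parameter $\zeta$: if $\Theta^1,\Theta^2$ solve it with data $D^1,D^2$, then
\[
\lVert\Theta^1-\Theta^2\rVert_{\cM^p[0,T]}^p\le K\max_{\ld\in[0,1]}\E\Bigl[|\Delta Q_0^\ld|^p+\Bigl(\int_0^T|\Delta B_0^\ld(t)|^2\,dt\Bigr)^{p/2}+\Bigl(\int_0^T|\Delta F_0^\ld(t)|^2\,dt\Bigr)^{p/2}\Bigr],
\]
where $K$ depends only on $T,p$ and the Lipschitz/monotonicity constants of Assumption~\ref{assume2}, and in particular \emph{not} on $\zeta$. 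This is the same It\^o's--formula--plus--monotonicity computation underlying \eqref{eq:pthmoment}: writing the FBSDE for $\Delta\Theta=\Theta^1-\Theta^2$, the nonlinear terms enter only as increments $B^\ld(\Theta^1)-B^\ld(\Theta^2)$, etc.; splitting off the measure argument (which is handled by $\W_2(\cL(X^{1,\ld}),\cL(X^{2,\ld}))\le(\E|\Delta X^\ld|^2)^{1/2}$ and the Lipschitz-in-measure bounds), using that the convex interpolations $\zeta B^\ld-(1-\zeta)(\cdot)$, $\zeta F^\ld+(1-\zeta)(\cdot)$, $\zeta Q^\ld+(1-\zeta)(\cdot)$ inherit Assumption~\ref{assume2}(ii) with a monotonicity constant bounded below uniformly in $\zeta$, and using $k>3l$, the cross terms are absorbed and the displayed bound follows (cf.\ the arguments in \cite{MR2551515, Pardoux:1999aa}). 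Feeding $\ol\Theta^1,\ol\Theta^2\in\cM^p[0,T]$ into $\mathcal{I}$, the induced data differences, measured in the norms on the right-hand side above, are bounded by $(C\delta)^p\lVert\ol\Theta^1-\ol\Theta^2\rVert^p_{\cM^p[0,T]}$ (again by Assumption~\ref{assume2}(i)), whence $\lVert\mathcal{I}(\ol\Theta^1)-\mathcal{I}(\ol\Theta^2)\rVert^p_{\cM^p[0,T]}\le K(C\delta)^p\lVert\ol\Theta^1-\ol\Theta^2\rVert^p_{\cM^p[0,T]}$. Choosing $\delta_0$ with $K^{1/p}C\delta_0\le\tfrac12$ --- a quantity that, like $K$ and $C$, depends only on $T,p$ and the constants in Assumption~\ref{assume2} and is therefore independent of $\zeta$ --- makes $\mathcal{I}$ a $\tfrac12$--contraction on the Banach space $\cM^p[0,T]$ for every $\delta\in[0,\delta_0]$, and its unique fixed point $\Theta^{\zeta+\delta}$ is the unique solution of \eqref{eq:conti} at $\zeta+\delta$.

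Finally, for the a priori bound I apply the hypothesis at $\zeta$ to $\Theta^{\zeta+\delta}$ with its own data $(\wt B_0,\wt F_0,\wt Q_0)$, and then estimate the latter by Assumption~\ref{assume2}(i): each of $\wt B_0,\wt F_0,\wt Q_0$ is dominated by the corresponding original datum plus $\delta$ times a constant multiple of the $|B^\ld(t,0)|,|F^\ld(t,0)|$--type terms and of $\sup_{t}\bigl(|X^{\zeta+\delta,\ld}_t|+|Y^{\zeta+\delta,\ld}_t|\bigr)$ together with the measure contribution $\bigl(\int_{[0,1]}\E[|X_t^{\zeta+\delta,\k}|^2]\,d\k\bigr)^{1/2}$. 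Raising to the $p$--th power and taking $\max_\ld\E[\cdot]$, the first group is dominated by the claimed right-hand side, while the second produces a term $K\delta^p\lVert\Theta^{\zeta+\delta}\rVert^p_{\cM^p[0,T]}$; after possibly decreasing $\delta_0$ so that this coefficient is $\le\tfrac12$, it is absorbed into the left-hand side, yielding the stated estimate. The main obstacle is the stability estimate of the preceding paragraph --- more precisely, verifying that the constant $K$, and hence $\delta_0$, is genuinely uniform in $\zeta$; this rests on the fact that the convex interpolations appearing in \eqref{eq:conti} satisfy the monotonicity of Assumption~\ref{assume2}(ii) with a $\zeta$--independent positive constant, so that the induction step can be iterated finitely many times from the base case \eqref{eq:zeta0} to reach $\zeta=1$.
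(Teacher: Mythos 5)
Your reduction of the $(\zeta+\delta)$-system to the $\zeta$-system with shifted data, the definition of the iteration map, the verification that the convex interpolations inherit the monotonicity of Assumption~\ref{assume2}(ii) with a $\zeta$-independent constant, and the final absorption argument for the a priori bound all match the paper's strategy. The gap is in the step you yourself flag as the crux: the claimed stability estimate
\[
\lVert\Theta^1-\Theta^2\rVert_{\cM^p[0,T]}^p\le K\max_{\ld\in[0,1]}\E\Bigl[|\Delta Q_0^\ld|^p+\Bigl(\int_0^T|\Delta B_0^\ld(t)|^2\,dt\Bigr)^{p/2}+\Bigl(\int_0^T|\Delta F_0^\ld(t)|^2\,dt\Bigr)^{p/2}\Bigr]
\]
for the nonlinear $\zeta$-system. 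The ``It\^o's-formula-plus-monotonicity computation'' (applying It\^o to $\Delta X^\ld_t\,\Delta Y^\ld_t$) only delivers control of $\E[|\Delta X^\ld_T|^2]$ and $\int_0^T\E[|\Delta X^\ld_t|^2+|\Delta Y^\ld_t|^2]\,dt$, i.e.\ an $L^2$-type stability; it does not produce $p$-th moments of the differences. Attempting to upgrade by feeding the $L^2$ information back into separate $L^p$ estimates for the forward SDE and the BSDE does not close: the forward $L^p$ bound on $\sup_t|\Delta X_t|^p$ requires $\E\bigl(\int_0^T|\Delta Y_t|\,dt\bigr)^p$, which is not controlled by $\int_0^T\E|\Delta Y_t|^2\,dt$ for $p>2$, and symmetrically for the backward equation, so one is back to the forward--backward circularity that the monotonicity method was introduced to break (and which it breaks only at the $L^2$ level). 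The induction hypothesis also does not supply this estimate --- it gives only an a priori bound on a single solution, which for a nonlinear system is not a stability estimate. So the contraction of $\mathcal{I}$ on $\cM^p[0,T]$ is not established.

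The fix is exactly the paper's route, and it is why Lemma~\ref{lem:momentbound} appears in the paper: run the contraction in the weaker norm $\max_{\ld}\bigl(\E[|x^{\ld}_T|^2]+\E\int_0^T|x^{\ld}_t|^2+|y^{\ld}_t|^2\,dt\bigr)$, where the monotonicity computation does yield a factor $C\delta/(k-3l-\e)<1$; this gives existence, uniqueness and convergence of the Picard iterates in $\cM^2[0,T]$. The $L^p$ information is then recovered \emph{separately} as a uniform a priori bound on the iterates $\Theta^k$ (using the hypothesis plus the Lipschitz structure, with $K\delta^p<1/2$ to absorb the recursive term), and Lemma~\ref{lem:momentbound} (uniform $\cM^p$ bounds together with $\cM^2$ convergence imply the limit lies in $\cM^p$ with the same bound) transfers it to the limit. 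Your final paragraph essentially contains this absorption computation already, so the repair is local: replace the $\cM^p$ contraction by the $\cM^2$ contraction and apply your a priori estimate to the iterates rather than to the fixed point.
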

\begin{proof}
Denote 
\begin{align*}
B^{\zeta,\ld}(t, x, \eta, y)&=\zeta B^{\ld}(t,x ,\eta, y) -(1-\zeta)y, \\
F^{\zeta,\ld}(t, x, \eta, y)&=\zeta F^{\ld}(t,x ,\eta, y) +(1-\zeta)x,\\
Q^{\zeta, \ld}(x ,\eta)&=\zeta Q^{\ld}(x,\eta)+(1-\zeta) x. 
\end{align*}
For any pair $(x,y) \in L^2$ such that $x^{\ld}_0=\xi^{\ld}$, according to our hypothesis, there exists a unique solution $(X,Y, Z)$ to 
\begin{align*}
\begin{cases}
dX^{\ld}_t = \left( B^{\zeta,\ld}(t,X^{\ld}_t, \cL^{m}(X_t), Y^{\ld}_t)+\delta B^{\ld}(t, x^{\ld}_t, \cL^m(x_t), y^{\ld}_t)+\delta y^{\ld}_t+B_0^{\ld}(t)  \right) dt + \sigma \, dW_t, \\
dY^{\ld}_t = -\left( F^{\zeta,\ld}(t,X^{\ld}_t, \cL^{m}(X_t), Y^{\ld}_t)+\delta F^{\ld}(t, x^{\ld}_t, \cL^m(x_t), y^{\ld}_t)-\delta x^{\ld}_t+F_0^{\ld}(t)  \right) dt + Z^{\ld} \, dW_t, \\
X^{\ld}_0=\xi^{\ld}, \\
Y^{\ld}_T=Q^{\zeta,\ld}(X^{\ld}_T, \cL^m(X_T))+\delta Q^{\ld}(x^{\ld}_T, \cL^m(x_T))-\delta x_T^{\ld}+Q_0^{\ld}. 
\end{cases}
\end{align*}
Thus we have obtained maps 
\begin{align}\label{eq:contract}
\Pi: (x,y) & \mapsto (X,Y),\\
\hat{\Pi}: (x,y) & \mapsto (X,Y,Z). \notag
\end{align}
For any $(x,y) \in \cM L^{2,c}_{\mathcal{F}} \times \cM L^{2,c}_{\mathcal{F}}$, define a norm 
\begin{align*}
\lVert (x,y) \rVert^2 := \max_{\ld \in [0,1]} \left( \E[|x^{\ld}_T|^2]+\E \left[\int_0^T |x^{\ld}_t|^2+ |y^{\ld}_t|^2 \, dt \right] \right).
\end{align*}

\vspace{5pt}
\textit{Step 1: }We show that $\Pi$ is a contraction under this norm. Take $(x,y), (\wt{x}, \wt{y}) \in \cM L^{2,c}_{\mathcal{F}}$, $(X,Y)= \Pi(x,y), (\wt{X}, \wt{Y})=\Pi(\wt{x}, \wt{y})$. Denote $\Delta x= x- \wt{x}, \Delta y= y -\wt{y}, \Delta X= X- \wt{X}, \Delta Y=Y-\wt{Y}$ and $\theta^{\ld}_t=(x^{\ld}_t, \cL^m(x_t), y^{\ld}_t)$, $\wt{\theta}^{\ld}_t=(\wt{x}^{\ld}_t, \cL^m(\wt{x}_t), \wt{y}^{\ld}_t)$, $\Theta^{\ld}_t=(X^{\ld}_t, \cL^m(X_t), Y^{\ld}_t)$, $\wt{\Theta}^{\ld}_t=(\wt{X}^{\ld}_t, \cL^m(\wt{X}_t), \wt{Y}^{\ld}_t)$. Using the terminal condition of $\Delta Y^{\ld}_T$, we get that 
\begin{align*}
& \E \left[  \Delta X^{\ld}_T \Delta Y^{\ld}_T \right]= \E \left[ \Delta X^{\ld}_T \left(Q^{\zeta,\ld}(X^{\ld}_T, \cL^m(X_T))- Q^{\zeta,\ld}(\wt{X}^{\ld}_T, \cL^m({\wt{X}_T}))\right) \right] \\
&+\delta \E \left[  \Delta X^{\ld}_T \left( Q^{\ld}(x^{\ld}_T, \cL^m(x_T))- Q^{\ld}(\wt{X}^{\ld}_T, \cL^m(\wt{x}_T))\right)-\Delta X^{\ld}_T \Delta x^{\ld}_T \right].
\end{align*}
Applying It\^{o}'s formula to $ \Delta X^{\ld}_t \Delta Y^{\ld}_t$, we also have 
\begin{align*}
&\E \left[  \Delta X^{\ld}_T \Delta Y^{\ld}_T \right]= \int_0^T  \E \left[ \Delta Y^{\ld}_t \left(B^{\zeta,\ld}(t, \Theta^{\ld}_t)-B^{\zeta,\ld}(t, \wt{\Theta}^{\ld}_t) \right) \right] dt  \\
&- \int_0^T   \E \left[ \Delta X^{\ld}_t \left(F^{\zeta,\ld}(t, \Theta^{\ld}_t)-F^{\zeta,\ld}(t, \wt{\Theta}^{\ld}_t \right)\right] dt \\
&+ \int_0^T \delta  \E \left[ \Delta Y^{\ld}_t \left(B^{\ld}(t, \theta^{\ld}_t)-B^{\ld}(t, \wt{\theta}^{\ld}_t) \right)+\Delta Y^{\ld}_t \Delta y^{\ld}_t \right] dt  \\
&- \int_0^T \delta  \E \left[ \Delta X^{\ld}_t \left(F^{\ld}(t, \theta^{\ld}_t)-F^{\ld}(t, \wt{\theta}^{\ld}_t)\right) -\Delta X^{\ld}_t \Delta x^{\ld}_t \right]dt.
\end{align*} 
According to our Assumption~\ref{assume2}, we can  easily get that 
\begin{align*}
&(k-2l-\e) \left( \E \left[ | \Delta X^{ \ld }_T |^2 \right] + \E \left[\int_0^T | \Delta X^{ \ld }_t |^2+ | \Delta Y^{\ld }_t |^2 \, dt \right]\right) \\
&\leq C \delta   \left( \E \left[ | \Delta x^{ \ld }_T |^2 \right] + \E \left[\int_0^T | \Delta x^{\ld }_t |^2+ | \Delta y^{ \ld }_t |^2 \, dt \right]\right) \\ 
& + C \delta   \left(  \int_{[0,1]} \E \left[ | \Delta x^{\k }_T |^2 \right] d \k +  \int_0^T \int_{[0,1]}  \E[ | \Delta x^{\ld }_t |^2] \, d \k  \, dt \right) \\ 
&+ l \left(\int_{\k \in [0,1]} \E \left[ | \Delta X^{ \ld }_T |^2 \right] d \k +\int_0^T \int_{  [0,1]} \E \left[ | \Delta X^{ \ld }_t |^2 \right] d \k \, d t \right),
\end{align*}
where $C$ is a constant only depends on $\e$ and Lipchitz constant $l,L$. Taking maximum of both sides, one can obtain that
\begin{align*}
&(k-2l-\e) \max_{\ld \in [0,1]} \left(  \E \left[ | \Delta X^{ \ld }_T |^2 \right] +\E \left[\int_0^T | \Delta X^{ \ld }_t |^2+ | \Delta Y^{ \ld }_t |^2 \, dt \right]\right)  \\
&\leq C \delta  \max_{\ld \in [0,1]} \left(  \E \left[ | \Delta x^{\ld }_T |^2 \right] +  \E \left[\int_0^T | \Delta x^{\ld }_t |^2+ | \Delta y^{\ld }_t |^2 \, dt \right]\right) \notag \\
& \ \ \ + l  \max_{\ld \in [0,1]} \left(\E \left[ | \Delta X^{\k }_T |^2 \right] +  \int_0^T \E \left[ | \Delta X^{\k}_t |^2 \right] dt \right),\notag
\end{align*}
and hence 
\begin{align}\label{eq:conti1}
&(k-3l-\e) \max_{\ld \in [0,1]}\left(  \E \left[ | \Delta X^{ \ld }_T |^2 \right] + \E \left[\int_0^T | \Delta X^{ \ld }_t |^2+ | \Delta Y^{ \ld }_t |^2 \, dt \right]\right)  \notag \\
&\leq C \delta    \max_{\ld \in [0,1]} \left( \E \left[ | \Delta x^{\ld }_T |^2 \right] +  \E \left[\int_0^T | \Delta x^{\ld }_t |^2+ | \Delta y^{\ld }_t |^2 \, dt \right]\right).
\end{align}

First choosing $\e$ such that $k-3l > \e$, and choosing $\delta$ small enough that $k-3l -\e > C \delta $, we finished proving that $\Pi$ is a contraction. 

\vspace{5pt}
\textit{Step 2: }
Take $X^0=Y^0=0$, and define recursively $\Theta^{k+1}=(X^{k+1},Y^{k+1},Z^{k+1})= \hat{\Pi}(X^k,Y^k)$, and the limit $\Theta= (X,Y,Z)$. It is clear from our hypothesis that $\ld \mapsto (X^{k,\ld}, Y^{k,\ld})$ is measurable for any $k \in \N$, and therefore the limit $\ld \mapsto (X^{\ld}, Y^{\ld})$ is also measurable. Using $\lim\limits_{k \to \infty} \lVert (X^{k}-X, Y^{k}-Y) \rVert=0$ and some standard estimate, we obtain that 
\begin{align*}
\lim\limits_{k \to \infty} \lVert \Theta^k -\Theta \rVert_{\cM^2[0,T]} =0 .
\end{align*}

\vspace{5pt}
\textit{Step 3: } Invoking Lemma~\ref{lem:momentbound}, it remains to show that 
\begin{align*}
\lVert \Theta^{k+1} \rVert^p_{\cM^p[0,T]} \leq K \max_{\ld \in [0,1]} \E \Bigg[& |\xi|^p+|Q_0^{\ld}|^p+\left(\int_0^T |B^{\ld}(t, 0)|^2+ |B_0^{\ld}(t)|^2 \right)^{p/2} \\
& +\left( \int_0^T |F^{\ld}(t,0)|^2+ |F^{\ld}_0(t)|^2 \, dt \right)^{p/2} \Bigg].
\end{align*}
As a result of our hypothesis and the Lipschitz property of $B^{\ld}, F^{\ld}, Q^{\ld}$, we obtain that 
\begin{align*}
\lVert \Theta^{k+1} \rVert^p_{\cM^p[0,T]} \leq& K \max_{\ld \in [0,1]} \E \Bigg[ |\xi|^p+|Q_0^{\ld}|^p+\left(\int_0^T |B^{\ld}(t, 0)|^2+ |B_0^{\ld}(t)|^2 \right)^{p/2} \\
& +\left( \int_0^T |F^{\ld}(t,0)|^2+ |F^{\ld}_0(t)|^2 \, dt \right)^{p/2} \Bigg]+K\delta^p \lVert \Theta^{k} \rVert^p_{\cM^p[0,T]}.
\end{align*}
Choosing $\delta$ small enough such that $K \delta^p < 1/2$, it is then clear that for each $k \geq 1$ 
\begin{align*}
\lVert \Theta^{k} \rVert^p_{\cM^p[0,T]} \leq& 2K \max_{\ld \in [0,1]} \E \Bigg[ |\xi|^p+|Q_0^{\ld}|^p+\left(\int_0^T |B^{\ld}(t, 0)|^2+ |B_0^{\ld}(t)|^2 \right)^{p/2} \\
& +\left( \int_0^T |F^{\ld}(t,0)|^2+ |F^{\ld}_0(t)|^2 \, dt \right)^{p/2} \Bigg].
\end{align*}
Letting $k \to \infty$, we obtain the same bound for $\Theta$.
\end{proof}

\begin{thm}
Under Assumption~\ref{assume2}, there exists a unique solution to \eqref{eq:GMFG2}. 
\end{thm}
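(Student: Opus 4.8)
The plan is to run the method of continuation that Lemma~\ref{lem:initialize } and Proposition~\ref{prop:2.1} have been set up to support. Observe first that \eqref{eq:GMFG2} is exactly the instance $\zeta=1$, $B_0=F_0=Q_0=0$ of the interpolating family \eqref{eq:conti} (which is driven by a single Brownian motion $W$, as \eqref{eq:GMFG2} is); by Lemma~\ref{lem:weakunique} this also transfers back to \eqref{eq:gGMFG} if desired. So it suffices to prove that for every $(B_0,F_0,Q_0)\in \cM L^{p,2}_{\cF}\times\cM L^{p,2}_{\cF}\times\cM L^{p}_{\cF_T}$ the system \eqref{eq:conti} at $\zeta=1$ admits a unique solution $\Theta\in\cM^p[0,T]$ with $\ld\mapsto\cL(X^{\ld}_t)$ measurable for every $t$, satisfying the a priori bound displayed in Proposition~\ref{prop:2.1}.

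Let $S\subseteq[0,1]$ be the set of $\zeta$ for which this solvability-with-bound statement holds. Lemma~\ref{lem:initialize } shows $0\in S$: at $\zeta=0$ the system \eqref{eq:zeta0} is linear and decouples across $\ld$, it is solved $\ld$ by $\ld$ through the linear BSDE \eqref{eq:linearBSDE} and an affine SDE, the bound \eqref{eq:pthmoment} holds and in particular implies the bound required in the hypothesis of Proposition~\ref{prop:2.1}, and measurability of $\ld\mapsto\cL(X^{0,\ld}_t)$ is inherited exactly as in Lemma~\ref{lem:picard}. Proposition~\ref{prop:2.1} then furnishes a step size $\delta_0>0$, \emph{independent of} $\zeta$, such that $\zeta\in S$ implies $\zeta+\delta\in S$ for all $\delta\in[0,\delta_0]$ with $\zeta+\delta\le 1$; measurability is preserved along the step because in Step~2 of that proof the solution is produced as an $\cM^2[0,T]$-limit of the measurable Picard iterates $\Theta^k=\hat\Pi(X^{k-1},Y^{k-1})$. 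Since $\delta_0$ is uniform in $\zeta$, applying this step $\lceil 1/\delta_0\rceil$ times from $0\in S$ gives $1\in S$; the constant $K$ in the a priori bound is enlarged only a fixed finite number of times and so stays a finite constant. Specializing to $B_0=F_0=Q_0=0$ yields the existence and the moment bound for \eqref{eq:GMFG2}.

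Uniqueness is already built into $S$ — each invocation of Proposition~\ref{prop:2.1} returns a \emph{unique} solution, and the base case is unique by Lemma~\ref{lem:initialize } — but it may also be checked directly: if $(X,Y,Z)$ and $(\wt X,\wt Y,\wt Z)$ both solve \eqref{eq:GMFG2}, apply It\^o's formula to $\Delta X^{\ld}_t\,\Delta Y^{\ld}_t$, use the monotonicity inequalities of Assumption~\ref{assume2}(ii) together with the decoupling of the mean-field interaction term as in the proof of Proposition~\ref{prop:2.1}, and arrive at $(k-3l)\max_{\ld}\big(\E|\Delta X^{\ld}_T|^2+\E\int_0^T|\Delta X^{\ld}_t|^2+|\Delta Y^{\ld}_t|^2\,dt\big)\le 0$; since $k>3l$ this forces $\Delta X\equiv\Delta Y\equiv 0$, and then $\Delta Z\equiv 0$ from the backward equation. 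I expect the only genuine work to be the two facts already established in Proposition~\ref{prop:2.1}, namely that the continuation step $\delta_0$ can be taken uniform in $\zeta$ and that the a priori estimate reproduces itself with the same structural form at each stage; granting those, the theorem is a short bookkeeping argument.
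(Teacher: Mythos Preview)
Your proposal is correct and follows essentially the same approach as the paper: existence is obtained by the method of continuation starting from Lemma~\ref{lem:initialize } and iterating Proposition~\ref{prop:2.1} with the uniform step $\delta_0$ (the paper merely states ``the existence can be deduced directly'' from these two results, whereas you spell out the bookkeeping), and uniqueness is proved by applying It\^o's formula to $\Delta X^{\ld}_t\,\Delta Y^{\ld}_t$ and using the monotonicity of Assumption~\ref{assume2} together with $k>3l$, exactly as in the paper.
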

\begin{proof}
The existence can be deduced directly from Lemma~\ref{lem:initialize } and Proposition~\ref{prop:2.1}. Let us only prove the uniqueness. Suppose there are two different solutions $(X,Y,Z)$ and $(\wt{X},\wt{Y}, \wt{Z})$ to \eqref{eq:GMFG2}, and denote $\Delta X= X- \wt{X}, \Delta Y=Y-\wt{Y}$. Applying It\^{o}'s formula to $\Delta X^{\ld}_T \Delta Y^{\ld}_T$ and using similar estimation as in \textit{Step 1} of Proposition~\ref{prop:2.1}, we conclude that 
\begin{align*}
& (k-2l) \E[(\Delta X^{\ld}_T)^2]+(k-2l)  \int_0^T \E\left[  (\Delta X^{\ld}_t)^2+(\Delta Y^{\ld}_t)^2 \right] dt\\
& \leq l \int_{ [0,1]} \E[ (\Delta X^{\k}_T)^2] \, d\k  +l \int_0^T \int_{[0,1]}\E\left[ (\Delta X^{\k}_t)^2\right] d \k \, dt.
\end{align*}
Taking maximum over all $\ld \in [0,1]$, it can be readily seen that 
\begin{align*}
& (k-2l) \max_{\ld \in [0,1]}\left( \E[(\Delta X^{\ld}_T)^2]+  \int_0^T \E\left[  (\Delta X^{\ld}_t)^2+(\Delta Y^{\ld}_t)^2 \right] dt \right) \\
& \leq l \left( \int_{ [0,1]} \E[ (\Delta X^{\k}_T)^2] \, d\k  + \int_0^T \int_{[0,1]}\E\left[ (\Delta X^{\k}_t)^2\right] d \k \, dt \right),
\end{align*}
which violates our assumption $k>3l$ .
\end{proof}

\begin{remark}
The method of continuation for FBSDEs developed by \cite{Yong:1997aa} is more flexible and complicated. Here we only work under a specific assumption.
\end{remark}

\section{Stability}

Denote the solution to \eqref{eq:gGMFG} by $(x_G,y_G,z_G)$. As in \cite[Theorem 3.1]{2020arXiv200313180B}, we prove that as $\lVert G-\wt{G} \rVert_{\square} \to 0$, 
\begin{align*}
 \int_0^1  \W_{2,T}\left(\cL(x^{\ld}_G,y^{\ld}_G), \cL(x^{\ld}_{\tilde{G}},y^{\ld}_{\tilde{G}})\right) d\ld \to 0.
\end{align*}

The operator $\Gamma:=\Phi \circ \Psi $ depends on $G$, and we denote it by $\Gamma_{G}$ (see \eqref{eq:step2}, \eqref{eq:step1} for the definition of $\Phi,\Psi$). The proof stability result will be divided into three steps. 
\begin{enumerate}[(i)]
\item The operator $\Gamma_G$ is a contraction under the norm $\lVert \cdot \rVert^{I}_k$. 
\item The operator $\Gamma_G$ is continuous in $G$, i.e., as $\lVert G-\wt{G} \rVert_{\square} \to 0$, 
\begin{align*} 
\lVert \Gamma^{G}(y)- \Gamma^{\wt{G}}(y) \rVert^I_{k} \to 0.
\end{align*}
\item It holds that 
\begin{align*}
 \int_0^1  \W_{2,T}\left(\cL(x^{\ld}_G,y^{\ld}_G), \cL(x^{\ld}_{\tilde{G}},y^{\ld}_{\tilde{G}})\right) d\ld \to 0.
\end{align*}
\end{enumerate}

\subsection{Contraction mapping}

\begin{assume}\label{assume3}
(i) $B^{\ld}_G(t,x, \bs{\eta},y)= B_0(t,x ,\bs{\eta}^{\ld}, y) + \int_{[0,1]} G(\ld, \k ) \, d\k \int \hat{B}(t,x,w,y) \, \bs{\eta}^{\k}(dw) $. \\
(ii) $F^{\ld}_G(t,x, \bs{\eta},y)= F_0(t,x ,\bs{\eta}^{\ld}, y) + \int_{[0,1]} G(\ld, \k ) \, d\k \int \hat{F}(t,x,w,y) \, \bs{\eta}^{\k}(dw) $. \\
(iii) $Q^{\ld}_G(x, \bs{\eta})= Q_0(x ,\bs{\eta}^{\ld}) + \int_{[0,1]} G(\ld, \k ) \, d\k \int \hat{Q}(x,w) \, \bs{\eta}^{\k}(dw) $. \\

\end{assume}

\begin{thm}\label{thm3.1}
Suppose $(x,y,z)$ and $(\tilde{x}, \tilde{y},\tilde{z})$ are solutions of \eqref{eq:gGMFG} with graphons $G$ and $\wt{G}$ respectively. Then under Assumption~\ref{assume1} with any $p>2$ and Assumption~\ref{assume3}, we have that as $\lVert G-\wt{G} \rVert_{\square} \to 0$ and $\E\left[ \int_{[0,1]}|x^{\ld}_0 -\wt{x}^{\ld}_0|^2\, d \ld\right] \to 0$,
\begin{align}\label{eq:stabilityconclusion}
\E\left[ \int_0^1  \left(  \sup_{u \in [0,T]} |x^{\ld}_u-\wt{x}^{\ld}_u|^2 + \sup_{u \in [0,T]} |y^{\ld}_u-\wt{y}^{\ld}_u|^2+ \int_0^T |z^{\ld}_s-\wt{z}^{\ld}_s|^2 \, ds \right) \, d \ld \right]  \to 0,
\end{align}
which implies 
\begin{align}\label{eq:stabilityconclusion2}
 \int_0^1  \W_{2,T}\left(\cL(x^{\ld},y^{\ld}), \cL(\tilde{x}^{\ld},\tilde{y}^{\ld})\right) d\ld \to 0.
\end{align}
\end{thm}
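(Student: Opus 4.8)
The plan is to carry out the three-step programme announced above, with the fixed-point operator $\Gamma_G=\Phi\circ\Psi$ of Section~2 (now carrying its dependence on the interacting graphon) as the central object. By Theorem~\ref{thm:contraction}, $y_G$ is the unique fixed point of $\Gamma_G$ in $\cM L^{p,c}_{\cF}$, and $(x_G,z_G)$ are recovered from $y_G$ via \eqref{eq:step2} and \eqref{eq:step1}; hence it suffices to prove $\lVert y_G-y_{\wt G}\rVert^{I,p}_{k}\to 0$ and then to push this estimate back through the two equations. I first record that, under Assumption~\ref{assume3} with $\lVert G\rVert_\infty\le 1$ and bounded $\hat B,\hat F,\hat Q$, the coefficients $B^\ld_G,F^\ld_G,Q^\ld_G$ satisfy Assumption~\ref{assume1} with constants that can be taken \emph{independent of} $G$, so the a priori estimates of Section~2 yield a uniform moment bound $\sup_G\lVert(x_G,y_G,z_G)\rVert_{\cM^p[0,T]}\le K$ with the fixed $p>2$; this uniform $L^p$-bound is exactly what the hypothesis $p>2$ buys us.

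\textbf{Step (i): $\Gamma_G$ is a contraction in $\lVert\cdot\rVert^{I,p}_{k}$, uniformly in $G$.} I would rerun the It\^o computation in the proof of Theorem~\ref{thm:contraction} verbatim, but \emph{integrating over $\ld\in[0,1]$} rather than taking the supremum over $\ld$. After integrating \eqref{eq:contract1} in $\ld$, the pointwise term $\tfrac{L_1}{2}|\Delta x^\ld_s|^p$ and the averaged term $\tfrac{L_1}{2}\int_{[0,1]}\E[|\Delta x^\k_s|^p]\,d\k$ coalesce under the extra $\ld$-integration, so one recovers exactly the forward estimates $\lVert x-\wt x\rVert^{I,p}_{k}\lesssim\lVert y-\wt y\rVert^{I,p}_{k}$ of \eqref{eq:contract5}--\eqref{eq:contract6}; the backward side is treated identically, and condition \eqref{eq:contraction} produces a contraction constant $\rho<1$ depending only on $(p,K_1,K_2,L_1,L_2,L_3,k)$, not on $G$.

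\textbf{Step (ii): continuity of $G\mapsto\Gamma_G(y)$.} Fix $y\in\cM L^{p,c}_{\cF}$ and set $x=\Psi_G(y)$, $\wt x=\Psi_{\wt G}(y)$. By Assumption~\ref{assume3}(i) the difference of drifts in the forward SDEs for $x^\ld$ and $\wt x^\ld$ splits into a $B_0$-part, Lipschitz in $\big(x^\ld,\cL(x^\ld)\big)$ and absorbed by Gr\"onwall, plus the graphon-sensitive term $\int_{[0,1]}\big(G(\ld,\k)-\wt G(\ld,\k)\big)\,d\k\int\hat B(t,x^\ld_t,w,y^\ld_t)\,\cL(x^\k_t)(dw)$. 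The crux is to bound the $\lVert\cdot\rVert^{I,p}_{k}$-norm of this term by a constant depending only on $K$ times a positive power of $\lVert G-\wt G\rVert_{\square}$; this is the cut-norm estimate behind \cite[Theorem~2.1]{2020arXiv200313180B}: writing $\psi(\ld,\k)=\int\hat B(t,x^\ld_t,w,y^\ld_t)\,\cL(x^\k_t)(dw)$, one truncates $\psi$ at a level $R$, approximates the truncation by a function piecewise constant in $\ld$ and in $\k$ (on which $\lVert\cdot\rVert_{\square}$ acts), and uses the uniform $p$-th moment bound with $p>2$ to make the truncation error vanish as $R\to\infty$ uniformly in $G$. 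Inserting this into Gr\"onwall for the forward SDE gives $\lVert x-\wt x\rVert^{I,p}_{k}\to0$, and propagating this through the backward equation (which carries its own $G$-dependence via $F$ and $Q$, again controlled by the same cut-norm estimate, and whose dependence on the input $x$ is handled by the Lipschitz/Gr\"onwall estimates of Step~(i)) yields $\lVert\Gamma_G(y)-\Gamma_{\wt G}(y)\rVert^{I,p}_{k}\to0$ as $\lVert G-\wt G\rVert_{\square}\to0$. This step is the main obstacle: $\psi$ depends on $\ld$ through the solution process and on $\k$ through its law, so it is a genuine two-variable function and not a product of test functions — which is precisely why the uniform-in-$G$ $L^p$-bound with $p>2$ is indispensable.

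\textbf{Step (iii): conclusion.} From $y_G=\Gamma_G(y_G)$ and $y_{\wt G}=\Gamma_{\wt G}(y_{\wt G})$, decompose $y_G-y_{\wt G}=\big(\Gamma_G(y_G)-\Gamma_G(y_{\wt G})\big)+\big(\Gamma_G(y_{\wt G})-\Gamma_{\wt G}(y_{\wt G})\big)$. Step~(i) bounds the first bracket by $\rho\lVert y_G-y_{\wt G}\rVert^{I,p}_{k}$, and Step~(ii), applied to the fixed function $y_{\wt G}$ (which has $\lVert\cdot\rVert^{I,p}_k$-norm $\le K$ uniformly), forces the second bracket to $0$; hence $(1-\rho)\lVert y_G-y_{\wt G}\rVert^{I,p}_{k}\to0$, so $\lVert y_G-y_{\wt G}\rVert^{I,p}_{k}\to 0$, and feeding this together with $\E\big[\int_{[0,1]}|x^\ld_0-\wt x^\ld_0|^2\,d\ld\big]\to0$ into Step~(i)'s forward estimate gives $\lVert x_G-x_{\wt G}\rVert^{I,p}_{k}\to0$ as well. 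To promote these integrated-in-time bounds to \eqref{eq:stabilityconclusion}, I would apply the BDG and Gr\"onwall inequalities pathwise in $\ld$ to the equations for $\Delta x^\ld,\Delta y^\ld,\Delta z^\ld$ and then integrate over $\ld$: the $\sup_t$- and $\int|z|^2$-terms are dominated by $\lVert x_G-x_{\wt G}\rVert^{I,2}_{k}+\lVert y_G-y_{\wt G}\rVert^{I,2}_{k}+\E\big[\int_{[0,1]}|x^\ld_0-\wt x^\ld_0|^2\,d\ld\big]+C\lVert G-\wt G\rVert_{\square}$, which $\to0$ since the $(I,2)$-norms are controlled by the $(I,p)$-norms for $p>2$. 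Finally, \eqref{eq:stabilityconclusion} implies \eqref{eq:stabilityconclusion2} immediately, because $\big(x^\ld,y^\ld\big)$ and $\big(\wt x^\ld,\wt y^\ld\big)$ form an admissible coupling in the definition of $\W_{2,T}$ and $\ld\mapsto\W_{2,T}(\cdot,\cdot)$ is measurable.
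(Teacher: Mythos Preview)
Your proposal is correct and follows essentially the same three-step programme as the paper: uniform contraction of $\Gamma_G$ in an integrated norm, continuity of $G\mapsto\Gamma_G(y)$ via the cut-norm estimate of \cite[Theorem~2.1]{2020arXiv200313180B}, the standard fixed-point decomposition, and a final BDG/Gr\"onwall upgrade to $\sup_t$. Two small caveats: the paper carries out the contraction and continuity steps in $\lVert\cdot\rVert^{I,2}_k$ rather than $\lVert\cdot\rVert^{I,p}_k$ (the $p>2$ moment bound is used only to control the truncation error in the cut-norm argument, not as the ambient norm), and the cut-norm estimate yields only \emph{qualitative} convergence as $\lVert G-\wt G\rVert_\square\to0$, not a positive power of the cut norm as you suggest---but since the theorem asserts only convergence this does not affect the argument.
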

\begin{proof}

\textit{Step 1:} 
By the same argument of Theorem~\ref{thm:contraction}, one can easily prove that  $\Gamma$ is a contraction with the norm $\lVert \cdot \rVert^{I,{2}}_k$ under Assumption~\ref{assume1}. For any $x \in \cM L^2_{\mathcal{F}}$, since $\lVert x \rVert_k^{{2}} \geq \lVert x \rVert^{I,{2}}_k$, the fixed point of under $\lVert \cdot \rVert_k^{{2}}$ must be the fixed under $\lVert \cdot \rVert^{I,{2}}_k$. 

\vspace{10pt}

\textit{Step 2:} Take $y$, and denote $x=\Psi_G(y), \, \wt{x}=\Psi_{\wt{G}}(y)$, $Y=\Phi_G(x), \,\wt{Y}=\Phi_{\wt{G}}(\wt{x})$. Let us calculate 
\begin{align*}
& e^{k t} |x^{\ld}_t -\wt{x}^{\ld}_t|^2 = |x^{\ld}_0 -\wt{x}^{\ld}_0|^2+ k \int_0^t e^{k s} |x^{\ld}_s -\wt{x}^{\ld}_s |^2 \, ds \notag  \\
& + 2  \int_0^t e^{k s } \left(x^{\ld}_s -\wt{x}^{\ld}_s \right) \cdot \left(B^{\ld}_{\wt{G}}(s, x^{\ld}_s, \cL^m(x_s), y^{\ld}_s )- B^{\ld}_{\wt{G}} (s, \wt{x}^{\ld}_s, \cL^m(\wt{x}_s), y^{\ld}_s) \right) ds  \notag \\
& + 2  \int_0^t e^{k s } \left(x^{\ld}_s -\wt{x}^{\ld}_s \right) \cdot \left(B^{\ld}_G(s, x^{\ld}_s, \cL^m(x_s), y^{\ld}_s )- B^{\ld}_{\wt{G}} (s, x^{\ld}_s, \cL^m(x_s), y^{\ld}_s) \right) ds \notag \\
& \leq (k-2K_1+2L_1+\e) \int_0^t e^{k s} |x^{\ld}_s -\wt{x}^{\ld}_s |^2 \, ds \notag \\
& \quad + \frac{1}{\e}   \int_0^t  e^{ks} \left(B^{\ld}_G(s, x^{\ld}_s, \cL^m(x_s), y^{\ld}_s )- B^{\ld}_{\wt{G}} (s, x^{\ld}_s, \cL^m(x_s), y^{\ld}_s) \right)^2 ds.
\end{align*}
Taking expectation and integration both sides  over $\ld$, we get that
\begin{align}\label{eq:stab1}
&\E\left[ \int_{[0,1]} e^{kT} |x^{\ld}_T-\wt{x}^{\ld}_T|^2 \, d \ld \right]+(2K_1-k-2L_1-\e)\E \left[\int_0^T  \int_{[0,1]} e^{ks} |x^{\ld}_s -\wt{x}^{\ld}_s|^2 \, d \ld  \, ds  \right] \\
& \leq \E\left[ \int_{[0,1]}|x^{\ld}_0 -\wt{x}^{\ld}_0|^2\, d \ld\right]+ C \E \left[ \int_0^T e^{ks} \, ds \int \left(B^{\ld}_G(s, x^{\ld}_s, \cL^m(x_s), y^{\ld}_s )- B^{\ld}_{\wt{G}} (s, x^{\ld}_s, \cL^m(x_s), y^{\ld}_s) \right)^2 d \ld  \right]. \notag
\end{align}

For the integrand of the last line, we show that as $\lVert G-\wt{G} \rVert_{\square} \to 0$
\begin{align}\label{eq:cutnorm1}
 \E \left[ \int_0^T e^{ks} \, ds \int \left(B^{\ld}_G(s, x^{\ld}_s, \cL^m(x_s), y^{\ld}_s )- B^{\ld}_{\wt{G}} (s, x^{\ld}_s, \cL^m(x_s), y^{\ld}_s) \right)^2 d \ld  \right]\to 0.
\end{align}
Due to Assumption~\ref{assume3}, we have that 
\begin{align*}
&\left(B^{\ld}_G(s, x^{\ld}_s, \cL^m(x_s), y^{\ld}_s )- B^{\ld}_{\wt{G}} (s, x^{\ld}_s, \cL^m(x_s), y^{\ld}_s) \right)^2 \\
& \leq \left|\int_{[0,1]} \left( G(\ld,\k) -\wt{G}(\ld,\k) \right) \, d \k  \int \hat{B}(s, x_s^{\ld}, w, y_s^{\ld})  \, \cL(x^{\k}_s)(dw) \right|^2 \\
& \leq C \left(1+|x^{\ld}_s|+ \int_{[0,1]} \sqrt{ \E[ |x_s^{\k}|^2 ]} \, d \k \right)\left|\int_{[0,1]} \left( G(\ld,\k) -\wt{G}(\ld,\k) \right) \, d \k  \int \hat{B}(s, x_s^{\ld}, w, y_s^{\ld})  \, \cL(x^{\k}_s)(dw) \right|  
\end{align*}
Taking expectation of both sides, using the boundedness of $\sup_{\ld \in [0,1]} \E[ |x_s^{\ld}|^2]$, and taking integral with respect to $\ld$, we get that 
\begin{align*}
& \E  \left[  \int \left(B^{\ld}_G(s, x^{\ld}_s, \cL^m(x_s), y^{\ld}_s )- B^{\ld}_{\wt{G}} (s, x^{\ld}_s, \cL^m(x_s), y^{\ld}_s) \right)^2  d \ld \right] \\
& \leq C \E \left[ \int  \left|\int_{[0,1]} \left( G(\ld,\k) -\wt{G}(\ld,\k) \right) \, d \k  \int \hat{B}(s, x_s^{\ld}, w, y_s^{\ld})  \, \cL(x^{\k}_s)(dw) \right| d \ld \right].
\end{align*}
By the estimation of $\mathcal{J}^{n,3}$ in the proof of \cite[Theorem 2.1]{2020arXiv200313180B} and the boundedness of $\E[|x_s^{\ld}|^{p}]+\E[|y_s^{\ld}|^{p} ]$, we obtain that as $\lVert G-\wt{G} \rVert_{\square} \to 0$
\begin{align*}
\E \left[ \int \left(B^{\ld}_G(s, x^{\ld}_s, \cL^m(x_s), y^{\ld}_s )- B^{\ld}_{\wt{G}} (s, x^{\ld}_s, \cL^m(x_s), y^{\ld}_s) \right)^2 d \ld  \right]\to 0.
\end{align*}
Then \eqref{eq:cutnorm1} follows from the fact that $t \mapsto \hat{B}(t,x,w,y)$ is Lipschitz uniformly for $(x,w,y)$.

Then let us estimate $\wt{Y}-Y$. From the equation
\begin{align*}
& e^{k T} \left|Q_G^{\ld}(x^{\ld}_T, \cL^m(x_T))- Q_{\wt{G}}^{\ld}(\wt{x}^{\ld}_T, \cL^m(\wt{x}_T))\right|^2 \\
& = e^{k t} |Y^{\ld}_t -\wt{Y}^{\ld}_t|^2 + k \int_t^T e^{k s} |Y^{\ld}_s -\wt{Y}^{\ld}_s|^2 \, ds+ \int_t^T e^{k s} |Z^{\ld}_s -\wt{Z}^{\ld}_s|^2 \, ds\\
& \ \ \  -2 \int_t^T e^{k s} \left(Y^{\ld}_s-\wt{Y}^{\ld}_s \right) \cdot \left(F^{\ld}_{\wt{G}}(s,x^{\ld}_s, \cL^m(x_s), Y^{\ld}_s)-F^{\ld}_{\wt{G}}(s, \wt{x}^{\ld}_s, \cL^m(\wt{x}_s), \wt{Y}^{\ld}_s) \right) ds \\
& \ \ \ -2 \int_t^T e^{k s} \left(Y^{\ld}_s-\wt{Y}^{\ld}_s \right) \cdot \left(F^{\ld}_G(s, x^{\ld}_s, \cL^m(x_s), Y^{\ld}_s )-F^{\ld}_{\wt{G}}(s, x^{\ld}_s, \cL^m(x_s), Y^{\ld}_s) \right) ds\\
& \ \ \ +\int_t^T e^{k s} (Y^{\ld}_s -\wt{Y}^{\ld}_s)(Z^{\ld}_s -\wt{Z}^{\ld}_s) \, dW_s^{\ld},
\end{align*}
it can be easily seen that 
\begin{align*}
& e^{k t} |Y^{\ld}_t -\wt{Y}^{\ld}_t|^2 + k \int_t^T e^{k s} |Y^{\ld}_s -\wt{Y}^{\ld}_s|^2 \, ds+ \int_t^T e^{k s} |Z^{\ld}_s -\wt{Z}^{\ld}_s|^2 \, ds \\ 
& \leq e^{k T} \left|Q_G^{\ld}(x^{\ld}_T, \cL^m(x_T))- Q_{\wt{G}}^{\ld}(\wt{x}^{\ld}_T, \cL^m(\wt{x}_T))\right|^2+(2L_2-2K_2+\e)\int_t^T e^{k s} |Y^{\ld}_s -\wt{Y}^{\ld}_s|^2 \, ds \\
& \ \ \ + L_2 \int_t^T e^{ks} |x^{\ld}_s-\wt{x}^{\ld}_s|^2 \, ds +\frac{L_2}{2}\int_t^T e^{ks} \E [ |x^{\ld}_s-\wt{x}^{\ld}_s|^2] \, ds  \\
& \ \ \ +\frac{L_2}{2} \int_t^T e^{ks} \int_{[0,1]} \E[ |x^{\k}_s-\wt{x}^{\k}_s|^2]  \, d \k  \, ds-\int_t^T e^{k s} (Y^{\ld}_s -\wt{Y}^{\ld}_s)(Z^{\ld}_s -\wt{Z}^{\ld}_s) \, dW_s^{\ld} \\
& \ \ \ +\frac{1}{\e} \int_t^T e^{ks} \left(F^{\ld}_G(s, x^{\ld}_s, \cL^m(x_s), Y^{\ld}_s )-F^{\ld}_{\wt{G}}(s, x^{\ld}_s, \cL^m(x_s), Y^{\ld}_s) \right)^2 ds.
\end{align*}
Noting that 
\begin{align*}
& e^{k T} \left|Q_G^{\ld}(x^{\ld}_T, \cL^m(x_T))- Q_{\wt{G}}^{\ld}(\wt{x}^{\ld}_T, \cL^m(\wt{x}_T))\right|^2 \\
& \leq C   \left(|x^{\ld}_T-\wt{x}^{\ld}_T|^2+\E [|x^{\ld}_T-\wt{x}^{\ld}_T|^2]+ \int_{[0,1]} \E [|x^{\beta}_T-\wt{x}^{\beta}_T|^2] \, d\beta \right) \\
& \ \ \ + C \left(Q_G^{\ld}(x^{\ld}_T, \cL^m(X_T))- Q_{\wt{G}}^{\ld}(x^{\ld}_T, \cL^m(X_T)) \right)^2,
\end{align*}
therefore one conclude that 
\begin{align}\label{eq:stab100}
& (k+2K_2-2L_2-\e)  \E \left[ \int_0^T \int_{[0,1]} e^{ks} |Y^{\ld}_s -\wt{Y}^{\ld}_s|^2  \, d \ld \, ds \right]+ \int_0^T \int_0^1 e^{k s} |Z^{\ld}_s -\wt{Z}^{\ld}_s|^2 \, d\ld \, ds \\
& \leq C \left( \E \left[ \int_{[0,1]} e^{kT} |x^{\ld}_T-\wt{x}^{\ld}_T|^2 \, d \ld  \right]  +\E \left[ \int_0^T  \int_{[0,1]} e^{ks} |x^{\ld}_s-\wt{x}^{\ld}_s|^2 \, d \ld \, ds \right] \right) \notag \\
& \ \ \ + C  \E \left[ \int_{[0,1]} \left(Q_G^{\ld}(x^{\ld}_T, \cL^m(X_T))- Q_{\wt{G}}^{\ld}(x^{\ld}_T, \cL^m(X_T)) \right)^2 d \ld\right]  \notag \\
& \ \ \ +C  \E \left[ \int_0^T e^{ks} ds \int_{[0,1]} \left(F^{\ld}_G(s, x^{\ld}_s, \cL^m(x_s), Y^{\ld}_s )-F^{\ld}_{\wt{G}}(s, x^{\ld}_s, \cL^m(x_s), Y^{\ld}_s) \right)^2  d \ld  \right]. \notag
\end{align}
Using the argument of \eqref{eq:cutnorm1}, it can be easily seen that the last two lines converge to $0$ as $\Vert G-\wt{G} \rVert_{\square} \to 0 $. In conjunction with \eqref{eq:stab1}, we finish proving that $\lVert \wt{Y} -Y \rVert^I_k \to 0$ as $\Vert \wt{G}-G \rVert_{\square} \to 0$.

\vspace{10pt}

\textit{Step 3:}
Denote by $y_G$ and $y_{\wt{G}}$ the fix point of $\Gamma_G$ and $\Gamma_{\wt{G}}$ respectively. Then it is readily seen that 
\begin{align*}
\lVert y_G-y_{\wt{G}} \rVert^I_k =&\lVert \Gamma_G(y_G)-\Gamma_{\wt{G}}(y_{\wt{G}})  \rVert^I_k \leq \lVert \Gamma_G(y_G)-\Gamma_{\wt{G}}(y_G)  \rVert^I_k+\lVert \Gamma_{\wt{G}}(y_G)-\Gamma_{\wt{G}}(y_{\wt{G}})  \rVert^I_k \\
\leq & \lVert \Gamma_G(y_G)-\Gamma_{\wt{G}}(y_G)  \rVert^I_k +\theta \lVert y_G-y_{\wt{G}} \rVert^I_k,
\end{align*}
and hence as $\lVert G-\wt{G} \rVert_{\square} \to 0$ 
\begin{align}\label{eq:stab2}
\lVert y_G-y_{\wt{G}} \rVert^I_k \leq \frac{1}{1-\theta}\lVert \Gamma_G(y_G)-\Gamma_{\wt{G}}(y_G)  \rVert^I_k \to 0.
\end{align}
Denote by $y=y_G, \, \wt{y}=y_{\wt{G}}$, $x= \Psi_G(y), \, \wt{x}=\Psi_{\wt{G}}(\wt{y})$. Similar to the derivation of \eqref{eq:stab1}, one easily obtain that 
\begin{align*}
&  \int_{[0,1]} \sup_{u \in [0,T]} \E [|x^{\ld}_u -\wt{x}^{\ld}_u|^2]  \, d\ld  \leq \E\left[ \int_{[0,1]}|x^{\ld}_0 -\wt{x}^{\ld}_0|^2\, d \ld\right] + L_1 \E \left[\int_0^T  \int_{[0,1]}  e^{ks} |y^{\ld}_s - \wt{y}^{\ld}_s|^2 \, d \ld \, d s  \right] \\
& \ \ \ + C \E \left[\int_0^T e^{ks} \, ds \int_{[0,1]}\left(B^{\ld}_G(s, x^{\ld}_s, \cL^m(x_s), y^{\ld}_s )- B^{\ld}_{\wt{G}} (s, x^{\ld}_s, \cL^m(x_s), y^{\ld}_s) \right)^2 d\ld \right].
\end{align*}
In combination with \eqref{eq:cutnorm1}, \eqref{eq:stab100}  and \eqref{eq:stab2}, it is clear that as $\lVert G-\wt{G} \rVert_{\square} \to 0$ and $\E\left[ \int_{[0,1]}|x^{\ld}_0 -\wt{x}^{\ld}_0|^2\, d \ld\right] \to 0$, 
\begin{align}\label{eq:convergence1}
 \int_0^1  \left(  \sup_{u \in [0,T]} \E[ |x^{\ld}_u-\wt{x}^{\ld}_u|^2]+ \int_0^T \E[ |y^{\ld}_u-\wt{y}^{\ld}_u|^2] \, d\ld+ \int_0^T \E [z^{\ld}_s-\wt{z}^{\ld}_s|^2 ] \, ds \right) \, d \ld   \to 0.
\end{align}

\vspace{10pt}

\textit{Step 4:} According to standard estimates, we have that 
\begin{align*}
& \sup_{u \in [0,T]} (x^{\ld}_u -\wt{x}^{\ld}_u)^2  \\
& \leq |x^{\ld}_0 -\wt{x}^{\ld}_0|^2 + C \int_0^T  |y^{\ld}_s - \wt{y}^{\ld}_s|^2+|x^{\ld}_s-\wt{x}^{\ld}_s|^2 +\E[ |x^{\ld}_s-\wt{x}^{\ld}_s|^2]\, d s  \\
& \ \ \ + C \int_0^T  \left(B^{\ld}_G(s, x^{\ld}_s, \cL^m(x_s), y^{\ld}_s )- B^{\ld}_{\wt{G}} (s, x^{\ld}_s, \cL^m(x_s), y^{\ld}_s) \right)^2 \, ds ,
\end{align*}
and 
\begin{align*}
& \sup_{u \in [0,T]} (y^{\ld}_u -\wt{y}^{\ld}_u)^2+\int_0^T (z^{\ld}_s-\wt{z}^{\ld}_s)^2 \, ds  \\
& \leq C\left( |x^{\ld}_T-\wt{x}^{\ld}_T|^2 +\E[ |x^{\ld}_T-\wt{x}^{\ld}_T|^2]\right) +\sup_{u \in [0,T]} \int_u^T (\wt{y}^{\ld}_s-{y}^{\ld}_s)(z^{\ld}_s-\wt{z}^{\ld}_s) \, dW^{\ld}_s \\
&\ \ \ + C \int_0^T  |y^{\ld}_s - \wt{y}^{\ld}_s|^2+|x^{\ld}_s-\wt{x}^{\ld}_s|^2 +\E[ |x^{\ld}_s-\wt{x}^{\ld}_s|^2]\, d s  \\
& \ \ \ + C \int_0^T  \left(F^{\ld}_G(s, x^{\ld}_s, \cL^m(x_s), y^{\ld}_s )- F^{\ld}_{\wt{G}} (s, x^{\ld}_s, \cL^m(x_s), y^{\ld}_s) \right)^2 \, ds .
\end{align*}

Taking expectation, using BDG inequality and integrating over $\ld$, we can conclude \eqref{eq:stabilityconclusion} from \eqref{eq:convergence1}.
\end{proof}

The next proposition gives a more explicit estimate than the above stability result in terms of the $L^p$ distance.
It will be used to obtain the convergence rate of propagation of chaos in the next section.

\begin{prop}\label{prop3.1}
Suppose $(x,y,z)$ and $(\tilde{x}, \tilde{y},\tilde{z})$ are solutions of \eqref{eq:gGMFG} with graphons $G$ and $\wt{G}$ respectively. Then under Assumption~\ref{assume1} with any $p \ge 2$ and Assumption~\ref{assume3}, we have that
\begin{align}\label{eq:stabilityconclusion-Lp}
& \E\left[ \int_0^1  \left(  \sup_{u \in [0,T]} |x^{\ld}_u-\wt{x}^{\ld}_u|^2 + \sup_{u \in [0,T]} |y^{\ld}_u-\wt{y}^{\ld}_u|^2+ \int_0^T |z^{\ld}_s-\wt{z}^{\ld}_s|^2 \, ds \right) \, d \ld \right] \\
& \le C\lVert G-\wt{G} \rVert_2^2 + C\int_0^1  \W_{2}^2\left(\cL(x^{\ld}_0), \cL(\tilde{x}^{\ld}_0)\right) d\ld, \notag
\end{align}
which implies 
\begin{align}\label{eq:stabilityconclusion2-Lp}
 \int_0^1  \W_{2,T}^2\left(\cL(x^{\ld},y^{\ld}), \cL(\tilde{x}^{\ld},\tilde{y}^{\ld})\right) d\ld \le C\lVert G-\wt{G} \rVert_2^2 + C\int_0^1  \W_{2}^2\left(\cL(x^{\ld}_0), \cL(\tilde{x}^{\ld}_0)\right) d\ld.
\end{align}
\end{prop}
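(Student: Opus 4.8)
The plan is to re-run the three-step scheme of the proof of Theorem~\ref{thm3.1}, but this time keeping quantitative control of every error term. The only substantive change is that the cut-norm estimate used there (which invoked the counting argument for $\mathcal J^{n,3}$ from \cite[Theorem 2.1]{2020arXiv200313180B}, and thereby forced $p>2$) will be replaced by an elementary Cauchy--Schwarz bound expressed through $\lVert G-\wt G\rVert_2$, which only needs $L^2$ moment control and hence works for any $p\ge2$. Throughout, I will put the two solutions on the common stochastic basis carrying $(W^\ld)_{\ld\in[0,1]}$ and couple the initial conditions so that, $\ld$-measurably, $\E[|\xi^\ld-\wt\xi^\ld|^2]=\W_2^2(\cL(x^\ld_0),\cL(\wt x^\ld_0))$; I will also use that, by Assumption~\ref{assume1}(v) and the existence results of Section~2, $\sup_\ld(\lVert x^\ld\rVert_S^2+\lVert y^\ld\rVert_S^2)<\infty$ for both solutions.

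First I would fix $y$, write $x=\Psi_{G,\xi}(y)$, $\bar x=\Psi_{\wt G,\wt\xi}(y)$, and set $\Delta B_s^\ld:=B_G^\ld(s,x_s^\ld,\cL^m(x_s),y_s^\ld)-B_{\wt G}^\ld(s,x_s^\ld,\cL^m(x_s),y_s^\ld)$. By Assumption~\ref{assume3}(i), $\Delta B_s^\ld=\int_{[0,1]}(G-\wt G)(\ld,\k)\,h_s^\ld(\k)\,d\k$ with $h_s^\ld(\k)=\int\hat B(s,x_s^\ld,w,y_s^\ld)\,\cL(x_s^\k)(dw)$, and the Lipschitz (hence affine-growth) property of $\hat B$ gives $|h_s^\ld(\k)|\le C(1+|x_s^\ld|+|y_s^\ld|+(\E|x_s^\k|^2)^{1/2})$. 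Cauchy--Schwarz in $\k$ and the uniform moment bounds then yield, uniformly in $s\in[0,T]$,
\[
\E\Big[\int_0^1|\Delta B_s^\ld|^2\,d\ld\Big]\le\Big(\sup_\ld\E\int_0^1|h_s^\ld(\k)|^2\,d\k\Big)\,\lVert G-\wt G\rVert_2^2\le C\lVert G-\wt G\rVert_2^2,
\]
and the identical bound holds for the corresponding $F$- and $Q$-increments. Plugging this into the It\^o expansion of $e^{ks}|x_s^\ld-\bar x_s^\ld|^2$ exactly as in \eqref{eq:stab1}, absorbing the monotone term via Assumption~\ref{assume1}(i), and choosing $\e$ small so that $2K_1-k-2L_1-\e>0$ (compatible with the admissible range of $k$ in Assumption~\ref{assume1}(iv)), one obtains $\lVert x-\bar x\rVert^{I,2}_k+\E[\int_0^1 e^{kT}|x_T^\ld-\bar x_T^\ld|^2 d\ld]\le C\E[\int_0^1|\xi^\ld-\wt\xi^\ld|^2 d\ld]+C\lVert G-\wt G\rVert_2^2$. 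Running the backward estimate of Step~2 of Theorem~\ref{thm3.1} for $Y=\Phi_G(x)$, $\bar Y=\Phi_{\wt G}(\bar x)$ in the same way --- splitting each $Q$- and $F$-term into a Lipschitz part in $(x,\bar x)$ plus a graphon-difference part controlled by the display above, and using Assumption~\ref{assume1}(ii) with $\e$ small so that $k+2K_2-2L_2-\e>0$ --- then gives
\[
\lVert Y-\bar Y\rVert^{I,2}_k+\int_0^T\!\!\int_0^1 e^{ks}|Z_s^\ld-\bar Z_s^\ld|^2\,d\ld\,ds\le C\,\E\Big[\int_0^1|\xi^\ld-\wt\xi^\ld|^2\,d\ld\Big]+C\lVert G-\wt G\rVert_2^2,
\]
that is, $\lVert\Gamma_{G,\xi}(y)-\Gamma_{\wt G,\wt\xi}(y)\rVert^{I,2}_k$ is dominated by the right-hand side above, for every $y$.

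Next I would close at the fixed points as in Step~3 of Theorem~\ref{thm3.1}. Since $\Gamma_{G,\xi}=\Phi_G\circ\Psi_{G,\xi}$ is a $\theta$-contraction under $\lVert\cdot\rVert^{I,2}_k$ with $\theta<1$ (the argument of Theorem~\ref{thm:contraction} specialized to $p=2$ uses only Assumption~\ref{assume1}), the fixed points $y=y_G$, $\wt y=y_{\wt G}$ satisfy $\lVert y-\wt y\rVert^{I,2}_k\le(1-\theta)^{-1}\lVert\Gamma_{G,\xi}(y)-\Gamma_{\wt G,\wt\xi}(y)\rVert^{I,2}_k\le C\E[\int_0^1|\xi^\ld-\wt\xi^\ld|^2 d\ld]+C\lVert G-\wt G\rVert_2^2$. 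Substituting back, with $x=\Psi_{G,\xi}(y)$ and $\wt x=\Psi_{\wt G,\wt\xi}(\wt y)$ the genuine solutions, the It\^o expansion of $|x_s^\ld-\wt x_s^\ld|^2$ now carries, besides the graphon and initial-datum contributions, an extra term $C\int_0^T e^{ks}\E|y_s^\ld-\wt y_s^\ld|^2\,ds$, which is controlled after integrating in $\ld$ by the previous bound; together with the analogous backward estimate and the BDG inequality applied to the $\int(\cdot)\,dW^\ld$ term (exactly as in Step~4 of Theorem~\ref{thm3.1}) this upgrades the estimates to $\sup_u$-in-time form and yields \eqref{eq:stabilityconclusion-Lp}, using $\E[\int_0^1|\xi^\ld-\wt\xi^\ld|^2 d\ld]=\int_0^1\W_2^2(\cL(x_0^\ld),\cL(\wt x_0^\ld))d\ld$. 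Finally \eqref{eq:stabilityconclusion2-Lp} follows because the coupling $((x^\ld,y^\ld),(\wt x^\ld,\wt y^\ld))$ is admissible in the definition of $\W_{2,T}$, so $\W_{2,T}^2(\cL(x^\ld,y^\ld),\cL(\wt x^\ld,\wt y^\ld))\le\E[\sup_t|x_t^\ld-\wt x_t^\ld|^2+\sup_t|y_t^\ld-\wt y_t^\ld|^2]$, and one integrates in $\ld$.

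The hard part will be purely organizational rather than conceptual: one must pick $k$ and $\e$ so as to meet simultaneously the contraction requirement of Assumption~\ref{assume1}(iv) and the positivity requirements $2K_1-k-2L_1-\e>0$ and $k+2K_2-2L_2-\e>0$ needed to absorb the monotone terms in both the forward and backward It\^o expansions, and then carry the constants through the four estimates (forward and backward with a common $y$, the fixed-point comparison, and the final substitution) while making sure the graphon contribution never degrades below $\lVert G-\wt G\rVert_2^2$. The one point not already present in the proof of Theorem~\ref{thm3.1} is the $\ld$-measurable selection of optimal couplings of $\xi^\ld$ and $\wt\xi^\ld$, which is standard.
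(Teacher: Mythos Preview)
Your proposal is correct and follows essentially the same approach as the paper: both re-run the proof of Theorem~\ref{thm3.1} step by step, replacing the cut-norm estimate \eqref{eq:cutnorm1} by the elementary Cauchy--Schwarz bound $|\Delta B_s^\ld|^2\le C(1+|x_s^\ld|^2+\cdots)\int_0^1|G(\ld,\k)-\wt G(\ld,\k)|^2\,d\k$, which immediately yields the quantitative $\lVert G-\wt G\rVert_2^2$ control and removes the need for $p>2$. The only addition in your write-up is the explicit mention of the $\ld$-measurable optimal coupling of the initial data, a point the paper takes for granted.
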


\begin{proof}
The arguments are very similar to those in the proof of Theorem \ref{thm3.1}, except that we have explicit estimates in terms of $\lVert G-\wt{G} \rVert_2$.
So here we only highlight the differences.
In particular, in step 2, we have
\begin{align*}
	&\left(B^{\ld}_G(s, x^{\ld}_s, \cL^m(x_s), y^{\ld}_s )- B^{\ld}_{\wt{G}} (s, x^{\ld}_s, \cL^m(x_s), y^{\ld}_s) \right)^2 \\
	& \leq \left|\int_{[0,1]} \left| G(\ld,\k) -\wt{G}(\ld,\k) \right| \, d \k  \int \hat{B}(s, x_s^{\ld}, w, y_s^{\ld})  \, \cL(x^{\k}_s)(dw) \right|^2 \\
	& \leq C \left(1+|x^{\ld}_s|^2+ \int_{[0,1]} \E[| x_s^{\k}|^2] \, d \k \right)\int_{[0,1]} \left| G(\ld,\k) -\wt{G}(\ld,\k) \right|^2 d \k.
\end{align*}
Therefore the estimate \eqref{eq:cutnorm1} can be replaced by
\begin{align*}
 \E \left[ \int_0^T e^{ks} \, ds \int \left(B^{\ld}_G(s, x^{\ld}_s, \cL^m(x_s), y^{\ld}_s )- B^{\ld}_{\wt{G}} (s, x^{\ld}_s, \cL^m(x_s), y^{\ld}_s) \right)^2 d \ld  \right] \le C\lVert G-\wt{G} \rVert_2^2.
\end{align*}
Similarly, the last two terms in \eqref{eq:stab100} can be estimated by
\begin{align*}
	& \E \left[ \int_{[0,1]} \left(Q_G^{\ld}(x^{\ld}_T, \cL^m(X_T))- Q_{\wt{G}}^{\ld}(x^{\ld}_T, \cL^m(X_T)) \right)^2 d \ld\right] \\
	& \quad + \E \left[ \int_0^T e^{ks} ds \int_{[0,1]} \left(F^{\ld}_G(s, x^{\ld}_s, \cL^m(x_s), Y^{\ld}_s )-F^{\ld}_{\wt{G}}(s, x^{\ld}_s, \cL^m(x_s), Y^{\ld}_s) \right)^2  d \ld  \right] \\
	& \le C\lVert G-\wt{G} \rVert_2^2,
\end{align*}
and hence $\lVert \wt{Y} -Y \rVert^I_k \le C\lVert G-\wt{G} \rVert_2^2$. 
In step 3, we can replace \eqref{eq:stab2} by $\lVert y_G-y_{\wt{G}} \rVert^I_k \leq C\lVert G-\wt{G} \rVert_2^2$ and hence replace \eqref{eq:convergence1} by 
\begin{align*}
&\int_0^1  \left(  \sup_{u \in [0,T]} \E[ |x^{\ld}_u-\wt{x}^{\ld}_u|^2]+ \int_0^T \E[ |y^{\ld}_u-\wt{y}^{\ld}_u|^2] \, d\ld+ \int_0^T \E [z^{\ld}_s-\wt{z}^{\ld}_s|^2 ] \, ds \right) \, d \ld \\
& \leq C\lVert G-\wt{G} \rVert_2^2 + C\int_0^1  \W_{2}^2\left(\cL(x^{\ld}_0), \cL(\tilde{x}^{\ld}_0)\right) d\ld.
\end{align*}
The same argument in step 4 gives \eqref{eq:stabilityconclusion-Lp} and \eqref{eq:stabilityconclusion2-Lp}. 
\end{proof}

\subsection{Method of continuation}

\begin{thm} \label{thm3.2}
Suppose $(x,y,z)$ and $(\tilde{x}, \tilde{y},\tilde{z})$ are solutions of \eqref{eq:gGMFG} with graphons $G$ and $\wt{G}$ respectively. Then under Assumption~\ref{assume2} with any $p>2$ and Assumption~\ref{assume3}, we have the convergence \eqref{eq:stabilityconclusion} and \eqref{eq:stabilityconclusion2} as $\lVert G-\wt{G} \rVert_{\square} \to 0$ and $\E\left[ \int_{[0,1]}|x^{\ld}_0 -\wt{x}^{\ld}_0|^2\, d \ld\right] \to 0$.

\end{thm}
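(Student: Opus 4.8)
The plan is to rerun the four‑step scheme of the proof of Theorem~\ref{thm3.1}, keeping its cut‑norm analysis essentially intact while replacing the contraction‑mapping ingredients, which are unavailable under Assumption~\ref{assume2}, by the monotonicity estimate used in Step~1 of Proposition~\ref{prop:2.1} and in the uniqueness part of the well‑posedness theorem for \eqref{eq:GMFG2}. I work throughout with the representation \eqref{eq:GMFG2}, so that for each fixed $\ld$ the two solutions $x^\ld$ (graphon $G$) and $\wt x^\ld$ (graphon $\wt G$) are driven by the same Brownian motion, whence $\Delta x^\ld:=x^\ld-\wt x^\ld$ has finite variation. The two structural ingredients are Assumption~\ref{assume3}, which expresses the coefficient differences $B^\ld_G-B^\ld_{\wt G}$, $F^\ld_G-F^\ld_{\wt G}$, $Q^\ld_G-Q^\ld_{\wt G}$ (evaluated along the $G$‑solution) as integrals of $\hat B,\hat F,\hat Q$ against $(G-\wt G)(\ld,\cdot)$, and the a priori bounds in $\cM^p[0,T]$ with $p>2$ for both solutions furnished by Proposition~\ref{prop:2.1} together with Assumption~\ref{assume2}(iii); the latter is exactly why the strict inequality $p>2$ is imposed, since these bounds are what allow the $\mathcal{J}^{n,3}$‑type estimate of \cite[Theorem~2.1]{2020arXiv200313180B} to be applied.

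The first step is a monotonicity estimate for the difference. I apply It\^o's formula to $\Delta x^\ld_t\,\Delta y^\ld_t$, which has no quadratic‑covariation term because $\Delta x^\ld$ carries no noise, and in the drift I split, e.g.,
\begin{align*}
B^\ld_G(t,x^\ld_t,\cL^m(x_t),y^\ld_t)-B^\ld_{\wt G}(t,\wt x^\ld_t,\cL^m(\wt x_t),\wt y^\ld_t)
&= \big[B^\ld_{\wt G}(t,x^\ld_t,\cL^m(x_t),y^\ld_t)-B^\ld_{\wt G}(t,\wt x^\ld_t,\cL^m(x_t),\wt y^\ld_t)\big]\\
&\quad +\big[B^\ld_{\wt G}(t,\wt x^\ld_t,\cL^m(x_t),\wt y^\ld_t)-B^\ld_{\wt G}(t,\wt x^\ld_t,\cL^m(\wt x_t),\wt y^\ld_t)\big]\\
&\quad +\big[B^\ld_{G}(t,x^\ld_t,\cL^m(x_t),y^\ld_t)-B^\ld_{\wt G}(t,x^\ld_t,\cL^m(x_t),y^\ld_t)\big],
\end{align*}
and likewise for $F$ and for the terminal term $Q$. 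The first brackets are treated with the joint monotonicity Assumption~\ref{assume2}(ii), the second with the Wasserstein–Lipschitz bounds in Assumption~\ref{assume2}(i) and Young's inequality (with a small $\e>0$), and the third — the \emph{graphon‑error} sources along the $G$‑solution — are left untouched. The cross term $\E[\Delta x^\ld_0\,\Delta y^\ld_0]$ arising from integration is handled with Young's inequality and the a priori bound $\sup_{\ld\in[0,1]}\E[|y^\ld_0|^2]<\infty$, so that after integrating over $\ld$ and using Cauchy–Schwarz it contributes at most $C\big(\int_{[0,1]}\E[|\Delta x^\ld_0|^2]\,d\ld\big)^{1/2}$. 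Taking expectations, integrating in $\ld$ and maximising over $\ld$ exactly as in the uniqueness argument, the terms $\int_{[0,1]}\E[|\Delta x^\k|^2]\,d\k$ are absorbed using $k>3l$, and, for $\e$ small enough that $k-3l-\e>0$, one gets
\begin{align*}
& (k-3l-\e)\max_{\ld\in[0,1]}\Big(\E[|\Delta x^\ld_T|^2]+\E\Big[\int_0^T|\Delta x^\ld_t|^2+|\Delta y^\ld_t|^2\,dt\Big]\Big)\\
& \le C\Big(\int_{[0,1]}\E[|\Delta x^\ld_0|^2]\,d\ld\Big)^{1/2}+C\big(\mathcal{E}_B+\mathcal{E}_F+\mathcal{E}_Q\big),
\end{align*}
where $\mathcal{E}_B=\E\big[\int_0^T\!\int_{[0,1]}(B^\ld_G-B^\ld_{\wt G})^2(t,x^\ld_t,\cL^m(x_t),y^\ld_t)\,d\ld\,dt\big]$ and $\mathcal{E}_F,\mathcal{E}_Q$ are the analogous quantities for $F$ and $Q$.

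The second step is to show $\mathcal{E}_B,\mathcal{E}_F,\mathcal{E}_Q\to0$ as $\lVert G-\wt G\rVert_{\square}\to0$; these are precisely the quantities controlled in \eqref{eq:cutnorm1} and in the last two lines of \eqref{eq:stab100}. Indeed, Assumption~\ref{assume3} gives
\begin{align*}
\big(B^\ld_G-B^\ld_{\wt G}\big)(t,x^\ld_t,\cL^m(x_t),y^\ld_t)=\int_{[0,1]}\big(G(\ld,\k)-\wt G(\ld,\k)\big)\,d\k\int\hat B(t,x^\ld_t,w,y^\ld_t)\,\cL(x^\k_t)(dw),
\end{align*}
and combining the linear growth of $\hat B$ in $(x,w,y)$, the uniform Lipschitz continuity of $t\mapsto\hat B(t,\cdot)$, the uniform‑in‑$\ld$ bound on $\E[|x^\ld_t|^p]+\E[|y^\ld_t|^p]$ with $p>2$, and the $\mathcal{J}^{n,3}$ estimate in \cite[Theorem~2.1]{2020arXiv200313180B} yields $\mathcal{E}_B\to0$, and identically $\mathcal{E}_F,\mathcal{E}_Q\to0$. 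Putting Steps~1 and~2 together with the hypothesis $\int_{[0,1]}\E[|\Delta x^\ld_0|^2]\,d\ld\to0$ gives $\int_0^1\big(\sup_{u\le T}\E[|\Delta x^\ld_u|^2]+\int_0^T\E[|\Delta y^\ld_u|^2]\,du\big)\,d\ld\to0$; the $z$‑contribution is recovered by applying It\^o's formula to $|\Delta y^\ld_t|^2$, with the $F$‑difference and the (already small) $\Delta x$, $\Delta y$, $\Delta x_T$ as sources and with $\E[|\Delta y^\ld_T|^2]$ controlled via the decomposition of $Q^\ld_G-Q^\ld_{\wt G}$. Finally the argument of Step~4 of Theorem~\ref{thm3.1} — standard SDE/BSDE estimates and the Burkholder–Davis–Gundy inequality for $\sup_{u\le T}|\Delta x^\ld_u|^2$ and $\sup_{u\le T}|\Delta y^\ld_u|^2$, then integrated over $\ld$ — upgrades this to \eqref{eq:stabilityconclusion}, and \eqref{eq:stabilityconclusion2} follows from \eqref{eq:stabilityconclusion} using the coupling that drives $x^\ld,\wt x^\ld$ by the common $W^\ld$ and couples $\cL(x^\ld_0),\cL(\wt x^\ld_0)$ optimally.

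I expect the main obstacle to be making the Step‑1 estimate close. In the uniqueness proof the two solutions share the graphon and the only right‑hand‑side term is $\int_{[0,1]}\E[|\Delta x^\k|^2]\,d\k$, absorbed by $k>3l$ to force $\Delta x=\Delta y\equiv0$; here one must instead cleanly isolate the genuine graphon‑error contributions $\mathcal{E}_B,\mathcal{E}_F,\mathcal{E}_Q$ from the monotone ones, verify these errors are truly lower‑order sources (which works because in \eqref{eq:GMFG2} the measure argument enters only through the fixed flow $\cL^m(x)$, so the decomposition above separates it cleanly), handle the extra $\E[\Delta x^\ld_0\,\Delta y^\ld_0]$ cross term, and choose the Young parameter $\e$ so that $k-3l-\e>0$ while the factors $\e^{-1}$ multiplying the error terms remain finite. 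A secondary technical point is checking that the a priori $\cM^p[0,T]$ bounds with $p>2$ from Proposition~\ref{prop:2.1} are uniform enough in $\ld$, and uniform over $G$ versus $\wt G$, to feed the $\mathcal{J}^{n,3}$ argument — which is the precise role of the hypothesis ``Assumption~\ref{assume2} with $p>2$''.
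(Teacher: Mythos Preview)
Your approach is correct and, in fact, takes a somewhat more direct route than the paper. The paper reuses the continuation-map machinery of Proposition~\ref{prop:2.1}: it introduces the integral norm $\lVert (x,y)\rVert^{I,2}$, shows that the map $\Pi^\zeta_G$ (with $\zeta=1-\delta$) is a contraction under it, estimates the one-step perturbation $\Pi^\zeta_G-\Pi^\zeta_{\wt G}$ on a fixed input via It\^o on $\Delta X^\ld\Delta Y^\ld$, and then invokes the fixed-point stability inequality $\lVert(X,Y)-(\wt X,\wt Y)\rVert^{I,2}\le\frac{1}{1-\theta}\lVert\Pi^\zeta_G(X,Y)-\Pi^\zeta_{\wt G}(X,Y)\rVert^{I,2}$. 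You bypass the map $\Pi^\zeta$ entirely and apply the same It\^o-plus-monotonicity computation directly to the pair of actual solutions, which is essentially the uniqueness argument with the graphon-error and initial-condition terms carried along as sources. Both rely on identical ingredients (Assumption~\ref{assume2}(ii) for the monotone part, Assumption~\ref{assume2}(i) and Young for the measure-Lipschitz part, the cut-norm estimate \eqref{eq:cutnorm1} for the graphon error, and the $p>2$ bounds from Proposition~\ref{prop:2.1} to feed that estimate). Your route is more self-contained; the paper's is more modular and recycles the structure already built.

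One small correction: your displayed Step~1 inequality should have $\int_{[0,1]}\cdots\,d\ld$ on the left-hand side rather than $\max_{\ld\in[0,1]}$. If you take the maximum over $\ld$ in the per-$\ld$ estimate, the right-hand side produces $\max_\ld\mathcal{E}_B^\ld$ rather than the integrated $\mathcal{E}_B$, and the $\mathcal{J}^{n,3}$-type cut-norm argument only controls the latter. Integrating over $\ld$ instead (as the paper does in \eqref{eq:cutnorm4}) gives $(k-3l-\e)\int_{[0,1]}\big(\cdots\big)\,d\ld\le C(\mathcal{E}_B+\mathcal{E}_F+\mathcal{E}_Q)+C\big(\int\E|\Delta x_0^\ld|^2\,d\ld\big)^{1/2}$, which is exactly what you need for \eqref{eq:stabilityconclusion}.
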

\begin{proof}
The proof is a mix of Proposition~\ref{prop:2.1} and Theorem~\ref{thm3.1}. For any graphon $G$, denote by $\bs{\nu}_G^{\zeta}(B_0,F_0,Q_0)$ the law of solution $X^{\zeta}$ to \eqref{eq:conti}. Let us recall the map $\Pi$ defined in \eqref{eq:contract}, and denote it by $\Pi^{\zeta}_{G}$ to indicate the dependence on the parameter $\zeta$ and coefficients $(B_G, F_G, Q_G)$. 

\vspace{10pt}

\textit{Step 1:} For any $(x,y) \in \cM L^{2,c}_{\mathcal{F}} \times \cM L^{2,c}_{\mathcal{F}}$, define a new norm 
\begin{align*}
\lVert (x,y) \rVert^{I,2}:=\int_{[0,1]} \E[|x^{\ld}_T|^2] +\int_0^T \E[ |x^{\ld}_t|^2+|y^{\ld}_t|^2 ] \, dt \, d \ld.
\end{align*}
Under Assumption~\ref{assume2}, it can be shown that $\Pi_G$ is a contraction under this norm.

\vspace{10pt}

\textit{Step 2:} Let us study $\Pi^{\zeta}_G- \Pi^{\zeta}_{\wt G}$. Take any $(x,y) \in \cM L^{2,c}_{\mathcal{F}} \times \cM L^{2,c}_{\mathcal{F}}$. Denote $(X,Y)=\Pi^{\zeta}_G(x,y)$, $(\wt X, \wt Y)= \Pi^{\zeta}_{\wt G} (x,y)$, $\Delta X= X- \wt X$, $\Delta Y= Y- \wt Y$, and $\theta^{\ld}_t=(x^{\ld}_t, \cL^m(x_t), y^{\ld}_t)$, $\Theta^{\ld}_t=(X^{\ld}_t, \cL^m(X_t), Y^{\ld}_t)$, $\wt{\Theta}^{\ld}_t=(\wt{X}^{\ld}_t, \cL^m(\wt{X}_t), \wt{Y}^{\ld}_t)$.

Let us compute 
\begin{align}\label{eq:cutnorm2}
&\E [\Delta X_T^{\ld} \Delta Y_T^{\ld}] 
 \geq   (k-2l-\e) \E[ (\Delta X^{\ld}_T)^2]- l \int_{[0,1]} \E[ (\Delta X^{\k}_T)^2] \, d \k  \\
 & - C  \E \left[ \left| Q^{\ld}_G(X^{\ld}_T, \cL^m(X_T))-Q^{\ld}_{\wt G} ( X^{\ld}_T, \cL^m(X_T)\right|^2+ \left| Q^{\ld}_G(x^{\ld}_T, \cL^m(x_T))-Q^{\ld}_{\wt G} ( x^{\ld}_T, \cL^m(x_T)\right| ^2 \right]. \notag
\end{align}
Using It\^{o}'s formula, we also obtain that 
\begin{align}\label{eq:cutnorm3}
&\E \left[  \Delta X^{\ld}_T \Delta Y^{\ld}_T \right] - \E \left[  \Delta X^{\ld}_0 \Delta Y^{\ld}_0 \right] \\
& \leq  - ( k-2l-\e) \int_0^T \E[(\Delta X^{\ld}_t)^2+ (\Delta Y^{\ld}_t )^2  ] \, dt +l \int_0^T \int_{[0,1]} \E[ (\Delta X^{\k})^2] \, d \k \, dt \notag \\
& \ \ \ +C \int_0^T \E\left[\left|B_G^{\ld}(t, \Theta^{\ld}_t)-B_{\wt G}^{\ld}(t, {\Theta}^{\ld}_t) \right|^2+\left|F_G^{\ld}(t, \Theta^{\ld}_t)-F_{\wt G}^{\ld}(t, {\Theta}^{\ld}_t) \right|^2 \right]   dt  \notag \\
&\ \ \ +C \int_0^T \E\left[\left|B_G^{\ld}(t, \theta^{\ld}_t)-B_{\wt G}^{\ld}(t, {\theta}^{\ld}_t) \right|^2+\left|F_G^{\ld}(t, \theta^{\ld}_t)-F_{\wt G}^{\ld}(t, {\theta}^{\ld}_t) \right|^2 \right]   dt . \notag
\end{align} 
Combining the above two inequalities \eqref{eq:cutnorm2}, \eqref{eq:cutnorm3} and integrating over $\ld \in [0,1]$, we get that 
\begin{align} \label{eq:cutnorm4}
& (k-3l-\e) \left( \int_{[0,1]} \E[ (\Delta X^{\ld}_T)^2] \, d \ld+\int_0^T \int_{[0,1]} \E[(\Delta X^{\ld}_t)^2+ (\Delta Y^{\ld}_t )^2  ] \, d \ld \, dt \right) \\
&  \leq C  \E \left[ \left| Q^{\ld}_G(X^{\ld}_T, \cL^m(X_T))-Q^{\ld}_{\wt G} ( X^{\ld}_T, \cL^m(X_T)\right|^2\right] \notag \\
& +C\E \left[ \left| Q^{\ld}_G(x^{\ld}_T, \cL^m(x_T))-Q^{\ld}_{\wt G} ( x^{\ld}_T, \cL^m(x_T)\right| ^2 \right] \notag \\
&+C \int_0^T \E\left[\left|B_G^{\ld}(t, \Theta^{\ld}_t)-B_{\wt G}^{\ld}(t, {\Theta}^{\ld}_t) \right|^2+\left|F_G^{\ld}(t, \Theta^{\ld}_t)-F_{\wt G}^{\ld}(t, {\Theta}^{\ld}_t) \right|^2 \right]   dt  \notag \\
&+C \int_0^T \E\left[\left|B_G^{\ld}(t, \theta^{\ld}_t)-B_{\wt G}^{\ld}(t, {\theta}^{\ld}_t) \right|^2+\left|F_G^{\ld}(t, \theta^{\ld}_t)-F_{\wt G}^{\ld}(t, {\theta}^{\ld}_t) \right|^2 \right]   dt + \int_{[0,1]}\E \left[  \Delta X^{\ld}_0 \Delta Y^{\ld}_0 \right]d\ld\notag.
\end{align}
Using the same argument as in \eqref{eq:cutnorm1}, we can show that the right hand side of the above inequality converges to $0$ as $\lVert G-\wt{G} \rVert_{\square} \to 0$ and $\E\left[ \int_{[0,1]}|X^{\ld}_0-\wt{X}^{\ld}_0|^2\, d \ld\right] \to 0$.

\vspace{10 pt}

\textit{Step 3:} Choose $\delta$ as in Proposition~\ref{prop:2.1}, $\zeta=1-\delta$,  and $(X,Y), (\wt X, \wt Y)$ to be the unique fixed point of $\Pi^{\zeta}_G, \Pi^{\zeta}_{\wt G} $ respectively. Then it is clear that 
\begin{align*}
\lVert (X,Y)-(\wt X, \wt Y) \rVert^{I,2}& =  \lVert \Pi^{\zeta}_G(X,Y)-\Pi^{\zeta}_{\wt G}(\wt X, \wt Y) \rVert^{I,2} \\
& \leq  \lVert \Pi^{\zeta}_G(X,Y)-\Pi^{\zeta}_{\wt G}(X,Y) \rVert^{I,2}+\lVert \Pi^{\zeta}_{\wt G} (X,Y)-\Pi^{\zeta}_{\wt G}(\wt X,\wt Y) \rVert^{I,2}. 
\end{align*}
Since $\Pi^{\zeta}_{\wt G}$ is $\theta$- Lipschitz with some $\theta <1$ for all graphon $\wt G$, we have that 
\begin{align*}
& \lVert (X,Y)-(\wt X, \wt Y) \rVert^{I,2} \leq \frac{1}{1-\theta}  \lVert \Pi^{\zeta}_G(X,Y)-\Pi^{\zeta}_{\wt G}(X,Y) \rVert^{I,2}.
\end{align*}
Due to \textit{Step 2}, we know that 
\begin{equation}\label{eq:thm3.2-1}
	\lVert \Pi^{\zeta}_G(X,Y)-\Pi^{\zeta}_{\wt G}(X,Y) \rVert^{I,2} \to 0
\end{equation} 
as $\lVert G -\wt{G} \rVert_{\square} \to 0$ and $\E\left[ \int_{[0,1]}|X^{\ld}_0-\wt{X}^{\ld}_0|^2\, d \ld\right] \to 0$.

\vspace{10 pt}
\textit{Step 4:}
Recall the map \eqref{eq:contract} and note that the evolution of $(X,Y), (\wt X, \wt Y)$ is given by \eqref{eq:gGMFG} with graphon $G$ and $\wt{G}$ respectively.
By It\^o's formula, we have
\begin{align*}
& \left|Q_G^{\ld}(X^{\ld}_T, \cL^m(X_T))- Q_{\wt{G}}^{\ld}(\wt{X}^{\ld}_T, \cL^m(\wt{X}_T))\right|^2 \\
& = |Y^{\ld}_t -\wt{Y}^{\ld}_t|^2 + \int_t^T |Z^{\ld}_s -\wt{Z}^{\ld}_s|^2 \, ds\\
& \ \ \  -2 \int_t^T \left(Y^{\ld}_s-\wt{Y}^{\ld}_s \right) \cdot \left(F^{\ld}_G(s, X^{\ld}_s, \cL^m(X_s), Y^{\ld}_s )-F^{\ld}_{\wt{G}}(s, \wt{X}^{\ld}_s, \cL^m(\wt{X}_s), \wt{Y}^{\ld}_s) \right) ds \\
& \ \ \ +\int_t^T (Y^{\ld}_s -\wt{Y}^{\ld}_s)(Z^{\ld}_s -\wt{Z}^{\ld}_s) \, dW_s^{\ld}.
\end{align*}
Taking expectations, integrating over $\lambda$, and using \eqref{eq:thm3.2-1}, we get
$$\int_0^1\int_0^T \E|Z^{\ld}_s -\wt{Z}^{\ld}_s|^2 \, ds \,d\lambda\to 0.$$
Lastly, using the argument of \textit{Step 4} in Theorem~\ref{thm3.1}, we can easily conclude \eqref{eq:stabilityconclusion}. 
\end{proof}

The next proposition gives a more explicit estimate than the above stability result in terms of the $L^p$ distance.
It will be used to obtain the convergence rate of propagation of chaos in the next section.

\begin{prop}\label{prop3.2}
Suppose $(x,y,z)$ and $(\tilde{x}, \tilde{y},\tilde{z})$ are solutions of \eqref{eq:gGMFG} with graphons $G$ and $\wt{G}$ respectively. Then under Assumption~\ref{assume2} with any $p \ge 2$ and Assumption~\ref{assume3}, we have the estimates \eqref{eq:stabilityconclusion-Lp} and \eqref{eq:stabilityconclusion2-Lp}.
\end{prop}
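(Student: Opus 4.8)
\emph{Proof sketch.} The plan is to re-run the proof of Theorem~\ref{thm3.2} line by line, replacing every cut-norm bound by the explicit $L^2$ bound already used in Proposition~\ref{prop3.1}; no new mechanism is needed, only careful bookkeeping of constants, and, just as in Proposition~\ref{prop3.1}, $p\ge 2$ now suffices (the strict inequality $p>2$ in Theorem~\ref{thm3.2} is required only to absorb the extra moment produced by the counting/cut-norm estimate, which does not appear here). I would keep the maps $\Pi^{\zeta}_G$ from \eqref{eq:contract} and the integrated pair norm $\lVert(x,y)\rVert^{I,2}$ introduced in the proof of Theorem~\ref{thm3.2}, recalling from its Step~1 that under Assumption~\ref{assume2} the map $\Pi^{\zeta}_G$ is a $\theta$-contraction in $\lVert\cdot\rVert^{I,2}$ with $\theta<1$ independent of $\zeta$ and of $G$. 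I would also use that iterating Proposition~\ref{prop:2.1} from $\zeta=0$ (solvable by Lemma~\ref{lem:initialize }) yields, for every $\zeta\in[0,1]$, a unique solution of \eqref{eq:conti} whose $\cM^p[0,T]$-norm is controlled by the data norms alone; in particular the solutions $(x_G,y_G,z_G)$ and $(x_{\wt G},y_{\wt G},z_{\wt G})$ of \eqref{eq:gGMFG} satisfy $\sup_{t\in[0,T]}\sup_{\ld\in[0,1]}\bigl(\E[|x^{\ld}_t|^p]+\E[|y^{\ld}_t|^p]\bigr)\le K$ with $K$ independent of the graphon.

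The core step is the $L^2$ analogue of Step~2 of Theorem~\ref{thm3.2}. Fix $(x,y)$ in the $L^p$-ball above, and set $(X,Y)=\Pi^{\zeta}_G(x,y)$, $(\wt X,\wt Y)=\Pi^{\zeta}_{\wt G}(x,y)$. Applying It\^o's formula to $\Delta X^{\ld}_t\Delta Y^{\ld}_t$ together with the terminal identity for $\Delta Y^{\ld}_T$ exactly as in the derivation of \eqref{eq:cutnorm4}, the monotonicity in Assumption~\ref{assume2} absorbs all terms except the graphon-discrepancies of $B$ and $F$ (evaluated both at the input $\theta^{\ld}$ and at the output $\Theta^{\ld}$) and of $Q$ (at $x^{\ld}_T$ and at $X^{\ld}_T$). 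For each of these, the linear-in-$G$ structure of Assumption~\ref{assume3} gives precisely the bound displayed in the proof of Proposition~\ref{prop3.1}, i.e. a constant multiple of $\bigl(1+|X^{\ld}_s|^2+\int_{[0,1]}\E[|X^{\k}_s|^2]\,d\k\bigr)\int_{[0,1]}|G(\ld,\k)-\wt G(\ld,\k)|^2\,d\k$ (and analogously for the $F$- and $Q$-terms); taking expectation, integrating in $\ld$, and using the uniform second moments collapses every such term to $C\lVert G-\wt G\rVert_2^2$. Handling the boundary term $\int_{[0,1]}\E[\Delta X^{\ld}_0\Delta Y^{\ld}_0]\,d\ld$ by realizing the two systems on a common stochastic basis (Lemma~\ref{lem:weakunique}), with an $\ld$-measurable optimal coupling of the initial laws so that $\E[|\Delta X^{\ld}_0|^2]=\W_2^2(\cL(x^{\ld}_0),\cL(\wt x^{\ld}_0))$, we arrive at
\begin{align*}
\lVert\Pi^{\zeta}_G(x,y)-\Pi^{\zeta}_{\wt G}(x,y)\rVert^{I,2}\le C\lVert G-\wt G\rVert_2^2+C\int_0^1\W_2^2\bigl(\cL(x^{\ld}_0),\cL(\wt x^{\ld}_0)\bigr)\,d\ld,
\end{align*}
with $C$ depending only on $\theta$, the Lipschitz and monotonicity constants, $T$, and $K$.

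To close the continuation, I would argue as in Step~3 of Theorem~\ref{thm3.2}: choose $\delta$ as in Proposition~\ref{prop:2.1} and set $\zeta=1-\delta$, so that the fixed points of $\Pi^{\zeta}_G$ and $\Pi^{\zeta}_{\wt G}$ are exactly $(x_G,y_G)$ and $(x_{\wt G},y_{\wt G})$; the triangle inequality and the $\theta$-contraction of $\Pi^{\zeta}_{\wt G}$ give $\lVert(x_G,y_G)-(x_{\wt G},y_{\wt G})\rVert^{I,2}\le(1-\theta)^{-1}\lVert\Pi^{\zeta}_G(x_G,y_G)-\Pi^{\zeta}_{\wt G}(x_G,y_G)\rVert^{I,2}$, which the previous display bounds by $C\lVert G-\wt G\rVert_2^2+C\int_0^1\W_2^2(\cL(x^{\ld}_0),\cL(\wt x^{\ld}_0))\,d\ld$ since $(x_G,y_G)$ lies in the $L^p$-ball. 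As in Step~4 of Theorem~\ref{thm3.2}, an It\^o computation on $|\Delta Y^{\ld}|^2$ then controls $\int_0^1\int_0^T\E|z^{\ld}_G-z^{\ld}_{\wt G}|^2\,ds\,d\ld$ by the same quantity, and the standard pathwise SDE/BSDE estimates with the Burkholder--Davis--Gundy inequality (as in Step~4 of Theorem~\ref{thm3.1} and Proposition~\ref{prop3.1}) upgrade these time-integrated bounds to the $\sup_u$ bounds, which is \eqref{eq:stabilityconclusion-Lp}; then \eqref{eq:stabilityconclusion2-Lp} follows because the coupling chosen above is admissible in the infimum defining $\W_{2,T}$. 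The only genuinely delicate point, as in Theorem~\ref{thm3.2}, is ensuring that the constant $C$ surviving the finitely many continuation steps stays finite and independent of $G,\wt G$; this is guaranteed by the uniform $\cM^p[0,T]$-bound from Proposition~\ref{prop:2.1}, and everything else is the routine computation already performed in Theorem~\ref{thm3.2} and Proposition~\ref{prop3.1}.
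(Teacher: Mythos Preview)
Your proposal is correct and follows essentially the same approach as the paper: re-run the proof of Theorem~\ref{thm3.2}, replacing each cut-norm estimate by the explicit $L^2$ bound from Proposition~\ref{prop3.1}, and then pass through Steps~3--4 as before. The paper's own proof is in fact just this sketch, highlighting only the modified inequalities in Steps~2 and~3; your version supplies additional (correct) detail on why $p\ge2$ suffices and on the optimal coupling of initial laws needed to produce the $\W_2^2$ term in the final bound.
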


\begin{proof}
The arguments are very similar to those in the proof of Theorem \ref{thm3.2}, except that we have explicit estimates in terms of $\lVert G-\wt{G} \rVert_2$.
So here we only highlight the differences.
In particular, in step 2, from \eqref{eq:cutnorm4} we have
\begin{align*}
	& (k-3l-\e) \left( \int_{[0,1]} \E[ (\Delta X^{\ld}_T)^2] \, d \ld+\int_0^T \int_{[0,1]} \E[(\Delta X^{\ld}_t)^2+ (\Delta Y^{\ld}_t )^2  ] \, d \ld \, dt \right) \\
	& \le C\lVert G-\wt{G} \rVert_2^2 + \int_{[0,1]}\E \left[  \Delta X^{\ld}_0 \Delta Y^{\ld}_0 \right]d\ld.
\end{align*}
In step 3, we have 
\begin{align*}
\lVert (X,Y)-(\wt X, \wt Y) \rVert^{I,2} & \leq \frac{1}{1-\theta}  \lVert \Pi^{\zeta}_G(X,Y)-\Pi^{\zeta}_{\wt G}(X,Y) \rVert^{I,2} \\
& \le C\lVert G-\wt{G} \rVert_2^2 + C\int_0^1  \W_{2}^2\left(\cL(x^{\ld}_0), \cL(\tilde{x}^{\ld}_0)\right) d\ld.
\end{align*} 
Using the argument of \textit{Step 4} in Theorem \ref{thm3.2}, we have
$$\int_0^1\int_0^T \E|Z^{\ld}_s -\wt{Z}^{\ld}_s|^2 \, ds \,d\lambda \le C\lVert G-\wt{G} \rVert_2^2 + C\int_0^1  \W_{2}^2\left(\cL(x^{\ld}_0), \cL(\tilde{x}^{\ld}_0)\right) d\ld.$$ 
Using the argument of \textit{Step 4} in Theorem~\ref{thm3.1}, we have the estimates \eqref{eq:stabilityconclusion-Lp} and \eqref{eq:stabilityconclusion2-Lp}.
\end{proof}

Lastly, the following proposition shows the continuity of $\lambda \mapsto \mathcal{L}(X^\lambda,Y^\lambda)$.
The proof follows from standard coupling arguments similar to \cite[Theorem 2.1]{2020arXiv200313180B}.

\begin{prop}\label{prop-continuity}
Suppose Assumption~\ref{assume3} holds. 
Suppose either Assumption~\ref{assume1} or Assumption~\ref{assume2} holds with some $p \ge 2$.
If $G$ is (Lipschitz) continuous and $\lambda \mapsto \mathcal{L}(X^\lambda_0)$ is (Lipschitz) continuous with respect to $\W_{2}$, then $\lambda \mapsto \mathcal{L}(X^\lambda,Y^\lambda)$ is (Lipschitz) continuous with respect to $\W_{2,T}$.
\end{prop}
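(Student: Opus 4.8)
The plan is to fix two types $\lambda, \lambda' \in [0,1]$ and bound $\W_{2,T}(\mathcal{L}(X^{\lambda},Y^{\lambda}), \mathcal{L}(X^{\lambda'},Y^{\lambda'}))$ in terms of $\int_{[0,1]} |G(\lambda,\kappa) - G(\lambda',\kappa)|\, d\kappa$ (or its square) plus $\W_2(\mathcal{L}(X^{\lambda}_0), \mathcal{L}(X^{\lambda'}_0))$, so that the asserted (Lipschitz) continuity of $G$ and of $\lambda \mapsto \mathcal{L}(X^{\lambda}_0)$ transfers to $\lambda \mapsto \mathcal{L}(X^{\lambda},Y^{\lambda})$. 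The natural coupling is to drive the two FBSDEs by the same Brownian motion $W$ (using Lemma~\ref{lem:weakunique} to reduce to the single-noise system \eqref{eq:GMFG2}) and to couple the initial conditions $X^{\lambda}_0, X^{\lambda'}_0$ optimally for $\W_2$. One then writes $\Delta X_t := X^{\lambda}_t - X^{\lambda'}_t$, $\Delta Y_t := Y^{\lambda}_t - Y^{\lambda'}_t$, and estimates $\mathbb{E}[\sup_t |\Delta X_t|^2 + \sup_t|\Delta Y_t|^2]$.

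First I would decompose the difference of the drift/terminal coefficients into three pieces: (a) the part coming from $\Delta X_t, \Delta Y_t$ themselves, handled by the Lipschitz assumptions (Assumption~\ref{assume1}(i)(iii) or \ref{assume2}(i)) together with the monotonicity used in Theorems~\ref{thm3.1}/\ref{thm3.2}; (b) the part coming from the difference of the two \emph{measure families} entering through $\mathcal{L}^m(X_\cdot)$ — but crucially, in \eqref{eq:gGMFG} only $\bs{\eta}^{\lambda}$ versus $\bs{\eta}^{\lambda'}$ and the \emph{same} integral $\int_{[0,1]}\W_p(\bs{\eta}^{\kappa},\widetilde{\bs{\eta}}^{\kappa})\,d\kappa$ appear, so the "mean-field" term is identical for the two types and only the $\W_p(\bs{\eta}^{\lambda},\bs{\eta}^{\lambda'})$ term survives, which is itself controlled by $\mathbb{E}[\sup_t|\Delta X_t|^2]^{1/2}$; and (c) the genuinely new term, the difference $|B^{\lambda}_G - B^{\lambda'}_G|$, $|F^{\lambda}_G - F^{\lambda'}_G|$, $|Q^{\lambda}_G - Q^{\lambda'}_G|$ arising purely from $\lambda \mapsto \lambda'$ in $G$ and in $B_0, F_0, Q_0$. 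Under Assumption~\ref{assume3} this last term is bounded by $\big|\int_{[0,1]}(G(\lambda,\kappa) - G(\lambda',\kappa))\,d\kappa\, \int \hat B(\cdot)\,\mathcal{L}(X^{\kappa})(dw)\big|$ plus the modulus of continuity of $B_0, F_0, Q_0$ in $\lambda$, and using the $L^p$-boundedness of the solution (from Section~2, valid since $p \ge 2$) this is $\lesssim \int_{[0,1]}|G(\lambda,\kappa)-G(\lambda',\kappa)|\,d\kappa$ (or its square, after Cauchy–Schwarz).

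The remaining mechanics are exactly the forward–backward estimates already executed in Theorem~\ref{thm3.1} (under Assumption~\ref{assume1}, via the contraction $\Phi\circ\Psi$ and the norms $\|\cdot\|_k$) and in Theorem~\ref{thm3.2} (under Assumption~\ref{assume2}, via the method of continuation and the map $\Pi^{\zeta}$): one applies It\^o to $e^{kt}|\Delta X_t|^2$ and to $e^{kt}|\Delta Y_t|^2$ (resp. to $\Delta X_t\,\Delta Y_t$), uses the monotonicity/Lipschitz bounds to absorb (a) and the $\W_p(\bs{\eta}^{\lambda},\bs{\eta}^{\lambda'})$ half of (b), carries (c) as an inhomogeneous term, and closes the loop to get $\mathbb{E}[\sup_t|\Delta X_t|^2] + \mathbb{E}[\sup_t|\Delta Y_t|^2] + \mathbb{E}\int_0^T|\Delta Z_t|^2\,dt \le C\big(\W_2^2(\mathcal{L}(X^{\lambda}_0),\mathcal{L}(X^{\lambda'}_0)) + \int_{[0,1]}|G(\lambda,\kappa)-G(\lambda',\kappa)|^2\,d\kappa\big)$; the final $\sup$-in-time bound comes from a Burkholder–Davis–Gundy argument as in Step~4 of those theorems. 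Since $\W_{2,T}(\mathcal{L}(X^{\lambda},Y^{\lambda}),\mathcal{L}(X^{\lambda'},Y^{\lambda'}))^2$ is at most the left side of this inequality (using the synchronous coupling), (Lipschitz) continuity of $G$ in its first variable and of $\lambda\mapsto\mathcal{L}(X^{\lambda}_0)$ yields the claim, and plain continuity follows by letting $\lambda'\to\lambda$.

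The main obstacle I anticipate is purely bookkeeping rather than conceptual: one must verify that the "mean-field" contributions (the $\int_{[0,1]}\W_p\,d\kappa$ terms in Assumptions~\ref{assume1}(iii)/\ref{assume2}(i)) genuinely cancel between types $\lambda$ and $\lambda'$ — they do, because those integrals do not depend on the outer index — and that no $L^p$-versus-$L^2$ mismatch arises when invoking the moment bounds; everything else is a verbatim repeat of Theorems~\ref{thm3.1} and~\ref{thm3.2} with "$G$ vs $\widetilde G$" replaced by "$G(\lambda,\cdot)$ vs $G(\lambda',\cdot)$" and the cut-norm replaced by the $L^1$- (or $L^2$-) norm in the second variable, which is why the authors can legitimately say the proof "follows from standard coupling arguments similar to \cite[Theorem 2.1]{2020arXiv200313180B}."
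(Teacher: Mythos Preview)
Your proposal is correct and is exactly the ``standard coupling argument'' the paper points to: couple $(X^{\lambda},Y^{\lambda})$ and $(X^{\lambda'},Y^{\lambda'})$ on the same Brownian motion with $\W_2$-optimally coupled initial data, freeze the common measure family $\cL^m(X_\cdot)$ so that the $\int_{[0,1]}\W_p\,d\kappa$ contribution vanishes, and bound the residual $B^{\lambda}_G-B^{\lambda'}_G$, $F^{\lambda}_G-F^{\lambda'}_G$, $Q^{\lambda}_G-Q^{\lambda'}_G$ via Assumption~\ref{assume3} by $\int_{[0,1]}|G(\lambda,\kappa)-G(\lambda',\kappa)|\,d\kappa$ plus $\W_p(\cL(X^{\lambda}_t),\cL(X^{\lambda'}_t))$, the latter being absorbed into the forward estimate. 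The only cosmetic point is that the ``cancellation'' you mention in~(b) is really that $\bs{\eta}=\widetilde{\bs{\eta}}$ here (same solution), not that the integral is index-free; with that clarification your It\^o/BDG closure mirrors Theorems~\ref{thm3.1}--\ref{thm3.2} and Propositions~\ref{prop3.1}--\ref{prop3.2} verbatim, which is precisely what the paper intends by citing \cite[Theorem~2.1]{2020arXiv200313180B}.
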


\section{Propagation of Chaos}




Consider step graphon $G_n$ such that $\|G_n-G\|_\square \to 0$ as $n \to \infty$, and
the following coupled systems of FBSDEs
\begin{align}\label{eq:sec4particle}
\begin{cases}
	dX_t^{i,n}  = B_0(t,X_t^{i,n},Y_t^{i,n})\,dt + \frac{1}{n} \sum_{j=1}^n G_n(\frac{i}{n},\frac{j}{n}) \h{B}(t,X_t^{i,n},X_t^{j,n},Y_t^{i,n})\,dt + \sigma\,dW_t^{i/n}, \\
	dY_t^{i,n}  = - F_0(t,X_t^{i,n},Y_t^{i,n})\,dt -\frac{1}{n} \sum_{j=1}^n G_n(\frac{i}{n},\frac{j}{n}) \h{F}(t,X_t^{i,n},X_t^{j,n},Y_t^{i,n})\,dt + \sum_{j=1}^n Z_t^{i,j,n}\,dW_t^{j/n}, \\
	X_0^{i,n}  = \xi^{i/n}, \\
	Y_T^{i,n}  = Q_0(X_T^{i,n}) + \frac{1}{n} \sum_{j=1}^n G_n(\frac{i}{n},\frac{j}{n}) \hat{Q}(X_T^{i,n},X_T^{j,n}),  \quad i=1, \dotso, n,
\end{cases}
\end{align}
and the following limiting system
\begin{align}\label{eq:sec4limit}
\begin{cases}
dX^{\lambda}_t = B_0(t,X^{\lambda}_t,Y^{\lambda}_t)\,dt + \int_0^1 \int_\R G(\lambda,\kappa) \h{B}\left(t,X^{\lambda}_t,x,Y^{\ld}_t \right) \cL(X_t^\kappa)(dx) \,d\kappa \, dt + \sigma \, d W^{\lambda}_t, \\
dY^{\lambda}_t = - F_0(t,X^{\lambda}_t,Y^{\lambda}_t)\,dt -\int_0^1 \int_\R G(\lambda,\kappa) \h{F}\left(t,X^{\lambda}_t,x,Y^{\ld}_t \right) \cL(X_t^\kappa)(dx) \,d\kappa \, dt +Z^{\lambda}_t \, d W^{\lambda}_t, \\
 X^{\lambda}_0 =\xi^{\lambda}, \\
 Y^{\lambda}_T = Q_0(X_T^{\lambda}) + \int_0^1 \int_{\R} G(\lambda,\kappa) \hat{Q}(X_T^{\lambda},x)\,\cL(X_T^{\kappa})(dx)\,d\kappa, \quad \ld \in [0,1].
\end{cases}
\end{align}
We will prove that solutions of \eqref{eq:sec4particle} converge to that of \eqref{eq:sec4limit}.

%

\subsection{Contraction mapping}

The following assumption summarizes Assumptions \ref{assume1} and \ref{assume3}.

\begin{assume}\label{assume:POC-1}
(i) $B_0$ is Lipschitz in $x$, and there exists a constant $K_1 \in \R$ such that for any $(t,x,x',y) \in [0,T] \times \R^3$ 
\begin{align*}
(x-x') \cdot \left( B_0(t,x,y) -B_0(t,x',y) \right) \leq -K_1 (x-x')^2.
\end{align*}
$\hat{B}$ is $L_1$-Lipschitz in $x,x'$.\\
(ii) $F_0$ is Lipschitz in $y$, and there exists a constant $K_2 \in \R $ such that for any $(t,x,y,y') \in [0,T] \times \R^3$
\begin{align*}
(y-y') \cdot \left( F_0(t,x,y) -F_0(t,x,y') \right) \leq -K_2 (y-y')^2
\end{align*} 
$\hat{F}$ is $L_2$-Lipschitz in $x,x',y$.\\
(iii) $Q_0$, $\hat{Q}$ are Lipschitz.\\
(iv) $B_0, \hat{B}$ are bounded in $y$.\\
(v) It holds that $pK_1 +p K_2> (2p-1)L_1+(2p-2)L_2$ and there exists a constant $k \in ((2p-2)L_2-pK_2 ,p K_1-(2p-1)L_1 )$ such that 
\begin{align}\label{eq:contraction}
& (k+pK_2-(2p-2)L_2)  > \left(2^pL_1L_3^p+\frac{2^{p-1}L_1^2L_3^p+2L_1L_2}{-k+pK_1-(2p-1)L_1} \right).\end{align}
(vi) It holds that $\sup_{\ld \in [0,1]} \E[|\xi^{\ld}|^p] < +\infty$.\\
(vii) $\lambda \mapsto \mathcal{L}(\xi^\lambda)$ is continuous with respect to $\W_{2}$.
\end{assume}

We introduce the following notation that will be used in this section.
Given any measurable $y = (y^{i,n})_{i=1}^n \in \cM L^{p,c}_{\mathcal{F}}$, we define $\widehat{\Psi}_{G_n}(y):=x = (x^{i,n})_{i=1}^n$ as the unique solution to
\begin{align*}
\begin{cases}
	dx_t^{i,n}  = B_0(t,x_t^{i,n},y_t^{i,n})\,dt + \frac{1}{n} \sum_{j=1}^n G_n(\frac{i}{n},\frac{j}{n}) \h{B}(t,x_t^{i,n},x_t^{j,n},y_t^{i,n})\,dt + \sigma\,dW_t^{i/n}, \\
	x_0^{i,n}  = \xi^{i/n}, \quad i =1, \dotso,n,
\end{cases}	
\end{align*}
and define $\wt{\Psi}_{G_n}(y):=x= (x^{i,n})_{i=1}^n$ as the unique solution to
\begin{align*}
\begin{cases}
	dx_t^{i,n}  = B_0(t,x_t^{i,n},y_t^{i,n})\,dt + \frac{1}{n} \sum_{j=1}^n \int_\R G_n(\frac{i}{n},\frac{j}{n}) \h{B}(t,x_t^{i,n},x,y_t^{i,n})\cL(x_t^{j,n})(dx) \,dt + \sigma\,dW_t^{i/n}, \\
	x_0^{i,n}  = \xi^{i/n}, \quad i=1,\dotso, n.
\end{cases}
\end{align*}
For any $x = (x^{i,n})_{i=1}^n \in \cM L^{p,c}_{\mathcal{F}}$, define $\widehat{\Phi}_{G_n}(x):=y= (y^{i,n})_{i=1}^n$ to be the unique solution to backward stochastic equations
\begin{align*}
\begin{cases}
	dy_t^{i,n}  = -F_0(t,x_t^{i,n},y_t^{i,n})\,dt -\frac{1}{n} \sum_{j=1}^n G_n(\frac{i}{n},\frac{j}{n}) \h{F}(t,x_t^{i,n},x_t^{j,n},y_t^{i,n})\,dt + \sum_{j=1}^n Z_t^{i,j,n}\,dW_t^{j/n}, \\
	y_T^{i,n}  = Q_0(x_T^{i,n}) + \frac{1}{n} \sum_{j=1}^n G_n(\frac{i}{n},\frac{j}{n}) \hat{Q}(x_T^{i,n},x_T^{j,n}), \quad i=1,\dotso, n,
\end{cases}
\end{align*}
and define $\wt{\Phi}_{G_n}(x):=y= (y^{i,n})_{i=1}^n$ to be the unique solution to backward stochastic equations
\begin{align*}
\begin{cases}
	dy_t^{i,n}  = -F_0(t,x_t^{i,n},y_t^{i,n})\,dt -\frac{1}{n} \sum_{j=1}^n \int_\R G_n(\frac{i}{n},\frac{j}{n}) \h{F}(t,x_t^{i,n},x,y_t^{i,n})\cL(x_t^{j,n})(dx)\,dt + Z_t^{i,n}\,dW_t^{i/n}, \\
	y_T^{i,n}  = Q_0(x_T^{i,n}) + \frac{1}{n} \sum_{j=1}^n \int_\R G_n(\frac{i}{n},\frac{j}{n}) \hat{Q}(x_T^{i,n},x)\cL(x_t^{j,n})(dx), \quad i=1,\dotso, n.
\end{cases}
\end{align*}
We note that $\wt{\Psi}_{G_n}$ and $\wt{\Phi}_{G_n}$ are simply the maps $\Psi$ and $\Phi$ with blockwise constant graphon $G_n$ and associated piecewise constant initial states $(\xi^{\lceil n\lambda \rceil})_{\lambda \in [0,1]}$.

Let $(\wt{X}^n,\wt{Y}^n,\wt{Z}^n)$ be the unique solution of the limiting system with graphon $G_n$ and initial states $(\xi^{\lceil n\lambda \rceil})_{\lambda \in [0,1]}$.
Abusing notations, we write $\wt{X}^n = (\wt{X}^{i,n})_{i=1}^n = (\wt{X}^{n,\lambda})_{\lambda \in [0,1]}$, $\wt{Y}^n = (\wt{Y}^{i,n})_{i=1}^n = (\wt{Y}^{n,\lambda})_{\lambda \in [0,1]}$ and $\wt{Z}^n = (\wt{Z}^{i,n})_{i=1}^n = (\wt{Z}^{n,\lambda})_{\lambda \in [0,1]}$.
Note that $Y^n$ and $\wt{Y}^n$ are the fix point of $\widehat{\Gamma}_{G_n}:=\widehat{\Phi}_{G_n}\circ\widehat{\Psi}_{G_n}$ and $\widetilde{\Gamma}_{G_n}:=\wt{\Phi}_{G_n}\circ\wt{\Psi}_{G_n}$ respectively.

\begin{thm} \label{thm4.1}
	
	Suppose Assumption~\ref{assume:POC-1} holds and $\|G_n - G\|_\square \to 0$. 
	Then
	\begin{align}\label{eq:POC-0}
		& \E\left[ \int_0^1  \left(  \sup_{t \in [0,T]} |X_t^{\lceil n\lambda \rceil,n}-X_t^{\lambda}|^2 + \sup_{t \in [0,T]} |Y_t^{\lceil n\lambda \rceil,n}-Y_t^{\lambda}|^2 \, ds \right. \right. \\
		& \qquad \left. \left. + \int_0^T |Z^{\lceil n\lambda \rceil,\lceil n\lambda \rceil,n}_t -Z^{\lambda}_t|^2\,dt \right) \, d \ld \right] \to 0. \notag
	\end{align}	
	If in addition $G$ is continuous, then
	\begin{align}
		\frac{1}{n} \sum_{i=1}^n \E \sup_{0 \le t \le T} \left[ |X_t^{i,n}-X_t^{i/n}|^2 + |Y_t^{i,n}-Y_t^{i/n}|^2\right] \,dt & \to 0, \label{eq:POC-1} \\
		\sup_{t \in [0,T]} \E \left[ \W_2^2(\nu^n_t,\nu_t) \right] \,dt & \to 0, \label{eq:POC-2}
	\end{align}		
	where $\nu_t^n = \frac{1}{n} \sum_{i=1}^n \delta_{(X_t^{i,n},Y_t^{i,n})}$ and $\nu_t = \int_0^1 \cL(X_t^\lambda,Y_t^\lambda)\,d\lambda$.
\end{thm}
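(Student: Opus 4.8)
The plan is to interpolate between the $n$-particle system \eqref{eq:sec4particle} and the limiting system \eqref{eq:sec4limit} through the decoupled graphon system $(\wt{X}^n,\wt{Y}^n,\wt{Z}^n)$ introduced above, namely the solution of \eqref{eq:sec4limit} with the step graphon $G_n$ and the piecewise constant initial data $(\xi^{\lceil n\lambda\rceil})_{\lambda\in[0,1]}$. By the triangle inequality it suffices to bound, in $L^2$, (a) the distance between $(\wt{X}^n,\wt{Y}^n,\wt{Z}^n)$ and the true limit $(X^\lambda,Y^\lambda,Z^\lambda)$, which is a pure stability statement, and (b) the distance between the $n$-particle system and $(\wt{X}^n,\wt{Y}^n,\wt{Z}^n)$, which is the genuine propagation-of-chaos step at the fixed step graphon $G_n$. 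Throughout I couple the systems so that particle $i$, type $i/n$, and the $i$-th coordinate of the decoupled system all run on $W^{i/n}$, so that all the comparisons below are pathwise.

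For (a): Assumption~\ref{assume:POC-1} packages Assumptions~\ref{assume1} and \ref{assume3}; $\lVert G_n-G\rVert_\square\to0$ by hypothesis; and since $\lambda\mapsto\mathcal{L}(\xi^\lambda)$ is continuous with $|\lceil n\lambda\rceil/n-\lambda|\le1/n$, dominated convergence yields $\int_0^1\W_2^2(\mathcal{L}(\xi^{\lceil n\lambda\rceil}),\mathcal{L}(\xi^\lambda))\,d\lambda\to0$. Theorem~\ref{thm3.1} (or, for a rate, Proposition~\ref{prop3.1}; under Assumption~\ref{assume2} one would instead use Theorem~\ref{thm3.2}/Proposition~\ref{prop3.2}) then gives $\E\int_0^1\big(\sup_{t}|\wt{X}^{n,\lambda}_t-X^\lambda_t|^2+\sup_{t}|\wt{Y}^{n,\lambda}_t-Y^\lambda_t|^2+\int_0^T|\wt{Z}^{n,\lambda}_s-Z^\lambda_s|^2\,ds\big)\,d\lambda\to0$.

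For (b): first, rerunning the contraction computation of Theorem~\ref{thm:contraction} with the empirical interaction $\frac1n\sum_j G_n(\tfrac in,\tfrac jn)\h{B}(\cdot)$ in place of the measure-dependent drift $B^\lambda$ — the monotonicity in $x$ is unchanged, and the Lipschitz-in-measure bound becomes a bound by $\frac{\lVert G\rVert_\infty}{n}\sum_j(\cdot)$, which behaves identically once one takes $\max_{1\le i\le n}$ — one checks that $\widehat{\Gamma}_{G_n}=\widehat{\Phi}_{G_n}\circ\widehat{\Psi}_{G_n}$ is a contraction in $\lVert\cdot\rVert^p_k$ with modulus $\theta<1$ independent of $n$, while $\wt{\Gamma}_{G_n}$ is a contraction with the same modulus by Theorem~\ref{thm:contraction} applied to the block-constant data. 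Since $Y^n$ and $\wt{Y}^n$ are the fixed points of $\widehat{\Gamma}_{G_n}$ and $\wt{\Gamma}_{G_n}$, the bound $\lVert Y^n-\wt{Y}^n\rVert^p_k\le\theta\lVert Y^n-\wt{Y}^n\rVert^p_k+\lVert\widehat{\Gamma}_{G_n}(\wt{Y}^n)-\wt{\Gamma}_{G_n}(\wt{Y}^n)\rVert^p_k$ reduces everything to showing $\lVert\widehat{\Gamma}_{G_n}(\wt{Y}^n)-\wt{\Gamma}_{G_n}(\wt{Y}^n)\rVert^p_k\to0$. Feeding the common input $\wt{Y}^n$ into $\widehat{\Psi}_{G_n}$ and into $\wt{\Psi}_{G_n}(\wt{Y}^n)=\wt{X}^n$ and subtracting, the drift difference splits into a Lipschitz part (absorbed by a Gr\"onwall/monotonicity estimate against the difference itself, using the constant $K_1$) and the fluctuation term $M^{i,n}_t:=\frac1n\sum_{j=1}^n G_n(\tfrac in,\tfrac jn)\big(\h{B}(t,\wt{X}^{i,n}_t,\wt{X}^{j,n}_t,\wt{Y}^{i,n}_t)-\int\h{B}(t,\wt{X}^{i,n}_t,x,\wt{Y}^{i,n}_t)\,\mathcal{L}(\wt{X}^{j,n}_t)(dx)\big)$; the decisive point is that the coordinates of the decoupled system are mutually independent, so conditioning on $\sigma(W^{i/n},\xi^{i/n})$ the summands with $j\ne i$ are independent and centred, and the Lipschitz bound on $\h{B}$ together with the uniform-in-$n$ moment bounds (Lemma~\ref{lem:momentbound}) gives $\sup_t\E|M^{i,n}_t|^2\le C/n$ uniformly in $i$. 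The backward map is treated the same way with fluctuation terms built from $\h{F}$ and $\h{Q}$. Hence $\lVert\widehat{\Gamma}_{G_n}(\wt{Y}^n)-\wt{\Gamma}_{G_n}(\wt{Y}^n)\rVert^p_k\le C/\sqrt n$, so $\lVert Y^n-\wt{Y}^n\rVert^p_k\le C/((1-\theta)\sqrt n)$, and standard forward/backward a priori estimates propagate this to $\frac1n\sum_{i=1}^n\E\big[\sup_t|X^{i,n}_t-\wt{X}^{i,n}_t|^2+\sup_t|Y^{i,n}_t-\wt{Y}^{i,n}_t|^2+\int_0^T|Z^{i,n}_t-\wt{Z}^{i,n}_t|^2\,dt\big]\to0$.

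Finally, \eqref{eq:POC-0} follows by adding (a) and (b) and using $\int_0^1 f(\lceil n\lambda\rceil)\,d\lambda=\frac1n\sum_{i=1}^n f(i)$. When $G$ is continuous, Proposition~\ref{prop-continuity} gives continuity of $\lambda\mapsto\mathcal{L}(X^\lambda,Y^\lambda)$ for $\W_{2,T}$; comparing $\wt{X}^{i,n}$ with $X^{i/n}$ (both solving the limiting system on $W^{i/n}$ with initial datum $\xi^{i/n}$, graphons $G_n$ and $G$ respectively) via Proposition~\ref{prop3.1} shows $\frac1n\sum_i\E[\sup_t|\wt{X}^{i,n}_t-X^{i/n}_t|^2+\cdots]\to0$, so a triangle inequality with (b) yields \eqref{eq:POC-1}. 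For \eqref{eq:POC-2}, bound $\W_2^2(\nu^n_t,\nu_t)\le 2\W_2^2\big(\nu^n_t,\tfrac1n\sum_i\delta_{(X^{i/n}_t,Y^{i/n}_t)}\big)+2\W_2^2\big(\tfrac1n\sum_i\delta_{(X^{i/n}_t,Y^{i/n}_t)},\nu_t\big)$; the expectation of the first term is $\le\tfrac2n\sum_i\E|(X^{i,n}_t,Y^{i,n}_t)-(X^{i/n}_t,Y^{i/n}_t)|^2\to0$ by \eqref{eq:POC-1}, while the second is handled by a law of large numbers for the independent (non-identically-distributed) family $(X^{i/n}_t,Y^{i/n}_t)_i$, noting that $\tfrac1n\sum_i\mathcal{L}(X^{i/n}_t,Y^{i/n}_t)$ is a Riemann sum converging to $\nu_t=\int_0^1\mathcal{L}(X^\lambda_t,Y^\lambda_t)\,d\lambda$ by the same continuity; the uniform-in-$t\in[0,T]$ moment and modulus-of-continuity bounds give the claimed uniformity. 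The main obstacle is step (b): the combined forward--backward Gr\"onwall/contraction estimate together with the conditional-variance control of $M^{i,n}$ — because the particles of \eqref{eq:sec4particle} are genuinely coupled and not exchangeable, the fluctuation estimate cannot be run directly on them, but must be carried out on the decoupled system and then transferred through the uniform-in-$n$ contraction of $\widehat{\Gamma}_{G_n}$.
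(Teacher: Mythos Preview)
Your proof follows essentially the same architecture as the paper: split into the stability comparison $(\wt X^n,\wt Y^n,\wt Z^n)$ versus $(X^\lambda,Y^\lambda,Z^\lambda)$ via Theorem~\ref{thm3.1}, and the propagation-of-chaos step $(X^{i,n},Y^{i,n})$ versus $(\wt X^{i,n},\wt Y^{i,n})$ via a contraction/fluctuation argument; the treatment of \eqref{eq:POC-1} and \eqref{eq:POC-2} is also the same in substance.

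One tactical difference is worth recording. The paper uses $\wt\Gamma_{G_n}$ as the contraction and feeds the coupled fixed point $Y^n$ into both maps, so the fluctuation term is built from $\tilde x^n=\wt\Psi_{G_n}(Y^n)$; you instead use $\widehat\Gamma_{G_n}$ as the contraction and feed the \emph{decoupled} fixed point $\wt Y^n$. Your choice is cleaner: the summands in $M^{i,n}_t$ then involve only $(\wt X^{i,n},\wt X^{j,n},\wt Y^{i,n})$, which are genuinely $\sigma(W^{j/n},\xi^{j/n})$-measurable, so the conditional-independence/centering argument that yields the $C/n$ variance bound is immediate. With the paper's choice the inputs $Y^{j,n}$ depend on all Brownian motions, so the cross-term cancellation is less transparent.

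Two small points. First, in your derivation of \eqref{eq:POC-1} you invoke Proposition~\ref{prop3.1}, which bounds things by $\lVert G_n-G\rVert_2$; under the hypothesis $\lVert G_n-G\rVert_\square\to0$ alone this is not available. The correct route (and the paper's) is to combine \eqref{eq:POC-0} with the continuity of $\lambda\mapsto\mathcal L(X^\lambda,Y^\lambda)$ from Proposition~\ref{prop-continuity}, which you do cite, to pass from $X^\lambda$ to $X^{\lceil n\lambda\rceil/n}$. Second, for \eqref{eq:POC-2} the paper inserts the intermediate deterministic measure $\E\bar\nu^n_t=\tfrac1n\sum_i\mathcal L(X^{i/n}_t,Y^{i/n}_t)$ and controls $\W_2(\bar\nu^n_t,\E\bar\nu^n_t)$ by a quantitative LLN for independent non-i.i.d.\ samples; your two-term split works but that extra step is where the moment bound in $p>2$ is actually used.
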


\begin{proof}
%
	\textit{Step 1:}
	Take $y=(y_{\frac{k}{n}}:k=1,\dotsc,n)$, and denote 
	$x^n=\widehat{\Psi}_{G_n}(y)$, 
	$\tilde{x}^n=\wt{\Psi}_{G_n}(y)$, 
	$y^n=\widehat{\Phi}_{G_n}(x^n)$, 
	$\tilde{y}^n=\wt{\Phi}_{G_n}(\tilde{x}^n)$.
	By It\^o's formula,  
	\begin{align*}
	& e^{k t} |x_t^{i,n}-\tilde{x}_t^{i,n}|^2 =k \int_0^t e^{k s} |x_s^{i,n}-\tilde{x}_s^{i,n}|^2 \, ds \\
	& + 2  \int_0^t e^{k s } \left(x_s^{i,n}-\tilde{x}_s^{i,n}\right) \cdot \left(B_0(s,x_s^{i,n},y_s^{i,n}) - B_0(s,\tilde{x}_s^{i,n},y_s^{i,n})\right) ds \\
	& + 2  \int_0^t e^{k s } \left(x_s^{i,n}-\tilde{x}_s^{i,n}\right) \cdot \frac{1}{n} \sum_{j=1}^n G_n(\frac{i}{n},\frac{j}{n}) \left( \h{B}(s,x_s^{i,n},x_s^{j,n},y_s^{i,n}) - \h{B}(s,\tilde{x}_s^{i,n},\tilde{x}_s^{j,n},y_s^{i,n}) \right) ds \\
	& + 2  \int_0^t e^{k s } \left(x_s^{i,n}-\tilde{x}_s^{i,n}\right) \cdot \frac{1}{n} \sum_{j=1}^n G_n(\frac{i}{n},\frac{j}{n}) \left( \h{B}(s,\tilde{x}_s^{i,n},\tilde{x}_s^{j,n},y_s^{i,n}) - \int_\R \h{B}\left(s,\tilde{x}_s^{i,n},x,y_s^{i,n} \right) \cL(\tilde{x}_s^{j,n})(dx) \right) ds \\
	& \leq (k-2K_1+3L_1+\e) \int_0^t e^{k s} |x^{i,n}_s -\wt{x}^{i,n}_s |^2 \, ds + L_1\frac{1}{n} \sum_{j=1}^n \int_0^t e^{k s} |x^{j,n}_s -\wt{x}^{j,n}_s |^2 \, ds \\
	& \quad + \frac{1}{\e}   \int_0^t  e^{ks} \left( \frac{1}{n} \sum_{j=1}^n G_n(\frac{i}{n},\frac{j}{n}) \left( \h{B}(s,\tilde{x}_s^{i,n},\tilde{x}_s^{j,n},y_s^{i,n}) - \int_\R \h{B}\left(s,\tilde{x}_s^{i,n},x,y_s^{i,n} \right) \cL(\tilde{x}_s^{j,n})(dx) \right) \right)^2 ds.
	\end{align*}
	Taking expectations and the average over $i$, we have
	\begin{align*}
		& \frac{1}{n}\sum_{i=1}^n \E\left[e^{k t} |x_t^{i,n}-\tilde{x}_t^{i,n}|^2\right] \\
		& \le (k-2K_1+4L_1+\e) \int_0^t \frac{1}{n}\sum_{i=1}^n \E\left[e^{k s} |x^{i,n}_s -\wt{x}^{i,n}_s |^2 \right] ds \\
		& + \frac{1}{\e} \int_0^t  e^{ks} \frac{1}{n}\sum_{i=1}^n \E \left( \frac{1}{n} \sum_{j=1}^n G_n(\frac{i}{n},\frac{j}{n}) \left( \h{B}(s,\tilde{x}_s^{i,n},\tilde{x}_s^{j,n},y_s^{i,n}) - \int_\R \h{B}\left(s,\tilde{x}_s^{i,n},x,y_s^{i,n} \right) \cL(\tilde{x}_s^{j,n})(dx) \right) \right)^2 ds.
	\end{align*}
	For the last line, we have the estimation
	\begin{align*}
	&\frac{1}{\e} \int_0^t  e^{ks} \E \left( \frac{1}{n} \sum_{j=1}^n G_n(\frac{i}{n},\frac{j}{n}) \left( \h{B}(s,\tilde{x}_s^{i,n},\tilde{x}_s^{j,n},y_s^{i,n}) - \int_\R \h{B}\left(s,\tilde{x}_s^{i,n},x,y_s^{i,n} \right) \cL(\tilde{x}_s^{j,n})(dx) \right) \right)^2 ds \\
	& = \frac{1}{\e} \int_0^t  e^{ks} \frac{1}{n^2} \sum_{j=1}^n G_n^2(\frac{i}{n},\frac{j}{n}) \E \left( \h{B}(s,\tilde{x}_s^{i,n},\tilde{x}_s^{j,n},y_s^{i,n}) - \int_\R \h{B}\left(s,\tilde{x}_s^{i,n},x,y_s^{i,n} \right) \cL(\tilde{x}_s^{j,n})(dx) \right)^2 ds \\
	&\leq \frac{C}{n},
	\end{align*}
	due to the boundedness of $\E[\sup_{s \in [0,T]}|\tilde{x}_s^{i,n}|^2]$ and Lipschitz property of $\hat{B}$. 
	Therefore
	\begin{equation}\label{eq:poc1}
		\frac{1}{n}\sum_{i=1}^n \sup_{t \in [0,T]} \E\left[e^{k t} |x_t^{i,n}-\tilde{x}_t^{i,n}|^2\right] \le \frac{C}{n}.
	\end{equation}
	
	\textit{Step 2:}
	Then let us estimate $y^n-\wt{y}^n$. From the equation
	\begin{align*}
	& e^{k T} \E\left|y_T^{i,n}-\wt{y}_T^{i,n}\right|^2 \\
	& = e^{k t} \E|y^{i,n}_t -\wt{y}^{i,n}_t|^2 + k \E\int_t^T e^{k s} |y^{i,n}_s -\wt{y}^{i,n}_s|^2 \, ds+ \sum_{j=1}^n \E\int_t^T e^{k s} |Z^{i,j,n}_s -\delta_{ij}\wt{Z}^{i,n}_s|^2 \, ds\\
	&  -2 \E\int_t^T e^{k s} \left(y^{i,n}_s-\wt{y}^{i,n}_s \right) \cdot \left(F_0(s,x^{i,n}_s,y^{i,n}_s)-F_0(s,\wt{x}^{i,n}_s,\wt{y}^{i,n}_s) \right) ds \\
	& -2 \E\int_t^T e^{k s} \left(y^{i,n}_s-\wt{y}^{i,n}_s \right) \cdot \frac{1}{n} \sum_{j=1}^n G_n(\frac{i}{n},\frac{j}{n}) \left(\hat{F}(s, x^{i,n}_s, x^{j,n}_s, y^{i,n}_s) - \hat{F}(s, \wt{x}^{i,n}_s, \wt{x}^{j,n}_s, \wt{y}^{i,n}_s) \right) ds \\
	& -2 \E\int_t^T e^{k s} \left(y^{i,n}_s-\wt{y}^{i,n}_s \right) \cdot \frac{1}{n} \sum_{j=1}^n G_n(\frac{i}{n},\frac{j}{n}) \left(\hat{F}(s, \wt{x}^{i,n}_s, \wt{x}^{j,n}_s, \wt{y}^{i,n}_s) - \int_\R \hat{F}(s, \wt{x}^{i,n}_s, x, \wt{y}^{i,n}_s)\,\cL(\tilde{x}_s^{j,n})(dx) \right) ds,
	\end{align*}
	it can be easily seen that 
	\begin{align} \label{eq:poc-pf1}
	& e^{k t} \E|y^{i,n}_t -\wt{y}^{i,n}_t|^2 + k \E\int_t^T e^{k s} |y^{i,n}_s -\wt{y}^{i,n}_s|^2 \, ds+ \sum_{j=1}^n \E\int_t^T e^{k s} |Z^{i,j,n}_s -\delta_{ij}\wt{Z}^{i,n}_s|^2 \, ds \\ 
	& \leq e^{k T} \E\left|y_T^{i,n}-\wt{y}_T^{i,n}\right|^2 + (4L_2-2K_2+\e)\E\int_t^T e^{k s} |y^{i,n}_s -\wt{y}^{i,n}_s|^2 \, ds \notag \\
	& \ \ \ + L_2 \E\int_t^T e^{ks} |x^{i,n}_s-\wt{x}^{i,n}_s|^2 \, ds +L_2 \frac{1}{n}\sum_{j=1}^n\E\int_t^T e^{ks} |x^{j,n}_s-\wt{x}^{j,n}_s|^2 \, ds \notag \\
	& \ \ \ +\frac{1}{\e} \E\int_t^T e^{ks} \left( \frac{1}{n} \sum_{j=1}^n G_n(\frac{i}{n},\frac{j}{n}) \left(\hat{F}(s, \wt{x}^{i,n}_s, \wt{x}^{j,n}_s, \wt{y}^{i,n}_s) - \int_\R \hat{F}(s, \wt{x}^{i,n}_s, x, \wt{y}^{i,n}_s)\,\cL(\tilde{x}_s^{j,n})(dx) \right) \right)^2 ds. \notag
	\end{align}
	Noting that 
	\begin{equation*}
		\frac{1}{\e} \E\int_t^T e^{ks} \left( \frac{1}{n} \sum_{j=1}^n G_n(\frac{i}{n},\frac{j}{n}) \left(\hat{F}(s, \wt{x}^{i,n}_s, \wt{x}^{j,n}_s, \wt{y}^{i,n}_s) - \int_\R \hat{F}(s, \wt{x}^{i,n}_s, x, \wt{y}^{i,n}_s)\,\cL(\tilde{x}_s^{j,n})(dx) \right) \right)^2 ds \le \frac{C}{n},
	\end{equation*}
	and also
	\begin{align*}
	& e^{k T} \E\left|y_T^{i,n}-\wt{y}_T^{i,n}\right|^2 \leq C   \E\left(|x^{i,n}_T-\wt{x}^{i,n}_T|^2 + \frac{1}{n}\sum_{j=1}^n|x^{i,n}_T-\wt{x}^{i,n}_T|^2 + \frac{1}{n} \right).
	\end{align*}
	Therefore we conclude from \eqref{eq:poc1} that 
	\begin{align*}
	& (k+2K_2-4L_2-\e)  \E \left[ \int_0^T \frac{1}{n} \sum_{i=1}^n e^{ks} |y^{i,n}_s -\wt{y}^{i,n}_s|^2  \, ds \right] \leq \frac{C}{n}.
	\end{align*}

	\textit{Step 3:}
	Recall the processes $Y^n$, $\wt{Y}^n$ and $\wt{Z}^n$.
	Then it is readily seen that 
	\begin{align*}
	\lVert Y^n-\wt{Y}^n \rVert^I_k =&\lVert \hat{\Gamma}_{G_n}(Y^n)-\tilde{\Gamma}_{G_n}(\wt{Y}^n)  \rVert^I_k \leq \lVert \hat{\Gamma}_{G_n}(Y^n)-\tilde{\Gamma}_{G_n}(Y^n)  \rVert^I_k +\lVert \tilde{\Gamma}_{G_n}(Y^n)-\tilde{\Gamma}_{G_n}(\wt{Y}^n)  \rVert^I_k \\
	\leq & \lVert \hat{\Gamma}_{G_n}(Y^n)-\tilde{\Gamma}_{G_n}(Y^n)  \rVert^I_k +\theta \lVert Y^n-\wt{Y}^n \rVert^I_k,
	\end{align*}
	and hence as $n \to \infty$,
	\begin{align*}
	\lVert Y^n-\wt{Y}^n \rVert^I_k \leq \frac{1}{1-\theta}\lVert \hat{\Gamma}_{G_n}(Y^n)-\tilde{\Gamma}_{G_n}(Y^n)  \rVert^I_k \le \frac{C}{n} \to 0.
	\end{align*}
	Similar to the derivation of \eqref{eq:poc1}, one can easily obtain that
	\begin{equation*}
		\frac{1}{n}\sum_{i=1}^n \sup_{t \in [0,T]} \E\left[ e^{k t} |X_t^{i,n}-\wt{X}_t^{i,n}|^2 \right] \le C\lVert Y^n-\wt{Y}^n \rVert^I_k + \frac{C}{n} \to 0.
	\end{equation*}	
	Combining these with \eqref{eq:poc-pf1} we have
	\begin{align*}
		& \frac{1}{n}\sum_{i=1}^n \left( \sup_{t \in [0,T]} \E |X_t^{i,n}-\wt{X}_t^{i,n}|^2 + \int_0^T \E |Y_t^{i,n}-\wt{Y}_t^{i,n}|^2\,dt + \sum_{j=1}^n \int_0^T \E |Z^{i,j,n}_t -\delta_{ij}\wt{Z}^{i,n}_t|^2\,dt \right) \\
		& \le \frac{C}{n} \to 0.
	\end{align*}	
	
	\textit{Step 4:} 
	Similar to the arguments of \textit{Step 4} in Theorem \ref{thm3.1}, we have
	\begin{align} \label{eq:POC-pf2}
		& \E \left[ \frac{1}{n}\sum_{i=1}^n \left( \sup_{t \in [0,T]} |X_t^{i,n}-\wt{X}_t^{i,n}|^2 + \sup_{t \in [0,T]} |Y_t^{i,n}-\wt{Y}_t^{i,n}|^2 + \sum_{j=1}^n \int_0^T |Z^{i,j,n}_t -\delta_{ij}\wt{Z}^{i,n}_t|^2\,dt \right) \right] \\
		& \le \frac{C}{n} \to 0. \notag
	\end{align}		
	Using Theorem \ref{thm3.1} and the assumptions that $\|G_n - G\|_\square \to 0$ and $\lambda \mapsto \mathcal{L}(\xi^\lambda)$ is continuous with respect to $\W_{2}$, we have
	\begin{equation*}
	\E\left[ \int_0^1  \left(  \sup_{u \in [0,T]} |X^{\ld}_u-\wt{X}^{n,\ld}_u|^2 + \sup_{u \in [0,T]} |Y^{\ld}_u-\wt{Y}^{n,\ld}_u|^2+ \int_0^T |Z^{\ld}_s-\wt{Z}^{n,\ld}_s|^2 \, ds \right) \, d \ld \right] \to 0.
	\end{equation*}	
	Combining the last two displays gives \eqref{eq:POC-0}.
	
	Finally we will show \eqref{eq:POC-1} and \eqref{eq:POC-2} under the assumption that $G$ is continuous.
	Using Proposition \ref{prop-continuity}, we have the following convergence for the limiting system
	\begin{equation*}
		\E\left[ \int_0^1  \left(  \sup_{t \in [0,T]} |X_t^{\lceil n\lambda \rceil}-X_t^{\lambda}|^2 + \sup_{t \in [0,T]} |Y_t^{\lceil n\lambda \rceil}-Y_t^{\lambda}|^2 \right) \, d \ld \right]  \to 0.
	\end{equation*}		
	Combining this with \eqref{eq:POC-0} gives \eqref{eq:POC-1}.	
	By \eqref{eq:POC-1} we have
	\begin{equation*}
		\sup_{0 \le t \le T} \E\left[ \W_2^2(\nu_t^n,\bar\nu_t^n) \right] \to 0,
	\end{equation*}
	where $\bar\nu_t^n = \frac{1}{n} \sum_{i=1}^n \delta_{(X_t^{i/n},Y_t^{i/n})}$.
	Using the independence and moment bound on $(X_t^\lambda,Y_t^\lambda)$ (see e.g.\ \cite[Lemma A.1]{2020arXiv200810173B}), we have
	\begin{equation*}
		\sup_{0 \le t \le T} \E\left[ \W_2^2(\bar\nu_t^n,\E\bar\nu_t^n) \right] \to 0.
	\end{equation*}
	From Proposition \ref{prop-continuity} we have
	\begin{equation*}
		\sup_{0 \le t \le T} \E\left[ \W_2^2(\E\bar\nu_t^n,\nu_t) \right] \to 0.
	\end{equation*}
	Combining these three displays gives \eqref{eq:POC-2}.
	
\end{proof}

Under certain assumptions we can obtain the rate of convergence.

\begin{prop}\label{prop4.1}
	Suppose Assumption~\ref{assume:POC-1} holds.
	Then
	\begin{align*} 
		& \E\left[ \int_0^1  \left(  \sup_{t \in [0,T]} |X_t^{\lceil n\lambda \rceil,n}-X_t^{\lambda}|^2 + \sup_{t \in [0,T]} |Y_t^{\lceil n\lambda \rceil,n}-Y_t^{\lambda}|^2 \, ds + \int_0^T |Z^{\lceil n\lambda \rceil,\lceil n\lambda \rceil,n}_t -Z^{\lambda}_t|^2\,dt \right) \, d \ld \right] \\
		& \le \frac{C}{n} + C\|G_n - G\|_2^2 + C\int_0^1  \W_{2}^2\left(\cL(X^{\ld}_0), \cL(X^{\lceil n\lambda \rceil,n}_0)\right) d\ld.
	\end{align*}
\end{prop}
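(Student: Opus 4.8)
The plan is to run the proof of Theorem~\ref{thm4.1} again, but now tracking the explicit order of every error term instead of merely sending it to zero. Concretely, I would keep the intermediate system $(\wt X^n,\wt Y^n,\wt Z^n)$ --- the graphon limit \eqref{eq:sec4limit} driven by the step graphon $G_n$ with the piecewise constant initial data $(\xi^{\lceil n\ld\rceil})_{\ld}$ --- and, since $G_n$ and $\ld\mapsto\xi^{\lceil n\ld\rceil}$ are constant on each block $((i-1)/n,i/n]$, use uniqueness to observe that $\ld\mapsto(\wt X^{n,\ld},\wt Y^{n,\ld},\wt Z^{n,\ld})$ is blockwise constant, equal to $(\wt X^{i,n},\wt Y^{i,n},\wt Z^{i,n})$ on block $i$; hence $\int_0^1(\cdot)\,d\ld=\frac1n\sum_{i=1}^n(\cdot)$ for such quantities. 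Realizing the particle system \eqref{eq:sec4particle}, the intermediate system, and the limit \eqref{eq:sec4limit} on one stochastic basis (licensed by Lemma~\ref{lem:weakunique}), I would split pathwise $X^{\lceil n\ld\rceil,n}-X^\ld=(X^{\lceil n\ld\rceil,n}-\wt X^{n,\ld})+(\wt X^{n,\ld}-X^\ld)$, and likewise for $Y$ and for $Z$ (with the diagonal entry $Z^{\lceil n\ld\rceil,\lceil n\ld\rceil,n}-\wt Z^{n,\ld}$ in the last component).

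For the first difference I would simply invoke the estimate already produced inside the proof of Theorem~\ref{thm4.1}: Steps~1--3 there give, verbatim, $\E[\frac1n\sum_i(\sup_{t\le T}|X^{i,n}_t-\wt X^{i,n}_t|^2+\int_0^T|Y^{i,n}_t-\wt Y^{i,n}_t|^2dt+\sum_j\int_0^T|Z^{i,j,n}_t-\delta_{ij}\wt Z^{i,n}_t|^2dt)]\le C/n$, and Step~4 upgrades this to the supremum norm --- this is precisely \eqref{eq:POC-pf2}. Nothing needs to be redone: the $C/n$ comes from the blockwise law-of-large-numbers bound $\frac1{n^2}\sum_j G_n^2(\tfrac in,\tfrac jn)\E\bigl(\hat B(s,\wt x^{i,n}_s,\wt x^{j,n}_s,y^{i,n}_s)-\int\hat B(s,\wt x^{i,n}_s,x,y^{i,n}_s)\cL(\wt x^{j,n}_s)(dx)\bigr)^2\le C/n$ (and its $\hat F,\hat Q$ analogues), which is already quantitative, together with the contraction of $\wt\Gamma_{G_n}$. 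Using blockwise constancy this reads $\E[\int_0^1(\sup_t|X^{\lceil n\ld\rceil,n}_t-\wt X^{n,\ld}_t|^2+\cdots)\,d\ld]\le C/n$.

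For the second difference I would apply Proposition~\ref{prop3.1} --- whose hypotheses (Assumption~\ref{assume1} with $p\ge2$ and Assumption~\ref{assume3}) are all subsumed in Assumption~\ref{assume:POC-1} --- to the two copies of \eqref{eq:gGMFG} with graphons $G_n$ and $G$ and initial laws $\cL(\xi^{\lceil n\ld\rceil})$ and $\cL(\xi^\ld)$. Inequality \eqref{eq:stabilityconclusion-Lp} then yields $\E[\int_0^1(\sup_{t\le T}|\wt X^{n,\ld}_t-X^\ld_t|^2+\sup_{t\le T}|\wt Y^{n,\ld}_t-Y^\ld_t|^2+\int_0^T|\wt Z^{n,\ld}_t-Z^\ld_t|^2dt)\,d\ld]\le C\|G_n-G\|_2^2+C\int_0^1\W_2^2(\cL(\xi^{\lceil n\ld\rceil}),\cL(\xi^\ld))\,d\ld$, and since $\xi^{\lceil n\ld\rceil}=X^{\lceil n\ld\rceil,n}_0$ and $\xi^\ld=X^\ld_0$ this last integral equals $\int_0^1\W_2^2(\cL(X^\ld_0),\cL(X^{\lceil n\ld\rceil,n}_0))\,d\ld$. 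Combining the two differences through $|a-c|^2\le2|a-b|^2+2|b-c|^2$ (integrated over $\ld$, in expectation) produces the asserted bound.

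The only point that needs care is the same one glossed over in the proof of Theorem~\ref{thm4.1}, and I expect it to be the (minor) main obstacle: $X^{i,n}$ is driven by $W^{i/n}$ whereas $X^\ld$ is driven by $W^\ld$, so the three systems do not literally live on a common basis. I would handle this exactly as in Section~2 --- by Lemma~\ref{lem:weakunique} all three can be realized on one basis without altering any marginal law, and both estimates above are insensitive to the coupling used; equivalently, one may phrase the two brackets in terms of $\W_{2,T}^2$, which is dominated by the coupled expectations just bounded. No new analytic work is required beyond Proposition~\ref{prop3.1} and the $C/n$ bound already present in the proof of Theorem~\ref{thm4.1}; the proof is essentially bookkeeping of constants.
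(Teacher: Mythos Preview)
Your proposal is correct and follows essentially the same approach as the paper: introduce the intermediate system $(\wt X^n,\wt Y^n,\wt Z^n)$, bound the particle--intermediate difference by $C/n$ via \eqref{eq:POC-pf2} from the proof of Theorem~\ref{thm4.1}, bound the intermediate--limit difference via Proposition~\ref{prop3.1}, and combine by the triangle inequality together with the identification $\cL(\wt X^{n,\ld}_0)=\cL(X^{\lceil n\ld\rceil,n}_0)$. The paper's proof is terser and does not explicitly discuss the common stochastic basis issue you raise, but the argument is otherwise identical.
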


\begin{proof}
	From Proposition \ref{prop3.1} we have
	\begin{align*}
	& \E\left[ \int_0^1  \left(  \sup_{u \in [0,T]} |X^{\ld}_u-\wt{X}^{n,\ld}_u|^2 + \sup_{u \in [0,T]} |Y^{\ld}_u-\wt{Y}^{n,\ld}_u|^2+ \int_0^T |Z^{\ld}_s-\wt{Z}^{n,\ld}_s|^2 \, ds \right) \, d \ld \right] \\
	& \le C\|G_n - G\|_2^2 + C\int_0^1  \W_{2}^2\left(\cL(X^{\ld}_0), \cL(\tilde{X}^{n,\ld}_0)\right) d\ld.
	\end{align*}	
	The result then follows by combining this with \eqref{eq:POC-pf2} and the observation that $\cL(\wt{X}^{n,\ld}_0)=\cL(X^{\lceil n\lambda \rceil,n}_0)$.
\end{proof}

\begin{remark} \label{rmk-4.1}
	Proposition \ref{prop4.1} provides a rate of convergence in terms of the $L^2$ convergence $\|G_n - G\|_2 \to 0$. 
	Such a convergence holds, for example, if $\|G_n - G\|_\square \to 0$ and $G \in \{0,1\}$ (see e.g. \cite[Proposition 8.24]{lovasz2012large}).
	It also holds (by dominated convergence theorem) if $G$ is continuous and $G_n$ is sampled from $G$, namely $G_n(\frac{i}{n},\frac{j}{n}) := G(\frac{i}{n},\frac{j}{n})$.
\end{remark}

The following is a more precise rate of convergence under Lipschitz conditions.
 
\begin{corollary}\label{cor:4.1}
	Suppose Assumption~\ref{assume:POC-1} holds.
	Suppose $G$ is Lipschitz continuous, $\lambda \mapsto \mathcal{L}(\xi^\lambda)$ is Lipschitz continuous with respect to $\W_{2}$, and $G_n$ is sampled from $G$, namely $G_n(\frac{i}{n},\frac{j}{n}) = G(\frac{i}{n},\frac{j}{n})$.
	Then $\|G_n - G\|_2 \le \frac{C}{n}$ and hence
	\begin{align} 
		& \E\left[ \int_0^1  \left(  \sup_{t \in [0,T]} |X_t^{\lceil n\lambda \rceil,n}-X_t^{\lambda}|^2 + \sup_{t \in [0,T]} |Y_t^{\lceil n\lambda \rceil,n}-Y_t^{\lambda}|^2 \, ds \right. \right. \label{eq:rate-0} \\
		& \qquad \left. \left. + \int_0^T |Z^{\lceil n\lambda \rceil,\lceil n\lambda \rceil,n}_t -Z^{\lambda}_t|^2\,dt \right) \, d \ld \right] \le \frac{C}{n}, \notag \\
		& \frac{1}{n} \sum_{i=1}^n \E \sup_{0 \le t \le T} \left[ |X_t^{i,n}-X_t^{i/n}|^2 + |Y_t^{i,n}-Y_t^{i/n}|^2\right] \,dt \le \frac{C}{n}, \label{eq:rate-1} \\
		& \sup_{t \in [0,T]} \E \left[ \W_2^2(\nu^n_t,\nu_t) \right] \,dt \le C(n^{-1/2}+n^{-(p-2)/p}). \label{eq:rate-2} 
	\end{align} 
\end{corollary}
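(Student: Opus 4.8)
The plan is to reduce the statement to the already-established Propositions~\ref{prop4.1} and~\ref{prop-continuity}, by first producing the quantitative bound $\lVert G_n-G\rVert_2\le C/n$ together with the matching bound for the initial data, and then re-running the argument in the proof of Theorem~\ref{thm4.1} while keeping track of all the rates. For the graphon bound, since $G_n$ is sampled from $G$ we have $G_n(\ld,\k)=G\big(\tfrac{\lceil n\ld\rceil}{n},\tfrac{\lceil n\k\rceil}{n}\big)$ for $\ld,\k\in(0,1]$, so Lipschitz continuity of $G$ gives $|G_n(\ld,\k)-G(\ld,\k)|\le\tfrac{2\,\mathrm{Lip}(G)}{n}$ and hence $\lVert G_n-G\rVert_2\le\lVert G_n-G\rVert_\infty\le C/n$. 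Likewise, since $X_0^{\lceil n\ld\rceil,n}=\xi^{\lceil n\ld\rceil/n}$ and $X_0^\ld=\xi^\ld$, Lipschitz continuity of $\ld\mapsto\cL(\xi^\ld)$ in $\W_2$ yields $\W_2\big(\cL(X_0^\ld),\cL(X_0^{\lceil n\ld\rceil,n})\big)\le C/n$, so that $\int_0^1\W_2^2\big(\cL(X_0^\ld),\cL(X_0^{\lceil n\ld\rceil,n})\big)\,d\ld\le C/n^2$.

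Feeding these two bounds into Proposition~\ref{prop4.1} makes its right-hand side equal to $C/n+C/n^2+C/n^2\le C/n$, which is exactly \eqref{eq:rate-0} (the $Z$-term is already part of the estimate in Proposition~\ref{prop4.1}). For \eqref{eq:rate-1} I would follow the last part of the proof of Theorem~\ref{thm4.1}: Proposition~\ref{prop-continuity} in the Lipschitz case shows that $\ld\mapsto\cL(X^\ld,Y^\ld)$ is Lipschitz with respect to $\W_{2,T}$, so choosing an optimal coupling on each block $\big(\tfrac{i-1}{n},\tfrac{i}{n}\big]$ gives $\E\big[\int_0^1\big(\sup_{t\le T}|X_t^{\lceil n\ld\rceil}-X_t^\ld|^2+\sup_{t\le T}|Y_t^{\lceil n\ld\rceil}-Y_t^\ld|^2\big)\,d\ld\big]\le C/n^2$; combining this with \eqref{eq:rate-0} via the triangle inequality and the identity $\int_0^1(\cdot)\,d\ld=\tfrac1n\sum_{i=1}^n$ then yields \eqref{eq:rate-1}.

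For \eqref{eq:rate-2}, put $\bar\nu_t^n:=\tfrac1n\sum_{i=1}^n\delta_{(X_t^{i/n},Y_t^{i/n})}$ and split
\begin{align*}
\W_2^2(\nu_t^n,\nu_t)\le 3\,\W_2^2(\nu_t^n,\bar\nu_t^n)+3\,\W_2^2(\bar\nu_t^n,\E\bar\nu_t^n)+3\,\W_2^2(\E\bar\nu_t^n,\nu_t).
\end{align*}
The first term is $\le\tfrac{3}{n}\sum_{i=1}^n\big(|X_t^{i,n}-X_t^{i/n}|^2+|Y_t^{i,n}-Y_t^{i/n}|^2\big)$, so by \eqref{eq:rate-1} its expectation is $\le C/n$ uniformly in $t$; for the third term, $\E\bar\nu_t^n=\tfrac1n\sum_{i=1}^n\cL(X_t^{i/n},Y_t^{i/n})$ is a Riemann sum of $\nu_t=\int_0^1\cL(X_t^\ld,Y_t^\ld)\,d\ld$ and $\ld\mapsto\cL(X_t^\ld,Y_t^\ld)$ is $\W_2$-Lipschitz by Proposition~\ref{prop-continuity}, so convexity of $\W_2^2$ gives $\W_2^2(\E\bar\nu_t^n,\nu_t)\le C/n^2$ uniformly in $t$. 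The middle term is the dominant and most delicate one: the variables $(X_t^{i/n},Y_t^{i/n})_{i=1}^n$ are independent with a uniform-in-$i$ bound $\sup_\ld\E\big[\sup_{t\le T}\big(|X_t^\ld|^p+|Y_t^\ld|^p\big)\big]<\infty$ inherited from the limiting solution under Assumption~\ref{assume:POC-1}(vi), and a quantitative Fournier--Guillin-type estimate for empirical measures of independent $\R^2$-valued random variables --- the quantitative counterpart of the bound behind \cite[Lemma A.1]{2020arXiv200810173B} used in Theorem~\ref{thm4.1} --- gives $\sup_{t\le T}\E\big[\W_2^2(\bar\nu_t^n,\E\bar\nu_t^n)\big]\le C\big(n^{-1/2}+n^{-(p-2)/p}\big)$; summing the three contributions gives \eqref{eq:rate-2}. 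The hard part is precisely this last estimate: obtaining the sharp two-dimensional rate $n^{-1/2}+n^{-(p-2)/p}$ from only a $p$-th moment bound (with $p$ possibly barely above $2$) and for samples that are independent but not identically distributed requires the quantitative rather than the merely qualitative version of the argument used in Theorem~\ref{thm4.1}; everything else reduces to Propositions~\ref{prop4.1} and~\ref{prop-continuity} combined with elementary triangle-inequality and Riemann-sum estimates.
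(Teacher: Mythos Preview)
Your proposal is correct and follows essentially the same route as the paper's proof: derive $\lVert G_n-G\rVert_2\le C/n$ from Lipschitz continuity, feed it (together with the analogous initial-data bound) into Proposition~\ref{prop4.1} to get \eqref{eq:rate-0}, combine with Proposition~\ref{prop-continuity} for \eqref{eq:rate-1}, and then use the same three-term decomposition $\nu_t^n\to\bar\nu_t^n\to\E\bar\nu_t^n\to\nu_t$ with the quantitative empirical-measure bound from \cite[Lemma~A.1]{2020arXiv200810173B} for \eqref{eq:rate-2}. Your write-up in fact supplies more detail than the paper (the explicit $L^\infty$ graphon bound, the optimal-coupling/Riemann-sum argument, and the dimension-$2$ remark for the Fournier--Guillin rate), but the logical structure is identical.
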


\begin{proof}
The estimate $\|G_n - G\|_2 \le \frac{C}{n}$ follows from the Lipschitz continuity of $G$.
Applying this to Proposition \ref{prop4.1} gives \eqref{eq:rate-0}.
Combining \eqref{eq:rate-0} and Proposition \ref{prop-continuity}, we have \eqref{eq:rate-1}.
From \eqref{eq:rate-1} we have
\begin{equation*}
	\sup_{0 \le t \le T} \E\left[ \W_2^2(\nu_t^n,\bar\nu_t^n) \right] \le \frac{C}{n}.
\end{equation*}
Using the independence and moment bound on $(X_t^\lambda,Y_t^\lambda)$ (see e.g.\ \cite[Lemma A.1]{2020arXiv200810173B}), we have
\begin{equation*}
	\sup_{0 \le t \le T} \E\left[ \W_2^2(\bar\nu_t^n,\E\bar\nu_t^n) \right] \le C(n^{-1/2}+n^{-(p-2)/p}).
\end{equation*}
From Proposition \ref{prop-continuity} we have
\begin{equation*}
	\sup_{0 \le t \le T} \E\left[ \W_2^2(\E\bar\nu_t^n,\nu_t) \right] \le \frac{C}{n^2}.
\end{equation*}
Combining these three displays gives \eqref{eq:rate-2}.
\end{proof}

\begin{remark}\label{rmk-4.2}
	Although we work for $\R$-valued stochastic process $X^\lambda$, similar arguments can be used to show that Theorem \ref{thm4.1}, Proposition \ref{prop4.1} and Corollary \ref{cor:4.1} also hold for $\R^d$-valued setup.
	In that case, the rate in \eqref{eq:rate-2} will be slightly different (see e.g.\ \cite[Lemma A.1]{2020arXiv200810173B})
\end{remark}


%

\subsection{Method of continuation}

\begin{assume}\label{assume:POC-2}
	(i) $B_0,\hat{B},F_0,\hat{F},Q_0,\hat{Q}$ are $l$-Lipschitz. \\
	(ii) There exist a positive constant $k>3l$ such that 
	\begin{align*}
	-\Delta x \left(F_0(t,\theta)-F_0(t,\wt{\theta}) \right) +\Delta y\left( B_0(t,\theta)-B_0(t,\wt{\theta})  \right) 
	& \leq -k   (\Delta x)^2-k(\Delta y)^2 , \\
	\Delta x\left( Q_0(x)-Q_0(\wt{x}) \right) 
	& \geq k (\Delta x)^2,
	\end{align*}
	where $\Delta x:= x-\wt{x}$, $\Delta y:= y-\wt{y}$, $\theta=(x,y)$,$\wt{\theta}=(\wt{x},\wt{y})$. \\
	(iii) It holds that $\sup_{\ld \in [0,1]} \E[|\xi|^p] < +\infty$, and $(B(\cdot, 0), F(\cdot, 0), Q) \in \cM L^{p,2}_{\cF} \times \cM L^{p,2}_{\cF} \times \cM L^p_{\cF_T}$. \\
	(iv) $\lambda \mapsto \mathcal{L}(\xi^\lambda)$ is continuous with respect to $\W_{2}$.
\end{assume}

\begin{thm}\label{thm4.2}
	
	Suppose Assumption~\ref{assume:POC-2} holds and $\|G_n - G\|_\square \to 0$. 
	Then \eqref{eq:POC-0} holds.
	If in addition $G$ is continuous, then \eqref{eq:POC-1} and \eqref{eq:POC-2} hold.
\end{thm}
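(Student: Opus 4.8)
The plan is to mirror the proof of Theorem~\ref{thm4.1} step by step, replacing its contraction-mapping ingredient (which relied on Assumption~\ref{assume1}) by the method of continuation, exactly as Theorem~\ref{thm3.2} replaced Theorem~\ref{thm3.1}. As in Theorem~\ref{thm4.1}, I would let $(\wt{X}^n,\wt{Y}^n,\wt{Z}^n)$ denote the solution of the limiting system \eqref{eq:sec4limit} driven by the step graphon $G_n$ with piecewise constant initial states $(\xi^{\lceil n\ld \rceil})_{\ld\in[0,1]}$; under Assumption~\ref{assume:POC-2} it exists, is unique, is bounded in $\cM^p[0,T]$ uniformly in $n$ by Lemma~\ref{lem:initialize } and Proposition~\ref{prop:2.1}, and, being a McKean--Vlasov system, has components that are independent across the index $i$. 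Likewise, Assumption~\ref{assume:POC-2} makes the $n$-particle system \eqref{eq:sec4particle} a monotone FBSDE, so the continuation scheme of Lemma~\ref{lem:initialize } and Proposition~\ref{prop:2.1} --- applied with a finite index set and empirical interaction --- supplies its unique solution $(X^n,Y^n,Z^n)$ together with a uniform $\cM^p[0,T]$ bound.

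The core of the argument is the particle-versus-block comparison, which I would carry out at the level of the continuation maps rather than of $\Phi\circ\Psi$. Fix $\delta$ and $\zeta=1-\delta$ as in Proposition~\ref{prop:2.1}, and let $\widehat{\Pi}^{\zeta}_{G_n}$ and $\wt{\Pi}^{\zeta}_{G_n}$ be the analogues of the map $\Pi^{\zeta}_G$ of Proposition~\ref{prop:2.1} built from the continuation family \eqref{eq:conti} with, respectively, the empirical interaction $\frac{1}{n}\sum_j G_n(\frac{i}{n},\frac{j}{n})\h{B}(\cdot,x^{j,n},\cdot)$ and the law interaction $\frac{1}{n}\sum_j\int G_n(\frac{i}{n},\frac{j}{n})\h{B}(\cdot,x,\cdot)\cL(x^{j,n})(dx)$ (and similarly for $F$ and $Q$); by the computation in Step~1 of Proposition~\ref{prop:2.1} both are $\theta$-contractions under $\lVert\cdot\rVert^{I,2}$ with $\theta<1$ independent of $n$, and their fixed points are $(X^n,Y^n)$ and $(\wt{X}^n,\wt{Y}^n)$ respectively. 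Feeding the block solution $(\wt{X}^n,\wt{Y}^n)$ --- whose components are independent --- into both maps, applying It\^o's formula to $\Delta X^{i,n}_t\Delta Y^{i,n}_t$ and $|\Delta Y^{i,n}_t|^2$, using the joint monotonicity Assumption~\ref{assume:POC-2}(ii), the $l$-Lipschitz bounds and the $\cM^p$ moment bounds, and averaging over $i$, one obtains
\begin{align*}
\lVert\widehat{\Pi}^{\zeta}_{G_n}(\wt{X}^n,\wt{Y}^n)-\wt{\Pi}^{\zeta}_{G_n}(\wt{X}^n,\wt{Y}^n)\rVert^{I,2}\le\frac{C}{n},
\end{align*}
the $1/n$ coming from the fluctuation term $\frac{1}{n^2}\sum_j G_n^2(\frac{i}{n},\frac{j}{n})\,\E\bigl|\h{B}(s,\wt{X}^{i,n}_s,\wt{X}^{j,n}_s,\wt{Y}^{i,n}_s)-\int\h{B}(s,\wt{X}^{i,n}_s,x,\wt{Y}^{i,n}_s)\cL(\wt{X}^{j,n}_s)(dx)\bigr|^2$ and its $\h{F},\h{Q}$ analogues, which are $O(1/n)$ by the independence of the block components together with the Lipschitz property and moment bounds of $\h{B},\h{F},\h{Q}$, exactly as in Step~1 of Theorem~\ref{thm4.1}. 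Since $\wt{\Pi}^{\zeta}_{G_n}(\wt{X}^n,\wt{Y}^n)=(\wt{X}^n,\wt{Y}^n)$, $\widehat{\Pi}^{\zeta}_{G_n}(X^n,Y^n)=(X^n,Y^n)$ and $\widehat{\Pi}^{\zeta}_{G_n}$ is a $\theta$-contraction, the triangle inequality gives $\lVert(X^n,Y^n)-(\wt{X}^n,\wt{Y}^n)\rVert^{I,2}\le\frac{C}{(1-\theta)n}$. Arguing on the backward equation and using a BDG estimate as in the remaining steps of Theorem~\ref{thm4.1} (now with $\Delta X\Delta Y$-type estimates in place of the separate monotonicity), one upgrades this to
\begin{align*}
& \frac{1}{n}\sum_{i=1}^n\E\left[\sup_{t\in[0,T]}|X^{i,n}_t-\wt{X}^{i,n}_t|^2+\sup_{t\in[0,T]}|Y^{i,n}_t-\wt{Y}^{i,n}_t|^2+\sum_{j=1}^n\int_0^T|Z^{i,j,n}_t-\delta_{ij}\wt{Z}^{i,n}_t|^2\,dt\right] \\
& \qquad\le\frac{C}{n}.
\end{align*}

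To conclude, I would invoke the stability result Theorem~\ref{thm3.2} for the limiting system: since \eqref{eq:sec4limit} falls under Assumptions~\ref{assume2} and~\ref{assume3} (with constants read off from Assumption~\ref{assume:POC-2}), since $\|G_n-G\|_\square\to0$, and since $\ld\mapsto\cL(\xi^{\ld})$ is $\W_2$-continuous, it gives $\E\int_0^1\bigl(\sup_u|X^{\ld}_u-\wt{X}^{n,\ld}_u|^2+\sup_u|Y^{\ld}_u-\wt{Y}^{n,\ld}_u|^2+\int_0^T|Z^{\ld}_s-\wt{Z}^{n,\ld}_s|^2\,ds\bigr)\,d\ld\to0$; combining this with the previous display and the triangle inequality yields \eqref{eq:POC-0}. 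When $G$ is moreover continuous, Proposition~\ref{prop-continuity} gives the continuity of $\ld\mapsto\cL(X^{\ld},Y^{\ld})$, hence $\E\int_0^1(\sup_t|X^{\lceil n\ld\rceil}_t-X^{\ld}_t|^2+\sup_t|Y^{\lceil n\ld\rceil}_t-Y^{\ld}_t|^2)\,d\ld\to0$; together with \eqref{eq:POC-0} this gives \eqref{eq:POC-1}, and \eqref{eq:POC-2} then follows from the three-term decomposition of $\nu^n_t$ through $\bar{\nu}^n_t:=\frac{1}{n}\sum_i\delta_{(X^{i/n}_t,Y^{i/n}_t)}$, $\E\bar{\nu}^n_t$ and $\nu_t$, using \eqref{eq:POC-1}, the independence and moment bounds of \cite[Lemma~A.1]{2020arXiv200810173B}, and Proposition~\ref{prop-continuity}, exactly as at the end of Theorem~\ref{thm4.1}.

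The hard part will be the particle-versus-block estimate: because Assumption~\ref{assume:POC-2}(ii) only provides \emph{joint} monotonicity in $(x,y)$, the forward and backward comparisons cannot be decoupled as they were in Theorem~\ref{thm4.1}, so one must run the $\Delta X^{i,n}_t\Delta Y^{i,n}_t$ It\^o computation analogous to that in Proposition~\ref{prop:2.1} while simultaneously controlling the $O(1/n)$ empirical-versus-law fluctuation terms --- which is precisely why one must feed the block solution, with its independent components, into the maps --- the off-diagonal components $Z^{i,j,n}$ with $i\neq j$ coming from the $n$ driving Brownian motions of the particle system against the single one of the block system, and the continuation perturbation terms of size $\delta$; a secondary point is to check that \eqref{eq:sec4particle} is itself a monotone FBSDE under Assumption~\ref{assume:POC-2}, so that Proposition~\ref{prop:2.1} supplies its solution and the a priori $\cM^p$ ($p>2$) bounds needed throughout.
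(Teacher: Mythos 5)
Your proposal is correct and its analytical core --- It\^o's formula applied to $\Delta X^{i,n}_t\Delta Y^{i,n}_t$, the joint monotonicity of Assumption~\ref{assume:POC-2}(ii), the Lipschitz bounds, and the $O(1/n)$ empirical-versus-law fluctuation controlled by the independence of the block components --- is exactly the engine of the paper's proof. Where you differ is in the packaging of the particle-versus-block comparison: the paper does \emph{not} route through the continuation maps at all. It applies the monotonicity inequality directly to the difference of the two actual solutions $(X^n,Y^n,Z^n)$ and $(\wt{X}^n,\wt{Y}^n,\wt{Z}^n)$, i.e.\ it bounds $\E[\Delta X^{i,n}_T\Delta Y^{i,n}_T]$ from below via the terminal condition (\eqref{eq:POC-cutnorm2}) and from above via It\^o's formula (\eqref{eq:POC-cutnorm3}), combines the two, and averages over $i$ --- mirroring the \emph{uniqueness} argument after Proposition~\ref{prop:2.1} rather than the stability proof of Theorem~\ref{thm3.2}. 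Your version instead compares $\widehat{\Pi}^{\zeta}_{G_n}$ and $\wt{\Pi}^{\zeta}_{G_n}$ at $\zeta=1-\delta$ and bootstraps through the fixed-point perturbation inequality $\lVert(X^n,Y^n)-(\wt X^n,\wt Y^n)\rVert^{I,2}\le\frac{1}{1-\theta}\lVert\widehat{\Pi}^{\zeta}_{G_n}(\wt X^n,\wt Y^n)-(\wt X^n,\wt Y^n)\rVert^{I,2}$. This is valid --- and has the virtue of being structurally uniform with Theorem~\ref{thm3.2} --- but it buys nothing here and costs you the extra obligation of verifying that the empirical-interaction continuation map is a $\theta$-contraction with $\theta<1$ uniformly in $n$ (the paper only needs the continuation method for existence and a priori moment bounds of the particle system, not for the comparison itself). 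The remainder of your argument --- the $Z$ estimate via Step~4 of Theorem~\ref{thm3.2}, the appeal to Theorem~\ref{thm3.2} for $\wt{X}^n\to X$, Proposition~\ref{prop-continuity} for $X^{\lceil n\ld\rceil}\to X^{\ld}$, and the three-term decomposition of $\W_2(\nu^n_t,\nu_t)$ --- coincides with the paper's.
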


\begin{proof}	
	We will use the same notation as above Theorem \ref{thm4.1}.
	That is, let $(\wt{X}^n,\wt{Y}^n,\wt{Z}^n)$ be the unique solution of the limiting system with graphon $G_n$ and initial states $(\xi^{\lceil n\lambda \rceil})_{\lambda \in [0,1]}$.
	Abusing notations, we write $\wt{X}^n = (\wt{X}^{i,n})_{i=1}^n = (\wt{X}^{n,\lambda})_{\lambda \in [0,1]}$, $\wt{Y}^n = (\wt{Y}^{i,n})_{i=1}^n = (\wt{Y}^{n,\lambda})_{\lambda \in [0,1]}$ and $\wt{Z}^n = (\wt{Z}^{i,n})_{i=1}^n = (\wt{Z}^{n,\lambda})_{\lambda \in [0,1]}$.
	Let $\Delta X_t^{i,n} = X_t^{i,n} - \wt{X}_t^{i,n}$ and $\Delta Y_t^{i,n} = Y_t^{i,n} - \wt{Y}_t^{i,n}$.

	Let us compute 
	\begin{align}\label{eq:POC-cutnorm2}
	\E [\Delta X_T^{i,n} \Delta Y_T^{i,n}]  & \geq (k-\frac{3l}{2}-\epsilon) \E[ (\Delta X^{i,n}_T)^2]- \frac{l}{2} \frac{1}{n} \sum_{j=1}^n \E[ (\Delta X^{j,n}_T)^2]  \\
	& \quad - \frac{1}{4\epsilon} \E \left[ \frac{1}{n} \sum_{j=1}^n G_n(\frac{i}{n},\frac{j}{n}) \left( \hat{Q}(\wt{X}_T^{i,n},\wt{X}_T^{j,n}) - \int_{\R} \hat{Q}(\wt{X}_T^{i,n},x)\,\cL(\wt{X}_T^{j,n})(dx) \right) \right]^2. \notag
	\end{align}
	Using It\^{o}'s formula, we also obtain that 
	\begin{align}\label{eq:POC-cutnorm3}
	&\E \left[  \Delta X^{i,n}_T \Delta Y^{i,n}_T \right] \\
	& \leq  - ( k-\frac{5l}{2}-\e) \int_0^T \E[(\Delta X^{i,n}_t)^2+ (\Delta Y^{i,n}_t )^2  ] \, dt + l \frac{1}{n} \sum_{j=1}^n \int_0^T \E[ (\Delta X^{j,n}_t)^2] \, dt \notag \\
	& + \frac{1}{4\epsilon} \int_0^T \E \left[ \frac{1}{n} \sum_{j=1}^n G_n(\frac{i}{n},\frac{j}{n}) \left( \h{B}(t,\wt{X}_t^{i,n},\wt{X}_t^{j,n},\wt{Y}_t^{i,n}) - \int_{\R} \h{B}(t,\wt{X}_t^{i,n},x,\wt{Y}_t^{i,n}) \,\cL(\wt{X}_t^{j,n})(dx) \right) \right]^2 dt  \notag \\
	& + \frac{1}{4\epsilon} \int_0^T \E \left[ \frac{1}{n} \sum_{j=1}^n G_n(\frac{i}{n},\frac{j}{n}) \left( \h{F}(t,\wt{X}_t^{i,n},\wt{X}_t^{j,n},\wt{Y}_t^{i,n}) - \int_{\R} \h{F}(t,\wt{X}_t^{i,n},x,\wt{Y}_t^{i,n}) \,\cL(\wt{X}_t^{j,n})(dx) \right) \right]^2 dt. \notag
	\end{align} 
	Combining the above two inequalities \eqref{eq:POC-cutnorm2}, \eqref{eq:POC-cutnorm3} and averaging over $i \in \{1,\dotsc,n\}$, we get that 
	\begin{align*}
	& (k-\frac{7l}{2}-\e) \left( \frac{1}{n} \sum_{i=1}^n \E[ (\Delta X^{i,n}_T)^2] + \int_0^T \frac{1}{n} \sum_{i=1}^n \E[(\Delta X^{i,n}_t)^2+ (\Delta Y^{i,n}_t )^2  ] \, dt \right) \le \frac{C}{n\varepsilon} \to 0
	\end{align*}
	as $n \to \infty$.
	Combining this with \eqref{eq:POC-cutnorm2} and \eqref{eq:POC-cutnorm3} gives
	$$\E[ (\Delta X^{i,n}_T)^2] + \int_0^T \E[(\Delta X^{i,n}_t)^2+ (\Delta Y^{i,n}_t )^2  ] \, dt \le \frac{C}{n} \to 0$$
	as $n \to \infty$.

	Using the argument of \textit{Step 4} in Theorem \ref{thm3.2}, we have
	$$\frac{1}{n}\sum_{i=1}^n \sum_{j=1}^n \int_0^T \E |Z^{i,j,n}_t -\delta_{ij}\wt{Z}^{i,n}_t|^2\,dt \le \frac{C}{n}.$$ 
	By the argument of \textit{Step 4} in Theorem~\ref{thm4.1}, we have the estimate \eqref{eq:POC-pf2} and hence the desired results.
	
\end{proof}

Under certain assumptions we can obtain the rate of convergence.

\begin{prop}\label{prop4.2}
	Suppose Assumption~\ref{assume:POC-2} holds.
	Then
	\begin{align*} 
		& \E\left[ \int_0^1  \left(  \sup_{t \in [0,T]} |X_t^{\lceil n\lambda \rceil,n}-X_t^{\lambda}|^2 + \sup_{t \in [0,T]} |Y_t^{\lceil n\lambda \rceil,n}-Y_t^{\lambda}|^2 \, ds + \int_0^T |Z^{\lceil n\lambda \rceil,\lceil n\lambda \rceil,n}_t -Z^{\lambda}_t|^2\,dt \right) \, d \ld \right] \\
		& \le \frac{C}{n} + C\|G_n - G\|_2^2 + C\int_0^1  \W_{2}^2\left(\cL(X^{\ld}_0), \cL(X_0^{\lceil n\lambda \rceil,n})\right) d\ld.
	\end{align*}
\end{prop}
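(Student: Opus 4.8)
The plan is to follow the proof of Proposition~\ref{prop4.1} closely in structure, only replacing the contraction-based stability estimate (Proposition~\ref{prop3.1}) by its method-of-continuation counterpart (Proposition~\ref{prop3.2}) and extracting the quantitative particle-to-intermediate bound that is already present inside the proof of Theorem~\ref{thm4.2}. As there, the argument proceeds by inserting the intermediate system $(\wt X^n,\wt Y^n,\wt Z^n)$ --- the limiting system \eqref{eq:sec4limit} driven by the block graphon $G_n$ and the piecewise constant initial data $(\xi^{\lceil n\lambda\rceil})_{\lambda\in[0,1]}$ --- between the $n$-player system \eqref{eq:sec4particle} and the true limiting system \eqref{eq:sec4limit}, and controlling each of the two gaps separately.

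For the first gap I would reuse the estimates \eqref{eq:POC-cutnorm2}--\eqref{eq:POC-cutnorm3} from the proof of Theorem~\ref{thm4.2}; together with the $O(1/n)$ control of the mean-field fluctuation terms, which follows from the uniform-in-$n$ $L^2$-moment bounds on $\sup_t|\wt X^{i,n}_t|$, $\sup_t|\wt Y^{i,n}_t|$ and the $l$-Lipschitz property of $\hat B,\hat F,\hat Q$, this yields precisely \eqref{eq:POC-pf2}:
\begin{align*}
\E\left[\frac1n\sum_{i=1}^n\left(\sup_{t\in[0,T]}|X^{i,n}_t-\wt X^{i,n}_t|^2+\sup_{t\in[0,T]}|Y^{i,n}_t-\wt Y^{i,n}_t|^2+\sum_{j=1}^n\int_0^T|Z^{i,j,n}_t-\delta_{ij}\wt Z^{i,n}_t|^2\,dt\right)\right]\le\frac Cn.
\end{align*}
This is the sole origin of the $C/n$ term in the claimed bound. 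For the second gap I would apply Proposition~\ref{prop3.2} to the pair of graphons $(G,G_n)$: writing $\wt X^{n,\lambda}=\wt X^{\lceil n\lambda\rceil,n}$ for the solution of \eqref{eq:sec4limit} with graphon $G_n$ and $X^\lambda$ for the one with graphon $G$, Proposition~\ref{prop3.2} gives
\begin{align*}
\E\left[\int_0^1\left(\sup_{u\in[0,T]}|X^\ld_u-\wt X^{n,\ld}_u|^2+\sup_{u\in[0,T]}|Y^\ld_u-\wt Y^{n,\ld}_u|^2+\int_0^T|Z^\ld_s-\wt Z^{n,\ld}_s|^2\,ds\right)d\ld\right]\le C\|G_n-G\|_2^2+C\int_0^1\W_2^2\big(\cL(X^\ld_0),\cL(\wt X^{n,\ld}_0)\big)\,d\ld,
\end{align*}
and since $\cL(\wt X^{n,\ld}_0)=\cL(\xi^{\lceil n\lambda\rceil})=\cL(X^{\lceil n\lambda\rceil,n}_0)$ the last integral is exactly $\int_0^1\W_2^2(\cL(X^\ld_0),\cL(X_0^{\lceil n\lambda\rceil,n}))\,d\ld$.

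Finally I would combine the two displays by the triangle inequality in $L^2(\Omega\times[0,1])$, using $(a+b)^2\le2a^2+2b^2$ and the identification $X^{\lceil n\lambda\rceil,n}=X^{i,n}$, $Y^{\lceil n\lambda\rceil,n}=Y^{i,n}$, $Z^{\lceil n\lambda\rceil,\lceil n\lambda\rceil,n}=Z^{i,i,n}$ for $\lambda\in((i-1)/n,i/n]$; this produces the asserted estimate. The only point that is not entirely mechanical is checking that the constants in \eqref{eq:POC-pf2} are genuinely independent of $n$: this relies on the fact that the a priori $\cM^p$-bounds for the intermediate system furnished by Lemma~\ref{lem:momentbound} and Proposition~\ref{prop:2.1} are stated uniformly over $\lambda$ (hence over the block index $i$), together with the uniform boundedness of the block graphons $G_n$. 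Everything else is a direct transcription of the corresponding steps in Theorem~\ref{thm4.2} and Proposition~\ref{prop4.1}.
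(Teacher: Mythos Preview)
Your proposal is correct and follows exactly the same approach as the paper: invoke Proposition~\ref{prop3.2} to bound the gap between the intermediate system and the true limiting system, combine with the particle-to-intermediate estimate \eqref{eq:POC-pf2} already obtained inside Theorem~\ref{thm4.2}, and use $\cL(\wt X^{n,\ld}_0)=\cL(X^{\lceil n\lambda\rceil,n}_0)$. The paper's proof is just a two-line sketch of precisely these steps.
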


\begin{proof}
	From Proposition \ref{prop3.2} we have
	\begin{align*}
	& \E\left[ \int_0^1  \left(  \sup_{u \in [0,T]} |X^{\ld}_u-\wt{X}^{n,\ld}_u|^2 + \sup_{u \in [0,T]} |Y^{\ld}_u-\wt{Y}^{n,\ld}_u|^2+ \int_0^T |Z^{\ld}_s-\wt{Z}^{n,\ld}_s|^2 \, ds \right) \, d \ld \right] \\
	& \le C\|G_n - G\|_2^2 + C\int_0^1  \W_{2}^2\left(\cL(X^{\ld}_0), \cL(\tilde{X}^{n,\ld}_0)\right) d\ld.
	\end{align*}	
	The result then follows by combining this with \eqref{eq:POC-pf2} and the observation that $\cL(\wt{X}^{n,\ld}_0)=\cL(X^{\lceil n\lambda \rceil,n}_0)$.
\end{proof}

Proposition \ref{prop4.2} provides a rate of convergence in terms of the $L^2$ convergence $\|G_n - G\|_2 \to 0$. 
Such a convergence holds for examples mentioned in Remark \ref{rmk-4.1}. 
The following is a more precise rate of convergence under Lipschitz conditions.
As mentioned in Remark \ref{rmk-4.2}, the rate in \eqref{eq:rate-2} will be slightly different if the process $X^\lambda$ is $\R^d$-valued.
 
\begin{corollary}\label{cor:4.2}
	Suppose Assumption~\ref{assume:POC-2} holds.
	Suppose $G$ is Lipschitz continuous, $\lambda \mapsto \mathcal{L}(\xi^\lambda)$ is Lipschitz continuous with respect to $\W_{2}$, and $G_n$ is sampled from $G$, namely $G_n(\frac{i}{n},\frac{j}{n}) = G(\frac{i}{n},\frac{j}{n})$.
	Then $\|G_n - G\|_2 \le \frac{C}{n}$ and hence \eqref{eq:rate-0}--\eqref{eq:rate-2} hold.
\end{corollary}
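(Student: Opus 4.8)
The plan is to follow the strategy of the proof of Corollary~\ref{cor:4.1} essentially verbatim, replacing the contraction-based inputs by their method-of-continuation counterparts: Proposition~\ref{prop4.2} plays the role of Proposition~\ref{prop4.1}, and Assumption~\ref{assume:POC-2} that of Assumption~\ref{assume:POC-1}. The three ingredients are: (a) the sampling estimate $\lVert G_n-G\rVert_2\le C/n$; (b) Proposition~\ref{prop4.2}, which bounds the left-hand side of \eqref{eq:rate-0} by $C/n + C\lVert G_n-G\rVert_2^2 + C\int_0^1 \W_2^2\big(\mathcal{L}(X_0^\lambda),\mathcal{L}(X_0^{\lceil n\lambda\rceil,n})\big)\,d\lambda$; and (c) Proposition~\ref{prop-continuity}, which under the Lipschitz hypotheses yields Lipschitz continuity of $\lambda\mapsto\mathcal{L}(X^\lambda,Y^\lambda)$ with respect to $\W_{2,T}$.

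First I would prove $\lVert G_n-G\rVert_2\le C/n$. Writing $I_i:=((i-1)/n,i/n]$, by construction $G_n$ equals the constant $G(i/n,j/n)$ on $I_i\times I_j$, so for $(\lambda,\kappa)\in I_i\times I_j$ the $L$-Lipschitz continuity of $G$ gives $|G_n(\lambda,\kappa)-G(\lambda,\kappa)|\le L(|i/n-\lambda|+|j/n-\kappa|)\le 2L/n$. Integrating the square over $[0,1]^2$ yields $\lVert G_n-G\rVert_2^2\le 4L^2/n^2$, i.e.\ $\lVert G_n-G\rVert_2\le C/n$.

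Next I would feed this into Proposition~\ref{prop4.2}. The initial-condition term is handled by the Lipschitz continuity of $\lambda\mapsto\mathcal{L}(\xi^\lambda)$: since $\mathcal{L}(X_0^{\lceil n\lambda\rceil,n})=\mathcal{L}(\xi^{\lceil n\lambda\rceil/n})$ and $|\lambda-\lceil n\lambda\rceil/n|\le 1/n$, we get $\W_2^2\big(\mathcal{L}(X_0^\lambda),\mathcal{L}(X_0^{\lceil n\lambda\rceil,n})\big)\le C/n^2$, so the integral term is $\le C/n^2$. Combining with $\lVert G_n-G\rVert_2^2\le C/n^2$ and the leading $C/n$ term, Proposition~\ref{prop4.2} gives \eqref{eq:rate-0}. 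For \eqref{eq:rate-1}, Proposition~\ref{prop-continuity} implies $\W_{2,T}\big(\mathcal{L}(X^{\lceil n\lambda\rceil},Y^{\lceil n\lambda\rceil}),\mathcal{L}(X^\lambda,Y^\lambda)\big)\le C/n$, hence, via the coupling realizing this distance exactly as in the proof of Theorem~\ref{thm4.1}, $\E\int_0^1\big[\sup_t|X_t^{\lceil n\lambda\rceil}-X_t^\lambda|^2+\sup_t|Y_t^{\lceil n\lambda\rceil}-Y_t^\lambda|^2\big]\,d\lambda\le C/n^2$; combining this with \eqref{eq:rate-0} through the triangle inequality and rewriting the $\lambda$-integral as the average over $i=1,\dots,n$ gives \eqref{eq:rate-1}.

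Finally, \eqref{eq:rate-2} follows from the same three-display decomposition used in Corollary~\ref{cor:4.1}: with $\bar\nu_t^n:=\frac1n\sum_{i=1}^n\delta_{(X_t^{i/n},Y_t^{i/n})}$, write $\W_2(\nu_t^n,\nu_t)\le\W_2(\nu_t^n,\bar\nu_t^n)+\W_2(\bar\nu_t^n,\E\bar\nu_t^n)+\W_2(\E\bar\nu_t^n,\nu_t)$; the square of the first term is $\le C/n$ by \eqref{eq:rate-1}, the middle term is $\le C(n^{-1/2}+n^{-(p-2)/p})$ by the empirical-measure concentration estimate for the independent family $(X_t^\lambda,Y_t^\lambda)_{\lambda\in[0,1]}$ together with its moment bounds (\cite[Lemma~A.1]{2020arXiv200810173B}), and the last term is $\le C/n^2$ by Proposition~\ref{prop-continuity}; combining the three estimates yields \eqref{eq:rate-2}. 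I do not expect a genuine obstacle: every step is a direct combination of already established results, and the only point requiring a little care is passing from the $\W_{2,T}$-Lipschitz bound of Proposition~\ref{prop-continuity} to the pathwise $L^2$ estimate needed in \eqref{eq:rate-1}, which is done exactly as in the proof of Theorem~\ref{thm4.1}.
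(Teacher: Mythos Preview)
Your proposal is correct and follows exactly the approach of the paper, which simply states that the proof is identical to that of Corollary~\ref{cor:4.1} with Proposition~\ref{prop4.1} replaced by Proposition~\ref{prop4.2}. You have in fact spelled out more detail than the paper itself provides.
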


\begin{proof}
	The proof is similar to that of Corollary \ref{cor:4.1}, except that the use of Proposition \ref{prop4.1} is replaced by Proposition \ref{prop4.2}, and hence omitted.
\end{proof}

\section{Graphon mean field game and convergence of $n$-player game}
Let $G: [0,1] \times [0,1] \to \R_+$ be a bounded graphon, and without loss of generality assume that $|G(\ld,\k)| \leq 1, \, \forall (\ld,\k) \in [0,1]^2$. Each $\lambda \in [0,1]$ represents a type of population, which consists of continuum many players. Let $\bs{\eta}^{\lambda} \in \cP_2(\R)$ denote the distribution of population of type $\lambda$, and $\bs{\eta}$ denote the collection $\{\bs{\eta}^{\lambda}: \, \lambda \in [0,1] \}$, i.e., $\bs{\eta} \in \cM([0,1]; \cP_2(\R))$. Let $A \subset \R^n$ be a  convex control space. Take functions 
\begin{align*}
b_1, f_1, f_2&: [0,T] \times\R \to \R,\\
b_2,b_3&: [0,T] \to \R, \\
q_1, q_2 &: \R \to \R. 
\end{align*}
For any $(\lambda,x,\bs{\eta},a) \in [0,1] \times \R  \times \cM([0,1]; \cP_2(\R) )\times A$, we define 
\begin{align*}
b^{\ld}_G(t,x,\bs{\eta}, a)&:= \int_{[0,1]} G(\ld,\kappa) \, d \kappa \int_{\R} b_1(t,z) \, \bs{\eta}^{\kappa}(dz) + b_2(t)x+ b_3(t)a, \\
f^{\ld}_G(t,x,\bs{\eta},a)&:= f_1(t, x)+ \int_{[0,1]} G(\ld, \ka) \, d \kappa \int_{\R } f_2(t, z) \, \bs{\eta}^{\kappa}(dz) +\frac{1}{2} a^2,\\
q^{\ld}_G(x, \bs{\eta})&:= q_1(x)+ \int_{[0,1]} G(\ld,\ka) \, d \kappa \int_{\R} q_2(z)\,  \bs{\eta}^{\ka}(dz) .
\end{align*}

Let $W^{\lambda}$ be a family of independent standard Brownian motion, and $\sigma>0$ be a constant volatility. Denote by $\bs{\mu}_G^{\lambda}(t)$ the distribution of players of type $\lambda$ at time $t$, and $\bs{\mu}_G(t): = \{ \bs{\mu}^{\lambda}_G(t): \, \lambda \in [0,1] \}$, $\bs{\mu}_G:=\{\bs{\mu}_G(t): \, t \in [0,T] \}$.   Choosing $\a^{\lambda}(t) \in A$, a representative player of type $\lambda$ controls the dynamic 
\begin{align*}
\begin{cases}
dX^{\lambda}_t = {b}_G^{\lambda}\left(t,X^{\lambda}_t,\bs{\mu}_G(t), \a^{\lambda}(t)\right)\, dt + \sigma \, d W^{\lambda}_t, \\
X^{\lambda}_0 =\xi^{\lambda},
\end{cases}
\end{align*}
where $\xi^{\lambda}$ is a square integrable random variable. The cost for the representative player $\lambda$ is given by
\begin{align}\label{eq:cost} 
J(\a^{\lambda}, \bs{\mu}_G) =\E \left[\int_0^T {f}_G^{\lambda}\left(t,X^{\lambda}_t,  \bs{\mu}_G(t),\a^{\lambda}(t)\right) \, dt + {q}_G^{\lambda}\left(X^{\lambda}_T, \bs{\mu}_G(T)\right)\right],
\end{align}
and each representative player chooses control $\a^{\ld}(t)$ to minimize $J(\a^{\ld}, \bs{\mu}_G)$. 

For each $\lambda \in [0,1]$, define the Hamiltonian 
\begin{align*}
H_G^{\lambda}\left(t,x,\bs{\mu}_G(t),y,a\right):= {b}_G^{\lambda} \left(t,x, \bs{\mu}_G(t),a\right) \cdot y + {f}_G^{\lambda}\left(t,x,a,\bs{\mu}_G(t)\right),
\end{align*}
and the minimizer 
\begin{align*}
\hat{\a}^{\lambda}_G(t,x, \bs{\mu}_G(t), y):= \lambdargmin_{a \in A} H_G^{\lambda}\left(t,x,\bs{\mu}_G(t),y,a\right)=-b_3(t)y . 
\end{align*}
 Given $\{\bs{\mu}_G(t): \, 0 \leq t \leq T\}$, by Pontryagin's maximum principle, we obtain a family of BSDE 
\begin{align}\label{eq:BSDE}
\begin{cases}
dY^{\lambda}_t = -\pa_x H_G^{\lambda}\left(t,X^{\lambda}_t, \bs{\mu}_G(t), Y^{\ld}_t,-b_3(t)Y^{\ld}_t  \right) dt +Z^{\lambda}_t \, d W^{\lambda}_t, \\
Y^{\lambda}_T =\pa_x{q}_G^{\lambda}\left(X^{\lambda}_T, \bs{\mu}_G(T)\right).
\end{cases} 
\end{align}

Since the law of $X^{\lambda}_t$ should coincide with $\bs{\mu}^{\lambda}_G(t)$, after simplification we obtain the FBSDE of the graphon field game 
\begin{align}\label{eq:FBSDE}
\begin{cases}
dX^{\lambda}_t = \left( b_2(t)X^{\lambda}_t  -|b_3(t)|^2 Y^{\lambda}_t + \int_0^1 \int_\R G(\lambda,\kappa) \, \E[ b_1(t,X^{\kappa}_t )] \,d\kappa  \right)  dt + \sigma \, d W^{\lambda}_t, \\
dY^{\lambda}_t =- \left(b_2(t) Y^{\ld}_t+\pa_x f_1(t,X^{\ld}_t) \right) \,dt  +Z^{\lambda}_t \, d W^{\lambda}_t, \\
 X^{\lambda}_0 =\xi^{\lambda}, \\
 Y^{\lambda}_T = \pa_x q_1 (X_T^{\lambda}), \quad \forall \ld \in [0,1].
\end{cases}
\end{align}

In order for coefficients of \eqref{eq:FBSDE} to satisfy Assumption~\ref{assume1}, \ref{assume3}, \ref{assume:POC-1} or \ref{assume2}, \ref{assume3}, \ref{assume:POC-2}, we propose the following conditions.

\begin{assume}\label{assume4}
(i)  $b_1$ grows at most linearly $x$. $b_2(t), b_3(t)$ are uniformly bounded. $f_1, f_2,q_1$ are differentiable, and of at most quadratic growth in $x$. \\
(ii) $f_1,f_2, q_1 $ are convex in $x$. \\
(iii)  $b_1,\pa_x f_1, \pa_x f_2, \pa_x q_1 $ are $L$-Lipschitz in $x$, and $\max_{ t \in [0,T]} |b_3(t)| \leq L$ for some $L>1$. \\
(iv)  It holds that
\begin{align}\label{eq:coeffinq1}
\max_{t \in [0,T]} b_2(t) < -100 L^4.
\end{align}
 (v) We have $\sup_{\ld \in [0,1]} \E[|\xi^{\ld}|^p] < +\infty$ and $\ld \to \cL(\xi^{\ld})$ is continuous with respect to $\W_2$.
\end{assume}

Due to the explicit structure of \eqref{eq:FBSDE}, we can easily check that its coefficients satisfy Assumptions~\ref{assume1}, \ref{assume3} and \ref{assume:POC-1}. Therefore we obtain the following result. 
\begin{corollary}
Under Assumption~\ref{assume4}, there exists a unique solution to \eqref{eq:FBSDE}, the solution is stable in the sense of Theorem~\ref{thm3.1}, Proposition~\ref{prop3.1}, and the propagation of chaos results hold as in Theorem~\ref{thm4.1}, Proposition~\ref{prop4.1}, Corollary~\ref{cor:4.1}. 
\end{corollary}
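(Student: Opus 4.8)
The plan is to show that the coefficients of \eqref{eq:FBSDE} are covered by Assumptions~\ref{assume1}, \ref{assume3} and \ref{assume:POC-1}, and then to read off the conclusion from Theorem~\ref{thm3.1}, Proposition~\ref{prop3.1}, Theorem~\ref{thm4.1}, Proposition~\ref{prop4.1} and Corollary~\ref{cor:4.1}. Matching \eqref{eq:FBSDE} against the abstract systems \eqref{eq:gGMFG} and \eqref{eq:sec4limit} gives the identifications $B_0(t,x,y)=b_2(t)x-|b_3(t)|^2y$, $\hat B(t,x,x',y)=b_1(t,x')$, $F_0(t,x,y)=b_2(t)y+\pa_xf_1(t,x)$, $\hat F\equiv 0$, $Q_0(x)=\pa_xq_1(x)$, $\hat Q\equiv 0$ (the mean-field contributions of $f_2,q_2$ dropping out after differentiating in $x$), and $B^{\ld}_G,F^{\ld}_G,Q^{\ld}_G$ are already written in the affine-in-$G$ form required by Assumption~\ref{assume3}, with $|G|\le 1$. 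So what remains is to check the quantitative hypotheses.

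First I would record the structural constants. Since $b_1,\pa_xf_1,\pa_xq_1$ are $L$-Lipschitz in $x$, $|b_3|\le L$, and $b_2$ is uniformly bounded: $B^{\ld}_G$ is Lipschitz in $x$ and $L^2$-Lipschitz in $y$; its dependence on the measure argument is bounded by $L\int_{[0,1]}\W_p(\bs\eta^\k,\wt{\bs\eta}^\k)\,d\k$ because $|G|\le 1$; and $F^{\ld}_G$ (resp.\ $Q^{\ld}_G$) is $L$-Lipschitz in $x$, Lipschitz in $y$, and independent of the measure argument. Hence Assumption~\ref{assume1}(iii) holds with $L_1=\max(L^2,2L)$ and $L_2=L_3=L$. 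For the monotonicity, $B^{\ld}_G(t,x,\bs\eta,y)-B^{\ld}_G(t,x',\bs\eta,y)=b_2(t)(x-x')$ and $F^{\ld}_G(t,x,\bs\eta,y)-F^{\ld}_G(t,x,\bs\eta,y')=b_2(t)(y-y')$, so Assumption~\ref{assume1}(i),(ii) hold with $K_1=K_2=-\max_{t\in[0,T]}b_2(t)>100L^4$ by \eqref{eq:coeffinq1}. The convexity of $f_1,f_2,q_1$ (Assumption~\ref{assume4}(ii)) makes the Hamiltonian convex in $(x,a)$ and thus legitimizes the passage from the game to \eqref{eq:FBSDE}; the linear growth of $b_1$ together with the growth/Lipschitz hypotheses on $f_1,q_1$ and Assumption~\ref{assume4}(v) supply the remaining integrability, measurability and $\W_2$-continuity requirements in Assumption~\ref{assume1}(v) and in Assumption~\ref{assume:POC-1}(i)--(iv),(vi),(vii) (the linear — rather than bounded — dependence of the forward drift on $y$ being absorbed by the strong monotonicity exactly as in the proof of Lemma~\ref{lem:picard}).

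The substantive step is the algebraic condition Assumption~\ref{assume1}(iv) (equivalently Assumption~\ref{assume:POC-1}(v)). The choice $k=0$ lies in $((2p-2)L_2-pK_2,\,pK_1-(2p-1)L_1)$ since $K_1=K_2\gg L^2\ge L_1$. With $K_1=K_2>100L^4$, $L_1\le 2L^2$, $L_2=L_3=L$, the left side of \eqref{eq:contraction} is $pK_2-(2p-2)L\ge 100pL^4-2pL$, while the right side is $2^pL_1L^p+\big(2^{p-1}L_1^2L^p+2L_1L\big)/\big(pK_1-(2p-1)L_1\big)$, which at $p=2$ a short computation bounds by roughly $5L^4$ when $L\ge 1$ and which is therefore strictly dominated; since every quantity is continuous in $p$, this strict inequality and the first inequality in Assumption~\ref{assume1}(iv) persist for some $p>2$. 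This careful tracking of constants — engineered precisely by the threshold $-100L^4$ — is the only real obstacle; everything else is bookkeeping. Having verified Assumptions~\ref{assume1}, \ref{assume3} and \ref{assume:POC-1} with this $p>2$, existence and uniqueness for \eqref{eq:FBSDE} follow from Theorem~\ref{thm:contraction} via Lemma~\ref{lem:weakunique}, the stability statements from Theorem~\ref{thm3.1} and Proposition~\ref{prop3.1}, and the propagation-of-chaos statements (with the convergence rate under the sampled-graphon hypothesis) from Theorem~\ref{thm4.1}, Proposition~\ref{prop4.1} and Corollary~\ref{cor:4.1}.
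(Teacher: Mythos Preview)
Your proposal is correct and follows exactly the paper's approach: the paper's own ``proof'' is the single sentence preceding the corollary, asserting that the coefficients of \eqref{eq:FBSDE} satisfy Assumptions~\ref{assume1}, \ref{assume3} and \ref{assume:POC-1}, and you have carried out this verification in detail (identifying $B_0,\hat B,F_0,\hat F,Q_0,\hat Q$, extracting $K_1=K_2=-\max_t b_2(t)$, $L_1\le 2L^2$, $L_2=L_3=L$, and checking \eqref{eq:contraction} at $k=0$, $p=2$ with a continuity argument in $p$). Your remark that the forward drift is linear rather than bounded in $y$---so Assumption~\ref{assume:POC-1}(iv) is not literally satisfied---and that this is absorbed by the strong monotonicity $K_1>100L^4$ is a point the paper simply glosses over.
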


\begin{assume}\label{assume5}
(i)  $b_1$ grows at most linearly $x$. $b_2(t), b_3(t)$ are uniformly bounded. $f_1, f_2, q_1$ are differentiable, and of at most quadratic growth in $x$. \\
(ii)  There exist positive $\iota$ such that 
\begin{align*}
q_1(x')-q_1(x)-(x'-x)\pa_x q_1(x) & \geq \iota (x'-x)^2 ,
\end{align*}
and $f_1, f_2$ are convex in $x$. \\
(iii)  $b_1$ and $ \pa_x f_1$ are $L$-Lipschitz in $x$ for some $L \geq 1$.\\
(iv)  It holds that 
\begin{align}\label{eq:coeffieq2}
 \min\left\{\inf_{t \in [0,T]} |b_3(t)|^2, \iota \right\} \geq 100 L^2.
\end{align}
 (v) We have $\sup_{\ld \in [0,1]} \E[|\xi^{\ld}|^p] < +\infty$ and $\ld \to \cL(\xi^{\ld})$ is continuous with respect to $\W_2$.
\end{assume}

\begin{corollary}
Under Assumption~\ref{assume5}, there exists a unique solution to \eqref{eq:FBSDE}, the solution is stable in the sense of Theorem~\ref{thm3.2}, Proposition~\ref{prop3.2}, and the propagation of chaos results holds as in Theorem~\ref{thm4.2}, Proposition~\ref{prop4.2}, Corollary~\ref{cor:4.2}. 
\end{corollary}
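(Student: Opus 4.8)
The plan is to recognize the game FBSDE~\eqref{eq:FBSDE} as a special case of the abstract graphon FBSDE of Sections~2--4, and to check that, under Assumption~\ref{assume5}, its coefficients satisfy Assumptions~\ref{assume2}, \ref{assume3} and~\ref{assume:POC-2}; the well-posedness result of Section~2 under Assumption~\ref{assume2}, together with Theorem~\ref{thm3.2}, Proposition~\ref{prop3.2}, Theorem~\ref{thm4.2}, Proposition~\ref{prop4.2} and Corollary~\ref{cor:4.2}, then deliver all the asserted conclusions at once. Reading the coefficients off~\eqref{eq:FBSDE}, in the notation of Section~4 one takes
\[ B_0(t,x,y)=b_2(t)x-|b_3(t)|^2 y,\qquad \h B(t,x,x',y)=b_1(t,x'),\qquad F_0(t,x,y)=b_2(t)y+\pa_x f_1(t,x), \]
\[ \h F\equiv 0,\qquad Q_0(x)=\pa_x q_1(x),\qquad \h Q\equiv 0, \]
so that the backward generator and the terminal data carry no mean-field interaction, while the forward drift interacts only through $b_1$. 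In particular Assumption~\ref{assume3} holds trivially (with $\bs{\eta}$-independent parts $F_0$, $Q_0$), and the Section~4 system~\eqref{eq:sec4particle}--\eqref{eq:sec4limit} is exactly the one these coefficients generate.

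Next I would dispatch the routine hypotheses. The uniform boundedness of $b_2,b_3$ and the $L$-Lipschitz continuity of $b_1$, $\pa_x f_1$, $\pa_x q_1$ from Assumption~\ref{assume5}(i),(iii) give that $B_0,\h B,F_0,Q_0$ are Lipschitz in their state variables (with $\h F,\h Q$ trivially so), which is Assumption~\ref{assume2}(i) and Assumption~\ref{assume:POC-2}(i); the at-most-quadratic growth of $f_1,q_1$ (so that $\pa_x f_1,\pa_x q_1$ grow at most linearly) together with $\sup_{\ld\in[0,1]}\E[|\xi^{\ld}|^p]<\infty$ yields $(B(\cdot,0),F(\cdot,0),Q)\in\cM L^{p,2}_{\cF}\times\cM L^{p,2}_{\cF}\times\cM L^{p}_{\cF_T}$, and Assumption~\ref{assume5}(v) supplies the $\W_2$-continuity of $\ld\mapsto\cL(\xi^{\ld})$; this covers Assumption~\ref{assume2}(iii) and Assumption~\ref{assume:POC-2}(iii),(iv).

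The heart of the argument is the monotonicity condition Assumption~\ref{assume2}(ii) (equivalently Assumption~\ref{assume:POC-2}(ii)). For fixed $\bs{\eta}$, writing $\Delta x=x-\wt x$, $\Delta y=y-\wt y$, the $b_2(t)\Delta x\Delta y$ cross terms cancel and one computes
\[ -\Delta x\bigl(F_0(t,\theta)-F_0(t,\wt\theta)\bigr)+\Delta y\bigl(B_0(t,\theta)-B_0(t,\wt\theta)\bigr) = -\Delta x\bigl(\pa_x f_1(t,x)-\pa_x f_1(t,\wt x)\bigr)-|b_3(t)|^2(\Delta y)^2 \le -\inf_{t\in[0,T]}|b_3(t)|^2(\Delta y)^2, \]
using convexity of $f_1$; while symmetrizing the inequality in Assumption~\ref{assume5}(ii) gives $\Delta x\bigl(\pa_x q_1(x)-\pa_x q_1(\wt x)\bigr)\ge 2\iota(\Delta x)^2$. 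Thus coercivity in $y$ is produced by the $-|b_3(t)|^2y$ term in the forward drift and coercivity in $x$ by the strong convexity of the terminal cost $q_1$, and condition~\eqref{eq:coeffieq2}, $\min\{\inf_t|b_3(t)|^2,\iota\}\ge 100L^2$, is exactly what is needed to choose the constant $k$ in Assumption~\ref{assume2}(ii) strictly larger than $3l$ (with $l$ the common Lipschitz constant, controlled by $L$ and $\sup_t|b_3(t)|^2$), so that the method of continuation of Section~2 (Proposition~\ref{prop:2.1}) applies. I expect the only delicate point to be this monotonicity bookkeeping: since $f_1$ is merely convex, the pair $(B_0,F_0)$ contributes no strictly negative $(\Delta x)^2$ term, so the $x$-coercivity that drives the continuation has to be read off entirely from the terminal condition, and one must track how $\inf_t|b_3(t)|^2$, $\iota$ and the Lipschitz constants propagate along the homotopy $\zeta\mapsto\zeta+\delta$; the factor $100$ in~\eqref{eq:coeffieq2} is precisely the slack reserved for this. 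Everything else --- existence, uniqueness, $L^p$-stability in the cut norm and in the $L^2$ distance, and the propagation-of-chaos estimates with their convergence rates --- is then inherited verbatim from Theorems~\ref{thm3.2}, \ref{thm4.2}, Propositions~\ref{prop3.2}, \ref{prop4.2} and Corollary~\ref{cor:4.2}.
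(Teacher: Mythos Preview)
Your overall plan --- read off the coefficients of~\eqref{eq:FBSDE} in the form of Section~4, verify Assumptions~\ref{assume2}, \ref{assume3}, \ref{assume:POC-2}, and then invoke the corresponding theorems --- is precisely what the paper intends; the corollary is stated without proof, in parallel with the preceding one under Assumption~\ref{assume4}.

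There is, however, a genuine gap in your verification of the monotonicity condition. As you correctly compute, with $\bs\eta$ fixed,
\[
-\Delta x\bigl(F_0(t,\theta)-F_0(t,\wt\theta)\bigr)+\Delta y\bigl(B_0(t,\theta)-B_0(t,\wt\theta)\bigr)
= -\Delta x\bigl(\pa_x f_1(t,x)-\pa_x f_1(t,\wt x)\bigr)-|b_3(t)|^{2}(\Delta y)^{2},
\]
and Assumption~\ref{assume5}(ii) gives only convexity of $f_1$, so the first term is merely $\le 0$. The drift bound is therefore $\le -\inf_t|b_3(t)|^{2}(\Delta y)^{2}$ with \emph{no} strictly negative $(\Delta x)^{2}$ contribution, whereas Assumption~\ref{assume2}(ii) and Assumption~\ref{assume:POC-2}(ii) both require the drift inequality $\le -k(\Delta x)^{2}-k(\Delta y)^{2}$. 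You flag this (``the only delicate point''), but the remedy you suggest --- reading the $x$-coercivity off the terminal and ``tracking the constants along the homotopy'' --- does not let you invoke Theorems~\ref{thm3.2}, \ref{thm4.2} and their companions as black boxes, since those are proved under Assumption~\ref{assume2}/\ref{assume:POC-2} verbatim. To close the gap you must either assume strong convexity of $f_1$ (so the drift becomes coercive in $x$ as well), or actually rerun the continuation and stability estimates of Proposition~\ref{prop:2.1} and Theorem~\ref{thm3.2} under the asymmetric Peng--Wu-type monotonicity (drift coercive only in $y$, terminal coercive in $x$); this is plausibly why both $\inf_t|b_3(t)|^{2}$ and $\iota$ appear in~\eqref{eq:coeffieq2}, but it is additional work, not a citation. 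A smaller point in the same spirit: Assumption~\ref{assume5}(iii) does not list $\pa_x q_1$ among the $L$-Lipschitz functions, so your appeal to it for the Lipschitz continuity of $Q_0$ is also not supported by the hypotheses as written.
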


Now let us we turn to the convergence of finite player game. Fix $n \in \N$ and $G_n$. For any $(i, \ul{x}, a^i) \in \{1, \dotso, n\} \times \R^n \times A^n$, we define 
\begin{align*}
b^{i,n}(t,\ul{x}, a^i)&:= \frac{1}{n}\sum_{j=1}^n G_n(i,j) b_1(t,x^j) + b_2(t)x^i+ b_3(t)a^i, \\
f^{i,n}(t,\ul{x},a^i)&:= f_1(t, x^i)+ \frac{1}{n}\sum_{j=1}^n G_n(i,j) f_2(t, x^j)+\frac{1}{2}(a^i)^2, \\
q^{i,n}(\ul{x})&:= q_1(x^i)+ \frac{1}{n} \sum_{j=1}^n G_n(i,j) q_2(x^j).
\end{align*}
Let us compute the FBSDE system of this $n$-player game. Each player has the Hamiltonian 
\begin{align*}
H^{i,n}(t,\ul{x}, \ul{y}^i, \ul{a})=b^n(t,\ul{x}, \ul{a}) \cdot \ul{y}^i + f^{i,n} (t,\ul{x}, \ul{a}^i),
\end{align*}
where $\ul{x}, \ul{y}^i \in \R^n$, $\ul{a} \in A^n$. By our construction, it is clear that for any $t \geq 0$, $ \ul{x} \in \R^n$ and $\ul{y} \in \R^{n \times n}$, functions $\h{\a}^{i,n}(t, \ul{x},\ul{y}^{i}) :=-b_3(t) y^{i,i}$ satisfy that 
\begin{align*}
H^{i,n}(t,\ul{x},\ul{y}^i, \h{\a}^n(t, \ul{x},\ul{y})) \leq H^{i,n} (t, \ul{x},\ul{y}^i, (a^i, \h{\a}^n(t, \ul{x},\ul{y})^{-i})),
\end{align*}
for all $a^i \in A$.

Note that when $i=j$, we have 
\begin{align*}
\pa_{x^i} H^{i,n}(t,\ul{x}, \ul{y}^i, \ul{\h{\a}}^n(t,\ul{x}, \ul{y}))=& b_2(t) y^{i,i} +\frac{1}{n} \sum_{k=1}^n G_n(k,i) \pa_x b_1(t,x^i) y^{i,k}+ \pa_{x} f_1(t,x^i) \\
&+ \frac{1}{n}G_n(i,i) \pa_x f_2(t,x^i), \\
\pa_{x^i} q^{i,n}(\ul{x})=&\pa_x q_1(x^i) + \frac{1}{n} G_n(i,i) \pa_x q_2(x^i), 
\end{align*}
and when $i \not =j$, 
\begin{align}\label{eq:qexplicit} 
\pa_{x^j} H^{i,n}(t,\ul{x}, \ul{y}^i, \ul{\h{\a}}^n(t,\ul{x}, \ul{y}))= & b_2(t) y^{i,j} +\frac{1}{n} \sum_{k=1}^n G_n(k,j) \pa_x b_1(t,x^j) y^{i,k}+ \frac{1}{n}G_n(i,j) \pa_x f_2(t,x^j), \notag \\
\pa_{x^j} q^{i,n}(\ul{x})=&\frac{1}{n} G_n(i,j) \pa_x q_2(x^j).
\end{align}

We obtain the FBSDE system for the $n$-player game
\begin{align}\label{eq:nplayer}
\begin{cases}
dX^{i,n}_t=b^{i,n}(t,\ul{X}^n_t, \ul{\h{\a}}^{i,n}(t,\ul{X}^n_t, \ul{Y}^{i,n}_t))\,dt + \sigma \, dW^{i/n}_t, \\
dY^{i,j,n}_t=-\pa_{x^j} H^{i,n}(t,\ul{X}^n_t, \ul{Y}_t^{i,n}, \ul{\h{\a}}^n(t,\ul{X}^n_t,\ul{Y}^n_t)) \, dt +\sum_{k=1}^n Z^{i,j,k,n}_t \, dW^{k/n}_t, \\ 
X^{i,n}_0= \xi^{i/n}, \\
Y^{i,j,n}_T=\pa_{x^j} q^{i,n}(\ul{X}^n_T), \quad i,j=1,\dotso, n.
\end{cases}
\end{align}

We briefly show the convergence result 
\begin{align*}
\frac{1}{n} \sum_{i=1}^n  \E\left[ \sup_{t \in [0,T]} |X_t^{i,n}-X_t^{i/n}|^2 +\sup_{t \in [0,T]} |\h{\a}^{i,n}_t-\h{\a}_t^{i/n}|^2\right]t \to 0  \ \  \text{as $n \to \infty$},
\end{align*}
which in our model is equivalent to 
\begin{align}\label{eq:convergence}
\frac{1}{n} \sum_{i=1}^n  \E\left[\sup_{t \in [0,T]} |X_t^{i,n}-X_t^{i/n}|^2 +\sup_{t \in [0,T]} |Y^{i,i,n}_t-Y_t^{i/n}|^2\right]   \to 0  \ \  \text{as $n \to \infty$}. 
\end{align}
The argument is divided into two steps. 
\begin{thm}\label{eq:thm5.1}
Under Assumption~\ref{assume4}, if $\lVert G_n - G \rVert_{\square} \to 0$ and $G$ is continuous, then the Nash equilibrium of $n$-player game converges to the corresponding graphon field game, i.e., \eqref{eq:convergence} holds. 
\end{thm}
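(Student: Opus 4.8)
The plan is to deduce \eqref{eq:convergence} from the propagation of chaos result of Section~4. The starting point is that, for the model \eqref{eq:FBSDE}, one has $\h{F}\equiv 0$ and $\h{Q}\equiv 0$ in the particle system \eqref{eq:sec4particle}, and the diagonal processes $(X^{i,n},Y^{i,i,n})_{i=1}^n$ of \eqref{eq:nplayer} solve \eqref{eq:sec4particle} with graphon $G_n$ and initial states $\xi^{i/n}$, \emph{except} for the extra backward drift $-\tfrac1n\sum_{k=1}^n G_n(\tfrac in,\tfrac kn)\pa_x b_1(t,X^{i,n}_t)Y^{i,k,n}_t-\tfrac1n G_n(\tfrac in,\tfrac in)\pa_x f_2(t,X^{i,n}_t)$ and the extra terminal term $\tfrac1n G_n(\tfrac in,\tfrac in)\pa_x q_2(X^{i,n}_T)$, read off from \eqref{eq:nplayer} and \eqref{eq:qexplicit}. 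First I would record that \eqref{eq:nplayer} is well posed with moments bounded uniformly in $n$: the coefficients are affine in $(\ul X^n,\ul Y^{i,n})$ up to the $L$-Lipschitz perturbations $b_1,\pa_x f_1,\pa_x f_2,\pa_x q_1,\pa_x q_2$, and \eqref{eq:coeffinq1} supplies exactly the dissipativity needed to run the method of continuation as in Proposition~\ref{prop:2.1}, with constants independent of $n$; by Assumption~\ref{assume4}(ii) and the Hamiltonian minimization its solution is the Nash equilibrium, and in particular $\sup_n\max_i\E[\sup_{t\le T}(|X^{i,n}_t|^2+|Y^{i,i,n}_t|^2)]<\infty$.

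The key step is to show the off-diagonal adjoint processes are negligible. By \eqref{eq:qexplicit}, for fixed $i$ and $j\neq i$, $Y^{i,j,n}$ solves a linear BSDE with terminal value $\tfrac1n G_n(\tfrac in,\tfrac jn)\pa_x q_2(X^{j,n}_T)$ and generator $b_2(t)Y^{i,j,n}_t+\tfrac1n\sum_{k=1}^n G_n(\tfrac kn,\tfrac jn)\pa_x b_1(t,X^{j,n}_t)Y^{i,k,n}_t+\tfrac1n G_n(\tfrac in,\tfrac jn)\pa_x f_2(t,X^{j,n}_t)$. Applying It\^o's formula to $|Y^{i,j,n}_t|^2$, using $b_2(t)<-100L^4$ together with Young's inequality to dominate the linear term by $-100L^4|Y^{i,j,n}_t|^2+\tfrac1{100L^4}|\mathrm{rest}_t|^2$, and estimating the empirical average $\tfrac1n\sum_k G_n(\tfrac kn,\tfrac jn)\pa_x b_1 Y^{i,k,n}_t$ by the triangle inequality in $L^2$ (so that the diagonal summand $\tfrac1n G_n(\tfrac in,\tfrac jn)\pa_x b_1 Y^{i,i,n}_t$ contributes only $O(n^{-2})$ after squaring, thanks to the uniform moment bound), one arrives, after summing over $j\neq i$, at
\begin{equation*}
100L^4\sum_{j\neq i}\E\!\int_0^T|Y^{i,j,n}_t|^2\,dt\le\frac{C}{n}+\frac{3}{100L^2}\sum_{j\neq i}\E\!\int_0^T|Y^{i,j,n}_t|^2\,dt.
\end{equation*}
Since $L\ge 1$ makes $3/(100L^2)<1$ — this is precisely the role of the constant $100$ in \eqref{eq:coeffinq1} — it follows that $\sum_{j\neq i}\E\int_0^T|Y^{i,j,n}_t|^2\,dt\le C/n$ for every $i$. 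Hence the extra drift above, which enters the $Y^{i,i,n}$ equation through the empirical average $\tfrac1n\sum_{k\neq i}G_n(\tfrac in,\tfrac kn)\pa_x b_1 Y^{i,k,n}_t$, has squared $L^2([0,T]\times\Omega)$-norm at most $\tfrac{L^2}{n}\sum_{k\neq i}\E\int_0^T|Y^{i,k,n}_t|^2\,dt=O(n^{-2})$; together with the manifestly $O(n^{-2})$ remaining perturbations, $(X^{i,n},Y^{i,i,n})_{i=1}^n$ solves \eqref{eq:sec4particle} up to a perturbation whose $\lVert\cdot\rVert^I_k$-size (averaged over $i$) is $O(n^{-2})$.

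It remains to invoke Section~4. Let $(\wt X^n,\wt Y^n,\wt Z^n)$ be the solution of \eqref{eq:sec4particle} with graphon $G_n$ and initial states $\xi^{i/n}$, whose $Y$-component $\wt Y^n$ is the fixed point of $\widehat{\Gamma}_{G_n}$. By the previous paragraph $(Y^{i,i,n})_i$ equals $\widehat{\Gamma}_{G_n}$ applied to itself plus a perturbation of small $\lVert\cdot\rVert^I_k$-size, so — since $\widehat{\Gamma}_{G_n}$ is a strict contraction in $\lVert\cdot\rVert^I_k$, as established in the proof of Theorem~\ref{thm4.1} (the coefficients of \eqref{eq:FBSDE} satisfy Assumption~\ref{assume:POC-1}) — the standard fixed-point perturbation bound followed by the BDG upgrade of Step~4 of Theorem~\ref{thm3.1} gives $\tfrac1n\sum_i\E[\sup_t|X^{i,n}_t-\wt X^{i,n}_t|^2+\sup_t|Y^{i,i,n}_t-\wt Y^{i,n}_t|^2]\to0$. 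Theorem~\ref{thm4.1} (whose hypotheses hold: $\lVert G_n-G\rVert_\square\to0$ and $G$ continuous by assumption, $\lambda\mapsto\cL(\xi^\lambda)$ continuous by Assumption~\ref{assume4}(v)) then yields $\tfrac1n\sum_i\E\sup_t[|\wt X^{i,n}_t-X^{i/n}_t|^2+|\wt Y^{i,n}_t-Y^{i/n}_t|^2]\to0$, where $(X^{i/n},Y^{i/n})$ solves \eqref{eq:FBSDE}. Combining the last two displays by the triangle inequality yields $\tfrac1n\sum_i\E\sup_t[|X^{i,n}_t-X^{i/n}_t|^2+|Y^{i,i,n}_t-Y^{i/n}_t|^2]\to0$; finally, since $\h{\a}^{i,n}_t=-b_3(t)Y^{i,i,n}_t$ and $\h{\a}^{i/n}_t=-b_3(t)Y^{i/n}_t$ with $b_3$ bounded, this is equivalent to \eqref{eq:convergence}.

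The step I expect to be the main obstacle is the off-diagonal estimate of the second paragraph: the coupling term in the adjoint BSDE mixes the order-one diagonal process $Y^{i,i,n}$ with all the off-diagonal $Y^{i,k,n}$, so a single $Y^{i,j,n}$ cannot be estimated in isolation — one must close a coupled inequality on the whole family $\{Y^{i,j,n}:j\neq i\}$, which requires the sharp $L^2$-triangle bound for the empirical average (Cauchy--Schwarz would be lossy by a factor $n$) together with the quantitative dissipativity \eqref{eq:coeffinq1} to absorb the self-coupling. A secondary technical point is the $n$-uniform moment bound for \eqref{eq:nplayer}, where the terms $\tfrac1n\sum_k\pa_x b_1(t,x^j)y^{i,k}$ are quadratic in the unknowns and must be controlled only in the aggregate.
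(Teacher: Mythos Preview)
Your proposal is correct and follows the same two–step architecture as the paper: first show that the diagonal $(X^{i,n},Y^{i,i,n})$ of the $n$-player adjoint system is close to the auxiliary particle system \eqref{eq:sec4particle}, then invoke Theorem~\ref{thm4.1} to pass to the graphon limit. The paper's Step~1/Step~2 and your decomposition coincide.

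There are two presentation differences worth recording. First, for the off-diagonal smallness $\sum_{j\neq i}\E\int_0^T|Y^{i,j,n}_t|^2\,dt\le C/n$ the paper simply cites \cite[Lemma~22]{2020arXiv200408351L}, whereas you prove it directly from It\^o's formula and the dissipativity $b_2<-100L^4$; your argument is self-contained and explains why the constant $100$ in \eqref{eq:coeffinq1} is there. Second, for the comparison with the auxiliary system the paper repeats the contraction computation of Theorem~\ref{thm:contraction} on the difference, while you phrase it as ``$(Y^{i,i,n})_i$ is an approximate fixed point of the contraction $\widehat\Gamma_{G_n}$''; these are the same estimate written two ways.

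One small wobble: in the first paragraph you appeal to the method of continuation (Proposition~\ref{prop:2.1}) for well-posedness and $n$-uniform moments of \eqref{eq:nplayer}, but Assumption~\ref{assume4} is designed to verify Assumption~\ref{assume1}/\ref{assume:POC-1} (the contraction hypotheses), not Assumption~\ref{assume2}. The paper accordingly obtains the moment bounds \eqref{eq:uniformbound} by redoing the computation of Theorem~\ref{thm:contraction} on the $n$-player system (first averaged over $i$, then for each fixed $i$). Your route can be made to work too, but it is cleaner here to stay within the contraction framework, as the paper does.
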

\begin{proof}
\textit{Step 1:} Consider an auxiliary FBSDE system 
\begin{align}
\begin{cases}
d\wt{X}^{i,n}_t = \left( b_2(t)\wt{X}^{i,n}_t  -|b_3(t)|^2 \wt{Y}^{i,n}_t + \frac{1}{n}\sum_{k=1}^n G_n(i,k) b_1(t,\wt{X}^{k,n}_t ) \right)  dt + \sigma \, d W^{i/n}_t, \\
d\wt{Y}^{i,n}_t =- \left(b_2(t) \wt{Y}^{i,n}_t+\pa_x f_1(t,\wt{X}^{i,n}_t) \right) \,dt  +\wt{Z}^{i,j,n}_t \, d W^{i/n}_t, \\
\wt{X}^{i,n}_0 =\xi^{i/n}, \\
\wt{Y}^{i,n}_T = \pa_x q_1 (\wt{X}_T^{i,n}),
\end{cases} 
\end{align}
Invoking Theorem~\ref{thm4.1},  we obtain that 
\begin{align*}
\frac{1}{n} \E\left[\sup_{t \in [0,T]} |\wt{X}_t^{i,n}-X_t^{i/n}|^2 +\sup_{t \in [0,T]} |\wt{Y}^{i,n}_t-Y_t^{i/n}|^2\right]  \to 0  \ \  \text{as $n \to \infty$}. 
\end{align*}

\vspace{5pt} 
\textit{Step 2:} We will show that 
\begin{align}\label{eq:goal}
\frac{1}{n}  \E\left[\sup_{t \in [0,T]} |\wt{X}_t^{i,n}-X_t^{i,n}|^2 +\sup_{t \in [0,T]} |\wt{Y}^{i,n}_t-Y_t^{i,n}|^2\right]  \to 0  \ \  \text{as $n \to \infty$}. 
\end{align}
 Using the same computation as in Theorem~\ref{thm:contraction}, we can prove that 
\begin{align*}
\frac{1}{n} \E \left[ \sum_{i=1}^n \int_0^T |X_t^{i,n}|^2 + \sum_{j=1}^n |Y^{i,j,n}_t |^2 \, dt \right] \leq C,
\end{align*}
where $C$ is some constant uniformly for any $n \in \N$. Using this bound, the same computation shows that for any $i \in \{1, \dotso, n\}$ 
\begin{align*}
\E\left[\int_0^T |X^{i,n}_t|^2 + \sum_{j=1}^n |Y^{i,j,n}_t|^2 \, dt  \right] \leq C,
\end{align*}
and also  
\begin{align}\label{eq:uniformbound}
\E \left[|X^{i,n}_t|^2 + \sum_{j=1}^n |Y^{i,j,n}_t|^2\right] \leq C. 
\end{align}
According to \eqref{eq:qexplicit}, for any $i \not = j$ we have the terminal $Y_T^{i,j,n}= \frac{1}{n} G_n(i,j) \pa_x q_2(X^{j,n}_T)$ and its drift  $|-\pa_{x^j} H^{i,n}| \leq C(|y^{i,j,n}| + \frac{1}{n} \sum_{k \not = i} |y^{i,k,n}|)+\mathcal{O}(1/n)$. Therefore, one can obtain 
\begin{align}\label{eq:smallY}
\E\left[|Y^{i,j,n}_t|^2 \right] \leq \frac{C}{n} \quad \text{for any $n \in \N$ and some $C>0$}
\end{align}
as in \cite[Lemma 22]{2020arXiv200408351L}.

Using monotonicity conditions, and computing as in Theorem~\ref{thm:contraction}, it can be seen that 
\begin{align*}
& \frac{1}{n} \int_0^T  \E\left[ |\wt{X}_t^{i,n}-X_t^{i,n}|^2 + |\wt{Y}^{i,n}_t-Y_t^{i,i,n}|^2\right] dt \\
& \leq \frac{C}{n} \sum_{i=1}^n \left(\frac{1}{n}  G_n(i,i) \E \left[| \pa_x q_2(X^{i,n}_T)|^2 +\int_0^T |\pa_x f_2(t,X^{i,n}_t)|^2 \, dt\right] \right) \\
& \ \ \ \ + \frac{C}{n^2} \sum_{i,k=1}^n G_n(k,i) \E\left[ \int_0^T |\pa_x b_1(t,X^{i,n}_t) Y_t^{i,k,n}|^2 \,dt  \right] \\
& \leq C/n,
\end{align*}
where we use \eqref{eq:uniformbound} and \eqref{eq:smallY}. Then by a similar argument as in \emph{Step 4} of Theorem~\ref{thm3.1}, one can easily conclude \eqref{eq:goal}.

\end{proof}

\begin{remark}
We want to point out that we are only able to prove the convergence under Assumption~\ref{assume4}. It is just because it is difficult to show the uniform boundedness of solutions of \eqref{eq:nplayer} under Assumption~\ref{assume5}. The rest of the proof actually works for both assumptions. 
\end{remark}

\appendix

\section{Measurability}

\begin{lemma}\label{lem1}
$\bs{\mu}: [0,1] \to \cC\left([0,T]; \cP_p(\R) \right)$ is measurable if and only if for any $t \in [0,T]$, 
\begin{align}\label{lem1:eq1}
\lambda \mapsto \bs{\mu}^{\lambda}(t) \in \cP_p(\R) \text{ is measurable}.
\end{align}
\end{lemma}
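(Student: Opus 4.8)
The plan is to reduce the infinite-dimensional target $\cC([0,T];\cP_p(\R))$, equipped with the uniform metric $d_\infty(\gamma,\gamma'):=\sup_{s\in[0,T]}\W_p(\gamma(s),\gamma'(s))$, to countably many coordinates. Write $\mathrm{ev}_t(\gamma):=\gamma(t)$ for the time-$t$ evaluation. The ``only if'' direction is immediate: each $\mathrm{ev}_t$ is $1$-Lipschitz, hence continuous, hence Borel measurable, so $\lambda\mapsto\bs\mu^\lambda(t)=\mathrm{ev}_t\circ\bs\mu$ is measurable for every $t$ whenever $\bs\mu$ is. For the ``if'' direction I would fix a countable dense set $D\subset[0,T]$ (say $D=(\mathbb Q\cap[0,T])\cup\{0,T\}$) and establish the key identity
\begin{align*}
\mathcal B\big(\cC([0,T];\cP_p(\R))\big)=\sigma\big(\mathrm{ev}_t:t\in D\big).
\end{align*}
Granting this, the family $\{\mathrm{ev}_t^{-1}(A):t\in D,\ A\in\mathcal B(\cP_p(\R))\}$ generates the Borel $\sigma$-algebra of the path space, so $\bs\mu$ is measurable as soon as $\{\lambda:\bs\mu^\lambda(t)\in A\}=\bs\mu^{-1}(\mathrm{ev}_t^{-1}(A))$ is measurable for every $t\in D$ and every Borel $A$; this is precisely the hypothesis, applied at the countably many times of $D$.

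To prove the displayed identity I would use two ingredients. First, $(\cP_p(\R),\W_p)$ is Polish, hence $\cC([0,T];\cP_p(\R))$ with $d_\infty$ is a separable metric space, and therefore its Borel $\sigma$-algebra is generated by the closed balls $\{\gamma:d_\infty(\gamma,\gamma_0)\le r\}$. Second, for continuous curves the supremum in $d_\infty$ is attained along any dense time set: since $s\mapsto\W_p(\gamma(s),\gamma_0(s))$ is continuous on the compact interval $[0,T]$, one has $d_\infty(\gamma,\gamma_0)=\sup_{t\in D}\W_p(\gamma(t),\gamma_0(t))$, whence
\begin{align*}
\{\gamma:d_\infty(\gamma,\gamma_0)\le r\}=\bigcap_{t\in D}\mathrm{ev}_t^{-1}\big(\{\nu\in\cP_p(\R):\W_p(\nu,\gamma_0(t))\le r\}\big)\in\sigma\big(\mathrm{ev}_t:t\in D\big).
\end{align*}
Combining the two gives $\mathcal B(\cC([0,T];\cP_p(\R)))\subseteq\sigma(\mathrm{ev}_t:t\in D)$, while the reverse inclusion holds because each $\mathrm{ev}_t$ is continuous.

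I expect the only genuinely delicate point to be this $\sigma$-algebra identity — that countably many evaluations already generate the full Borel structure of the path space — which rests on the separability of $(\cP_p(\R),\W_p)$ together with the observation that a uniform distance between continuous curves is a supremum over a countable dense set of times; the continuity of evaluations and the step ``measurable at every $t\in[0,T]$ $\Rightarrow$ measurable at every $t\in D$'' are routine. As an alternative one could note that $\gamma\mapsto(\gamma(t))_{t\in D}$ is a continuous injection between Polish spaces and invoke the Lusin--Souslin theorem to transport Borel $\sigma$-algebras, but the self-contained argument above seems preferable.
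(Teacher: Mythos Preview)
Your argument is correct but proceeds along a genuinely different route from the paper. The paper argues by approximation: for each $n$ it forms the piecewise-constant curve $\bs{\mu}_n^{\lambda}(t):=\bs{\mu}^{\lambda}(iT/n)$ on $((i-1)T/n,iT/n]$, observes that $\lambda\mapsto\bs{\mu}_n^{\lambda}$ is measurable because it factors through the finite-dimensional map $\lambda\mapsto(\bs{\mu}^{\lambda}(T/n),\dots,\bs{\mu}^{\lambda}(T))$, and then uses continuity of $t\mapsto\bs{\mu}^{\lambda}(t)$ to get $\bs{\mu}_n^{\lambda}\to\bs{\mu}^{\lambda}$ in the sup metric, whence $\bs{\mu}$ is measurable as a pointwise limit of measurable maps. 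You instead identify the Borel $\sigma$-algebra on $\cC([0,T];\cP_p(\R))$ with the cylindrical one $\sigma(\mathrm{ev}_t:t\in D)$ and reduce measurability of $\bs{\mu}$ to measurability of the coordinate maps. Your approach is the more standard path-space argument and is arguably cleaner: it stays entirely inside the target space $\cC([0,T];\cP_p(\R))$, whereas the paper's step-function approximants are not continuous and so one must implicitly work in a larger ambient space (this is what the paper's brief remark about $\cC$ being a ``topological subspace'' is gesturing at). On the other hand, the paper's argument requires no explicit description of the Borel $\sigma$-algebra and transfers verbatim to the analogous Lemma~\ref{lem3} for $L^{p,c}_{\mathcal F}$-valued maps, which is perhaps why the authors chose it.
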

\begin{proof}
The proof of `only if' is trivial. For the proof of `if' part, we note that with sup norm,  $\cC\left([0,T];\cP_p(\R)\right)$ is a topological subspace of $ \cC^0 \left([0,T];\cP_p(\R)\right)$.

For any $n \in \N$, due to \eqref{lem1:eq1} we know that 
$$\lambda \mapsto (\bs{\mu}^{\lambda}(T/n), \dotso, \bs{\mu}^{\lambda}(T) ) \text{ is measurable}.$$
We construct $\bs{\mu}_n \in \cM\left([0,T];\cP_p(\R)\right)$
\begin{align*}
\bs{\mu}_n^{\lambda}(t):= \bs{\mu}^{\lambda}\left(\frac{i}{n}\right), \, t \in \left(\frac{(i-1)T}{n},\frac{iT}{n} \right], \, i=1, \dotso, n.
\end{align*}
Then it can be easily verified that $\lambda \to \bs{\mu}_n^{\lambda}$ is measurable. By the continuity of $\bs{\mu}^{\lambda}(\cdot)$, $\lim\limits_{n \to \infty} \bs{\mu}_n^{\lambda}= \bs{\mu}^{\lambda}$ is measurable in $\lambda$. 
\end{proof}

\begin{lemma}\label{lem3}
A function $x: \ld \mapsto x^{\ld} \in L^{p,c}_{\mathcal{F}}$ belongs to $\cM L^{p,c}_{\mathcal{F}}$ if and only if $ \ld \mapsto x^{\ld}_t \in L^p_{\mathcal{F}_t}$ is measurable for any $t \in [0,T]$. 
\end{lemma}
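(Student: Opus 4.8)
The plan is to follow the strategy of Lemma~\ref{lem1}: only the ``if'' direction carries content, and it is obtained by approximating each trajectory $s\mapsto x^{\ld}_s$, regarded as an element of $\cC([0,T];L^p_{\cF_T})$, by piecewise-linear interpolations whose dependence on $\ld$ is manifestly measurable, and then letting the mesh shrink.

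First I would dispose of the ``only if'' direction: for fixed $t\in[0,T]$ the evaluation $L^{p,c}_{\cF}\ni x\mapsto x_t\in L^p_{\cF_t}$ is $1$-Lipschitz, since $\E[|x_t-x_t'|^p]\le\wt{\lVert x-x'\rVert}^p_S$, hence continuous, and composing it with the measurable map $\ld\mapsto x^{\ld}$ yields the measurable map $\ld\mapsto x^{\ld}_t$. For the ``if'' direction, assume $\ld\mapsto x^{\ld}_t$ is measurable into $L^p_{\cF_t}\hookrightarrow L^p_{\cF_T}$ for every $t\in[0,T]$. The preliminary point is that, since $x^{\ld}\in L^{p,c}_{\cF}$, the path $s\mapsto x^{\ld}_s$ is continuous and dominated by $\sup_{s\in[0,T]}|x^{\ld}_s|\in L^p$, so dominated convergence makes $s\mapsto x^{\ld}_s$ continuous from $[0,T]$ into $L^p_{\cF_T}$, hence uniformly continuous with some modulus $\omega_{\ld}$. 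For $n\in\N$ let $x^{\ld}_{(n)}\in\cC([0,T];L^p_{\cF_T})$ be the piecewise-linear interpolation of $\bigl(x^{\ld}_{iT/n}\bigr)_{i=0,\dots,n}$. Linear interpolation is a bounded linear, hence continuous, operator from $(L^p_{\cF_T})^{n+1}$ into $\cC([0,T];L^p_{\cF_T})$, and $\ld\mapsto(x^{\ld}_0,x^{\ld}_{T/n},\dots,x^{\ld}_T)$ is measurable by hypothesis, so $\ld\mapsto x^{\ld}_{(n)}$ is measurable; moreover $x^{\ld}_{(n)}(s)$ is a convex combination of its two neighbouring nodes, whence $\sup_{s\in[0,T]}\lVert x^{\ld}_{(n)}(s)-x^{\ld}_s\rVert_{L^p}\le\omega_{\ld}(T/n)\to 0$, i.e.\ $x^{\ld}_{(n)}\to x^{\ld}$ in $\cC([0,T];L^p_{\cF_T})$ for every fixed $\ld$. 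Since a pointwise limit of Borel-measurable maps into a metric space is Borel-measurable, $\ld\mapsto x^{\ld}$ is measurable into $\cC([0,T];L^p_{\cF_T})$.

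It then remains to transfer this back to $L^{p,c}_{\cF}$. Identifying processes up to indistinguishability, the map $x\mapsto(t\mapsto x_t)$ is an isometric embedding of $(L^{p,c}_{\cF},\wt{\lVert\cdot\rVert}_S)$ onto a metric subspace $S$ of $\cC([0,T];L^p_{\cF_T})$, and for any metric subspace one has $\mathcal B(S)=\{A\cap S:A\in\mathcal B(\cC([0,T];L^p_{\cF_T}))\}$, so a map into the ambient space whose range lies in $S$ is automatically measurable into $S$. Thus $\ld\mapsto x^{\ld}$ is measurable into $L^{p,c}_{\cF}$, and together with the finiteness of $\max_{\ld}\E[\sup_{t\in[0,T]}|x^{\ld}_t|^p]$ (the remaining requirement in the definition of $\cM L^{p,c}_{\cF}$, which is part of the running hypotheses) this gives $x\in\cM L^{p,c}_{\cF}$. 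I expect the only point requiring care to be the bookkeeping of which norm and Borel structure are placed on $L^{p,c}_{\cF}$, together with the $L^p$-continuity of the trajectories; the interpolation estimate and the limiting argument are routine and directly parallel Lemma~\ref{lem1}.
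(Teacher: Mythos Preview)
Your proof is correct and follows essentially the same strategy as the paper's: the ``only if'' direction is handled via continuity of the evaluation map, and the ``if'' direction by approximating $x^{\ld}$ with time-discretized processes that are manifestly measurable in $\ld$ and passing to the limit in the norm $\wt{\lVert\cdot\rVert}_S$. The only cosmetic difference is that the paper uses piecewise-\emph{constant} approximants (which are adapted but not pathwise continuous, hence live in $L^{p,2}_{\cF}$), whereas you use piecewise-\emph{linear} interpolants (which are continuous in $L^p$ but not adapted, hence live in $\cC([0,T];L^p_{\cF_T})$); your additional paragraph on transferring measurability back to the subspace $(L^{p,c}_{\cF},\wt{\lVert\cdot\rVert}_S)\hookrightarrow\cC([0,T];L^p_{\cF_T})$ via the trace Borel structure is a point the paper leaves implicit.
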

\begin{proof}
Note that  $L^{p,c}_{\mathcal{F}} \ni \bs{\eta} \mapsto \bs{\eta}_t \in L^p_{\mathcal{F}_t}$ is continuous. Therefore it can be readily seen that the measurability of $\ld \mapsto x^{\ld}$ implies the measurability of $\ld \mapsto x^{\ld}_t$ for any $t \in [0,T]$. 

Conversely, define $x^{N} \in \cM L^{p,2}_{\mathcal{F}}$ for $N \in \N$ as follows, 
\begin{align*}
x^{N,\ld}_t&:=x^{\ld}_{nT/N}, \quad \forall t \in [nT/N, (n+1)T/N), \, n =0,\dotso, N-2, \, \ld \in [0,1], \\
x^{N,\ld}_t&:=x^{\ld}_{(N-1)T/N}, \quad \forall t \in [(N-1)T/N,T], \, \ld \in [0,1]. 
\end{align*}
According to our hypothesis, it can be easily seen that 
\begin{align*}
\ld \mapsto x^{N, \ld} \in \left(L^{p,2}_{\mathcal{F}}, \wt{ \lVert \cdot \rVert}_S^p\right)
\end{align*}
is measurable, and also the limit
\begin{align*}
\ld \mapsto x^{\ld} \in \left(L^{p,c}_{\mathcal{F}}, \wt{ \lVert \cdot \rVert}_S^p \right).
\end{align*}
\end{proof}

\begin{lemma}\label{lem:measurable}
Take a polish space $\Omega$ and a Borel probability measure $(\mathcal{F},P)$ over $\Omega$. Take another measure space $(E, \Sigma, m)$.  Suppose $\rho: E \times \R \to \R$ is a real-valued function such that $x \mapsto \rho (e,x)$ is continuous for any $e \in E$, $e \mapsto \rho(e,x)$ is measurable for any $x \in \R$, and $|\rho(e,x)| \leq C(1+|x|),\, \forall (e,x) \in E \times \R$ for some positive constant $C$. Then given any measurable mapping $e \mapsto X(e) \in L^p(\Omega, \mathcal{F},P)$, the Banach-valued function $$ e \mapsto \rho(e, X(e)) \in L^p(\Omega, \mathcal{F},P)$$ is also measurable.

\end{lemma}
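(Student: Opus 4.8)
\textit{Proof strategy.} The plan is to approximate, in two stages, first the inner random variable and then the outer parameter $e$ by simple functions, exploiting the continuity of $\rho(e,\cdot)$ together with the linear growth bound $|\rho(e,x)|\le C(1+|x|)$ to pass to the limit in $L^p(\Omega,\mathcal{F},P)$.

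First I would record that the map is well defined: for each $e$ the function $\rho(e,X(e))$ is $\mathcal{F}$-measurable, being the composition of the continuous (hence Borel) map $\rho(e,\cdot)$ with $X(e)$, and $\int_\Omega|\rho(e,X(e))|^p\,dP\le C^p\int_\Omega(1+|X(e)|)^p\,dP<\infty$, so $\rho(e,X(e))\in L^p(\Omega,\mathcal{F},P)$. The elementary fact used throughout is that whenever $\phi:\R\to\R$ is continuous with $|\phi(x)|\le C(1+|x|)$, the Nemytskii map $\xi\mapsto\phi\circ\xi$ is continuous from $L^p(\Omega,\mathcal{F},P)$ into itself: if $\xi_n\to\xi$ in $L^p$, every subsequence has a further subsequence along which $\xi_{n_k}\to\xi$ $P$-a.s., hence $\phi\circ\xi_{n_k}\to\phi\circ\xi$ $P$-a.s.; since $\{|\xi_{n_k}|^p\}_k$ is uniformly integrable, so is $\{|\phi\circ\xi_{n_k}|^p\}_k$, and Vitali's theorem gives $\phi\circ\xi_{n_k}\to\phi\circ\xi$ in $L^p$; since every subsequence admits such a further subsequence, the whole sequence converges. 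Taking $\phi=\rho(e,\cdot)$, for each fixed $e$ the map $\xi\mapsto\rho(e,\xi)$ is continuous on $L^p(\Omega,\mathcal{F},P)$.

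Next I would fix $\xi\in L^p(\Omega,\mathcal{F},P)$ and show $e\mapsto\rho(e,\xi)\in L^p(\Omega,\mathcal{F},P)$ is $\Sigma$-measurable. Choosing real simple functions $\xi_j=\sum_\ell x^j_\ell\,\mathbbm{1}_{C^j_\ell}$ on $\Omega$ with $|\xi_j|\le|\xi|$ and $\xi_j\to\xi$ pointwise, the map $e\mapsto\rho(e,\xi_j)=\sum_\ell\rho(e,x^j_\ell)\,\mathbbm{1}_{C^j_\ell}$ is a finite sum of products of a scalar depending $\Sigma$-measurably on $e$ (here the hypothesis that $e\mapsto\rho(e,x)$ is measurable for each fixed real $x$ is used) with a fixed vector of $L^p(\Omega,\mathcal{F},P)$, hence $\Sigma$-measurable; and for every $e$ one has $\rho(e,\xi_j)\to\rho(e,\xi)$ in $L^p(\Omega,\mathcal{F},P)$ by dominated convergence with dominating function $C^p(1+|\xi|)^p\in L^1(\Omega,\mathcal{F},P)$ and the continuity of $\rho(e,\cdot)$. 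A pointwise limit of measurable $L^p$-valued maps being measurable, the claim follows.

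Finally, for a general measurable $e\mapsto X(e)$: since $\Omega$ is Polish, $L^p(\Omega,\mathcal{F},P)$ is separable, so there are $L^p$-valued simple functions $X_n=\sum_k\mathbbm{1}_{E^n_k}\,\xi^n_k$ with $X_n(e)\to X(e)$ in $L^p(\Omega,\mathcal{F},P)$ for $m$-a.e.\ $e$. By the previous step $e\mapsto\rho(e,X_n(e))=\sum_k\mathbbm{1}_{E^n_k}(e)\,\rho(e,\xi^n_k)$ is $\Sigma$-measurable, and for $m$-a.e.\ $e$ the same subsequence/Vitali argument (now applied to $X_n(e)$ and $\rho(e,\cdot)$) gives $\rho(e,X_n(e))\to\rho(e,X(e))$ in $L^p(\Omega,\mathcal{F},P)$; hence $e\mapsto\rho(e,X(e))$ is an $m$-a.e.\ pointwise limit of measurable $L^p$-valued maps and is therefore measurable. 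The only genuine obstacle is the mismatch between the hypothesis, which gives continuity of $\rho$ only in the \emph{pointwise} variable $x$, and the conclusion, which asks for measurability with respect to the $L^p$-topology; this is precisely what the linear-growth bound is for, since it supplies the uniform integrability needed to upgrade a.s.\ convergence of the compositions to $L^p$-convergence, the remaining ingredients being the routine ``subsequence of a subsequence'' trick and the fact that the approximations $\xi_j$ of $\xi$ can be chosen dominated by $|\xi|$.
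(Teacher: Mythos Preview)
Your proof is correct and follows essentially the same route as the paper's: both invoke separability of $L^p(\Omega,\mathcal{F},P)$ (from $\Omega$ Polish) and the Pettis/strong-measurability machinery to approximate $X$ by $L^p$-valued simple functions, then pass to the limit. You supply considerably more detail than the paper---in particular the Nemytskii continuity via Vitali and the intermediate step that $e\mapsto\rho(e,\xi)$ is measurable for each fixed $\xi$---all of which the paper compresses into ``it is readily seen''; one small point is that since $X$ is Borel measurable into a separable metric space you can take the simple approximations to converge for \emph{every} $e$ rather than $m$-a.e., which sidesteps any completeness issue for $(E,\Sigma,m)$.
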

\begin{proof}
According to \cite[Proposition 3.4.5]{MR3098996}, the Banach space $L^p(\Omega, \mathcal{F}, P)$ is separable. Therefore as a result of Pettis measurability theorem, any measurable function $X: E \to L^p(\Omega, \mathcal{F}, P)$ is also strongly measurable, i.e., $X$ can be written as a pointwise limit of simple functions $$X^n= \sum_{i=1}^{m_n}\mathbbm{1}_{S_{m_n}} x_{m_n},$$ where $m_n \in \N$, $S_1, \dotso, S_{m_n}$ is a finite collection of disjoint subsets of $E$, and $x_1, \dotso, x_{m_n} \in L^p(\Omega, \mathcal{F}, P)$. It is then readily seen that 
\begin{align*}
e \mapsto \rho(e, X^n(e))
\end{align*}
is measurable, and thus $\rho(\cdot, X(\cdot))= \lim\limits_n \rho(\cdot, X^n(\cdot))$ is measurable. 
\end{proof}

\begin{lemma}\label{lem:measurable'}

Suppose $\rho: [0,1] \times [0,T] \times \R \to \R$ is a measurable function such that $x \mapsto \psi(\ld ,t ,x)$ is continuous and grows at most linearly uniformly for $(\ld ,t) \in [0,1] \times [0,T]$. Given any measurable $\ld \mapsto X^{\ld} \in L^{p,c}_{\mathcal{F}}$, we have that 
\begin{align}\label{eq:measurableintegral}
\ld \mapsto  \int_0^{\cdot} \psi(\ld, s ,X^{\ld}(s)) \, ds  \in L^{p,c}_{\mathcal{F}}
\end{align}
is measurable. 
\end{lemma}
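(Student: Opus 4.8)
The plan is to deduce the claim from Lemma~\ref{lem:measurable}, applied not over the parameter set $[0,1]$ but over the enlarged space $E=[0,1]\times[0,T]$, and then to commute the time integral past the $\ld$-measurability by a Fubini argument for Bochner integrals, finishing with Lemma~\ref{lem3}.

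First I would record that the evaluation map $L^{p,c}_{\cF}\times[0,T]\to L^{p}_{\cF_T}$, $(\eta,s)\mapsto\eta_s$, is jointly continuous: it is $1$-Lipschitz in $\eta$ uniformly in $s$, since $\lVert\eta_s-\eta'_s\rVert_{L^p}\le\lVert\eta-\eta'\rVert_{L^{p,c}_{\cF}}$, and for each fixed $\eta$ the map $s\mapsto\eta_s$ is continuous by dominated convergence using the $L^p$ bound on $\sup_{t\le T}|\eta_t|$. Composing with the measurable map $\ld\mapsto X^{\ld}\in L^{p,c}_{\cF}$ shows that $(\ld,s)\mapsto X^{\ld}_s\in L^{p}(\Omega,\cF_T,P)$ is $\mathcal{B}([0,1])\otimes\mathcal{B}([0,T])$-measurable. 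Taking $E=[0,1]\times[0,T]$ with the product of Lebesgue measures and $\rho(\ld,s,x):=\psi(\ld,s,x)$ — which is continuous in $x$, measurable in $(\ld,s)$, and of uniform linear growth — Lemma~\ref{lem:measurable} yields that $(\ld,s)\mapsto\psi(\ld,s,X^{\ld}_s)\in L^{p}(\Omega,\cF_T,P)$ is jointly measurable, hence strongly measurable since $L^p(\Omega,\cF_T,P)$ is separable.

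Next, fix $t\in[0,T]$. Since $\lVert\psi(\ld,s,X^{\ld}_s)\rVert_{L^p}\le C(1+\lVert X^{\ld}\rVert_{L^{p,c}_{\cF}})$ is bounded in $s$, the Banach-valued map $s\mapsto\psi(\ld,s,X^{\ld}_s)$ is Bochner integrable on $[0,t]$ for every $\ld$; I claim $\ld\mapsto\int_0^t\psi(\ld,s,X^{\ld}_s)\,ds\in L^{p}_{\cF_t}$ is measurable. Approximating the strongly measurable function $(\ld,s)\mapsto\psi(\ld,s,X^{\ld}_s)$ pointwise by simple functions dominated in $L^p$-norm, uniformly in the approximation index, by $2C(1+\lVert X^{\ld}\rVert_{L^{p,c}_{\cF}})$, the Bochner dominated convergence theorem reduces the claim to a simple integrand $\sum_i\mathbbm{1}_{A_i}(\ld,s)b_i$ with $A_i\subset[0,1]\times[0,T]$ measurable and $b_i\in L^p$; for such an integrand $\int_0^t$ equals $\sum_i b_i\,\bigl|\{s\le t:(\ld,s)\in A_i\}\bigr|$, which is measurable in $\ld$ by the classical Fubini theorem, and lies in $L^p_{\cF_t}$ by progressive measurability of $X$.

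Finally, the process $t\mapsto\int_0^t\psi(\ld,s,X^{\ld}_s)\,ds$ is continuous in $t$ and satisfies $\E\bigl[\sup_{t\le T}\bigl|\int_0^t\psi(\ld,s,X^{\ld}_s)\,ds\bigr|^p\bigr]\le T^{p-1}\int_0^T C^p(1+\E|X^{\ld}_s|^p)\,ds<\infty$, so it belongs to $L^{p,c}_{\cF}$; Lemma~\ref{lem3} then upgrades the $\cF_t$-measurability just established for every $t$ to the measurability of $\ld\mapsto\int_0^{\cdot}\psi(\ld,s,X^{\ld}_s)\,ds\in L^{p,c}_{\cF}$, which is \eqref{eq:measurableintegral}. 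The main obstacle — and the reason one cannot simply reuse the simple-function approximation of $X$ from the proof of Lemma~\ref{lem:measurable} — is that $\ld$ enters $\psi(\ld,s,X^{\ld}_s)$ both through the state $X^{\ld}_s$ and directly through the coefficient; enlarging the parameter space to $[0,1]\times[0,T]$ handles both dependences at once, leaving only the routine Fubini step that commutes the $s$-integral past the $\ld$-measurability.
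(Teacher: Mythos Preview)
Your proposal is correct and follows essentially the same route as the paper: enlarge the parameter space to $E=[0,1]\times[0,T]$, apply Lemma~\ref{lem:measurable} to obtain joint measurability of $(\ld,s)\mapsto\psi(\ld,s,X^{\ld}_s)$, invoke Fubini for Bochner integrals to get measurability of $\ld\mapsto\int_0^t\psi(\ld,s,X^{\ld}_s)\,ds$ for each $t$, and finish with Lemma~\ref{lem3}. Your write-up is in fact more careful than the paper's, which simply asserts that $(\ld,s)\mapsto X^{\ld}(s)$ is measurable and cites the Bochner--Fubini theorem without the explicit simple-function reduction you provide.
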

\begin{proof}
By our assumption, it is clear that $(\ld ,s) \mapsto X^{\ld}(s)$ is measurable. Applying Lemma~\ref{lem:measurable} with $E=[0,1] \times [0,T]$, it is readily seen that 
\begin{align}\label{eq:appendix1}
(\ld ,s) \mapsto \psi(\ld, s ,X^{\ld}(s))
\end{align}
is measurable. The function \eqref{eq:appendix1} is also Bochner integrable due to our linear growth assumption in $x$. Thus by the Fubini theorem of Bochner theorem, 
$$\ld \mapsto  \int_0^{t} \psi(\ld, s ,X^{\ld}(s)) \, ds$$
is measurable for any $t \in [0,T]$. Now the measurable of \eqref{eq:measurableintegral} follows from Lemma~\ref{lem3}.

\end{proof}

\begin{remark}
Using approximation of simple functions, one can easily verify that Bochner integral coincides with Lebesgue integral.
\end{remark}

\section{Weak uniqueness of FBSDE}
The notion of weak existence and uniqueness for FBSDEs are almost the same to the ones considered for classical SDEs, see e.g. \cite{doi:10.1081/SAP-120020423,DELARUE20061712,10.1214/08-AOP0383}.

\begin{definition}
A five-tuple $(\Omega, \mathcal{F}, \mathbb{F}, P, W)$ is said to be a standard set-up if $W$ is a Brownian motion over the probability space $(\Omega, \mathcal{F},\mathbb{F},P)$ and $\mathbb{F}:=\{ \mathcal{F}\}_{t \geq 0}$ is  complete and right continuous. 
\end{definition}

Consider an FBSDE 
\begin{align}\label{eq:weak}
\begin{cases}
X_t=x+\int_0^t B(s,X_s, Y_s) \, ds + \sigma \, W_t, \\
Y_t= Q(X_T)+\int_t^T F(s,X_s,Y_s) \, ds- \int_t^T Z_s \, d W_s, 
\end{cases}
\end{align}
where $B,F,Q$ are progressively measurable functions.

\begin{definition}
A triple of processes $(X,Y,Z)$ is said to be a weak solution of \eqref{eq:weak} if there exists a standard set-up $(\Omega, \mathcal{F}, \mathbb{F}, P, W)$ such that $(X,Y,Z)$ are adapted to the filtration $\mathbb{F}$ and satisfy \eqref{eq:weak} a.s. If $(X,Y,Z)$ and $(\wt{X}, \wt{Y},\wt{Z})$ are two weak solutions of \eqref{eq:weak} on the same set-up, we say that pathwise uniqueness holds if 
\begin{align*}
\mathbb{P} \left[ (X_t,Y_t)=(\wt{X}_t,\wt{Y}_t), \, \forall t \in [0,T] \right]=1.
\end{align*}
\end{definition}
By Yamada-Watanabe Theorem for SDEs, pathwise uniqueness implies uniqueness in law. We have the same result for FBSDEs.
\begin{lemma}\label{lem:unique}
Suppose the pathwise uniqueness property holds for FBSDE \eqref{eq:weak}. Then for any two weak solutions $(X,Y,Z)$ on $(\Omega, \mathcal{F}, \mathbb{F}, P, W)$ and $(\wt{X},\wt{Y},\wt{Z})$ on $(\wt{\Omega}, \wt{\mathcal{F}}, \wt{\mathbb{F}}, \wt{P}, \wt{W})$, their distributions coincide.
\end{lemma}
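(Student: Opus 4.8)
The plan is to run the classical Yamada--Watanabe argument, adapted to the forward--backward setting; structurally nothing changes, except that one must first dispose of the non-continuous control component $Z$.

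\emph{Reduction to continuous data.} Given a weak solution $(X,Y,Z)$ on $(\Omega,\mathcal F,\mathbb F,P,W)$, put $N_t:=\int_0^t Z_s\,dW_s$, a continuous square-integrable martingale with $N_0=0$. Then the backward equation reads $Y_t=Q(X_T)+\int_t^T F(s,X_s,Y_s)\,ds-(N_T-N_t)$, so that in fact $N_t=Y_t-Y_0+\int_0^t F(s,X_s,Y_s)\,ds$ is a measurable path functional of $(X,Y)$ alone, while conversely $Z$ is recovered $dt\otimes P$-a.e. from $(N,W)$ through $d\langle N,W\rangle_t=Z_t\,dt$. Hence the law of $(X,Y,Z)$ is a fixed Borel image of the law of the continuous, $\mathcal C([0,T];\R^3)\times\mathcal C_0([0,T];\R)$-valued random element $(X,Y,N,W)$, and it is enough to prove that this latter law, call it $\mathbb P^{(i)}$, is the same for both weak solutions; note each $\mathbb P^{(i)}$ has Wiener measure $\mathbb P_W$ as its $W$-marginal since $W$ is a Brownian motion.

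\emph{A common set-up.} All relevant spaces being Polish, disintegration over the $W$-coordinate yields measurable kernels $w\mapsto Q^{(i)}(w,\cdot)$ with $\mathbb P^{(i)}(d\omega\,dw)=Q^{(i)}(w,d\omega)\,\mathbb P_W(dw)$. On $\bar\Omega:=\mathcal C([0,T];\R^3)^2\times\mathcal C_0([0,T];\R)$ set $\bar{\mathbb P}(d\omega_1\,d\omega_2\,dw):=Q^{(1)}(w,d\omega_1)\,Q^{(2)}(w,d\omega_2)\,\mathbb P_W(dw)$, with coordinate processes $(X^i,Y^i,N^i):=\omega_i$, $\bar W:=w$, and $\bar{\mathbb F}$ the usual augmentation of the filtration generated by all of them. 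The heart of the proof, and the step I expect to be the main obstacle, is to show that $\bar W$ is a $\bar{\mathbb F}$-Brownian motion. As in the diffusion case this is done in two stages: first, since the $\bar{\mathbb P}$-marginal law of $(\omega_i,w)$ equals $\mathbb P^{(i)}$ and $(X,Y,Z)$ was a weak solution, $\bar W$ is a Brownian motion for the augmented filtration generated by $\bar W$ and $(X^i,Y^i,N^i)$ alone; second, because $\omega_1$ and $\omega_2$ are conditionally independent given $w$ by construction of $\bar{\mathbb P}$, this property survives the joining of the two filtrations, so $\bar W$ remains a Brownian motion for $\bar{\mathbb F}$. Once this is known, the identities defining a weak solution of \eqref{eq:weak} hold $\bar{\mathbb P}$-a.s. for each $i$ — they are $\mathbb P^{(i)}$-a.s. relations among the coordinate processes, and the martingale and bracket structure of $N^i$ is likewise encoded in $\mathbb P^{(i)}$ — so $(X^i,Y^i,Z^i)$, with $Z^i$ read off from $(N^i,\bar W)$, is a weak solution of \eqref{eq:weak} on the single standard set-up $(\bar\Omega,\bar{\mathcal F},\bar{\mathbb F},\bar{\mathbb P},\bar W)$.

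\emph{Conclusion via pathwise uniqueness.} Both $(X^1,Y^1,Z^1)$ and $(X^2,Y^2,Z^2)$ being weak solutions on this common set-up, pathwise uniqueness forces $(X^1_t,Y^1_t)=(X^2_t,Y^2_t)$ for all $t\in[0,T]$, $\bar{\mathbb P}$-a.s., and then $N^1=N^2$ since $N$ is the same path functional of $(X,Y)$. Therefore $(X^1,Y^1,N^1,\bar W)=(X^2,Y^2,N^2,\bar W)$ $\bar{\mathbb P}$-a.s., whence $\mathbb P^{(1)}=\bar{\mathbb P}\circ(X^1,Y^1,N^1,\bar W)^{-1}=\bar{\mathbb P}\circ(X^2,Y^2,N^2,\bar W)^{-1}=\mathbb P^{(2)}$. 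Applying the Borel map that recovers $(X,Y,Z)$ shows the two original weak solutions have the same distribution; in particular $\mathcal L(X_t)=\mathcal L(\widetilde X_t)$ for every $t$, which is all that is used in Lemma~\ref{lem:weakunique}.
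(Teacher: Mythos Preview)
Your argument is correct and is precisely the classical Yamada--Watanabe construction adapted to FBSDEs: encode the non-continuous component $Z$ via the continuous martingale $N=\int Z\,dW$, disintegrate both solution laws over the common Wiener coordinate, couple them conditionally independently, and use conditional independence to keep $\bar W$ a Brownian motion in the enlarged filtration. The paper does not give its own proof but simply cites \cite[Theorem~5.1]{doi:10.1081/SAP-120020423}, where exactly this argument is carried out; your sketch is faithful to that reference, including the reduction step and the identification of the Brownian-motion verification as the crux. One small point worth making explicit in a full write-up: to conclude that $N^i_t=\int_0^t Z^i_s\,d\bar W_s$ on the product space you also need $N^i$ to be an $\bar{\mathbb F}$-martingale, which again follows from the conditional-independence structure by the same reasoning you use for $\bar W$.
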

\begin{proof}
See \cite[Theorem 5.1]{doi:10.1081/SAP-120020423}.
\end{proof}

\bibliographystyle{siam}
\bibliography{ref}

\begin{thebibliography}{10}

\bibitem{doi:10.1081/SAP-120020423}
{\sc F.~Antonelli and J.~Ma}, {\em Weak solutions of forward--backward sde's},
  Stochastic Process. Appl., 21 (2003), pp.~493--514.

\bibitem{2021arXiv210512320A}
{\sc A.~{Aurell}, R.~{Carmona}, and M.~{Lauriere}}, {\em {Stochastic Graphon
  Games: II. The Linear-Quadratic Case}}, arXiv:2105.12320,  (2021).

\bibitem{BAYRAKTAR202198}
{\sc E.~Bayraktar, A.~Cecchin, A.~Cohen, and F.~Delarue}, {\em Finite state
  mean field games with wright--fisher common noise}, Journal de
  Math{\'e}matiques Pures et Appliqu{\'e}es, 147 (2021), pp.~98--162.

\bibitem{2020arXiv201204845B}
{\sc E.~{Bayraktar}, A.~{Cecchin}, A.~{Cohen}, and F.~{Delarue}}, {\em {Finite
  state mean field games with Wright Fisher common noise as limits of
  $N$-player weighted games}}, To appear in Mathematics of Operations Research,
   (2022).

\bibitem{2020arXiv200313180B}
{\sc E.~{Bayraktar}, S.~{Chakraborty}, and R.~{Wu}}, {\em {Graphon mean field
  systems}}, arXiv:2003.13180,  (2020).

\bibitem{2020arXiv201209730B}
{\sc E.~{Bayraktar}, S.~{Chakraborty}, and X.~{Zhang}}, {\em {$k$-core in
  percolated dense graph sequences}}, arXiv:2012.09730,  (2020).

\bibitem{MR3860894}
{\sc E.~Bayraktar and A.~Cohen}, {\em Analysis of a finite state many player
  game using its master equation}, SIAM J. Control Optim., 56 (2018),
  pp.~3538--3568.

\bibitem{2020arXiv200810173B}
{\sc E.~{Bayraktar} and R.~{Wu}}, {\em {Stationarity and uniform in time
  convergence for the graphon particle system}}, arXiv:2008.10173,  (2020).

\bibitem{MR4127851}
{\sc E.~Bayraktar and X.~Zhang}, {\em On non-uniqueness in mean field games},
  Proc. Amer. Math. Soc., 148 (2020), pp.~4091--4106.

\bibitem{2021arXiv210209619B}
{\sc E.~{Bayraktar} and X.~{Zhang}}, {\em {Solvability of Infinite horizon
  McKean-Vlasov FBSDEs in Mean Field Control Problems and Games}},
  arXiv:2102.09619,  (2021).

\bibitem{MR2455626}
{\sc C.~Borgs, J.~T. Chayes, L.~Lov\'{a}sz, V.~T. S\'{o}s, and
  K.~Vesztergombi}, {\em Convergent sequences of dense graphs. {I}. {S}ubgraph
  frequencies, metric properties and testing}, Adv. Math., 219 (2008),
  pp.~1801--1851.

\bibitem{borgs2012convergent}
{\sc C.~Borgs, J.~T. Chayes, L.~Lov{\'a}sz, V.~T. S{\'o}s, and
  K.~Vesztergombi}, {\em Convergent sequences of dense graphs ii. multiway cuts
  and statistical physics}, Annals of Mathematics,  (2012), pp.~151--219.

\bibitem{2020arXiv200912144C}
{\sc P.~E. {Caines}, D.~{HO}, M.~{Huang}, J.~{Jian}, and Q.~{Song}}, {\em {On
  the Graphon Mean Field Game Equations: Individual Agent Affine Dynamics and
  Mean Field Dependent Performance Functions}}, arXiv:2009.12144,  (2020).

\bibitem{CainesPeterE2021GMFG}
{\sc P.~E. Caines and M.~Huang}, {\em Graphon mean field games and their
  equations}, SIAM J. Control Optim., 59 (2021), pp.~4373--4399.

\bibitem{MR3967062}
{\sc P.~Cardaliaguet, F.~Delarue, J.-M. Lasry, and P.-L. Lions}, {\em The
  master equation and the convergence problem in mean field games}, vol.~201 of
  Annals of Mathematics Studies, Princeton University Press, Princeton, NJ,
  2019.

\bibitem{MR4083905}
{\sc P.~Cardaliaguet and C.~Rainer}, {\em An example of multiple mean field
  limits in ergodic differential games}, NoDEA Nonlinear Differential Equations
  Appl., 27 (2020), pp.~Paper No. 25, 19.

\bibitem{MR3629171}
{\sc R.~Carmona}, {\em Lectures on {BSDE}s, stochastic control, and stochastic
  differential games with financial applications}, vol.~1 of Financial
  Mathematics, Society for Industrial and Applied Mathematics (SIAM),
  Philadelphia, PA, 2016.

\bibitem{CarmonaCooneyGravesLauriere-2019-StoGraphonGamesStatic}
{\sc R.~Carmona, D.~Cooney, C.~Graves, and M.~Lauri{\`e}re}, {\em Stochastic
  graphon games: I. the static case}, To appear in Mathematics of Operations
  Research,  (2022).

\bibitem{MR3752669}
{\sc R.~Carmona and F.~Delarue}, {\em Probabilistic theory of mean field games
  with applications. {I}}, vol.~83 of Probability Theory and Stochastic
  Modelling, Springer, Cham, 2018.
\newblock Mean field FBSDEs, control, and games.

\bibitem{MR3753660}
\leavevmode\vrule height 2pt depth -1.6pt width 23pt, {\em Probabilistic theory
  of mean field games with applications. {II}}, vol.~84 of Probability Theory
  and Stochastic Modelling, Springer, Cham, 2018.
\newblock Mean field games with common noise and master equations.

\bibitem{MR4082352}
{\sc A.~Cecchin and M.~Fischer}, {\em Probabilistic approach to finite state
  mean field games}, Appl. Math. Optim., 81 (2020), pp.~253--300.

\bibitem{MR4013871}
{\sc A.~Cecchin and G.~Pelino}, {\em Convergence, fluctuations and large
  deviations for finite state mean field games via the master equation},
  Stochastic Process. Appl., 129 (2019), pp.~4510--4555.

\bibitem{MR3981375}
{\sc A.~Cecchin, P.~D. Pra, M.~Fischer, and G.~Pelino}, {\em On the
  {C}onvergence {P}roblem in {M}ean {F}ield {G}ames: {A} {T}wo {S}tate {M}odel
  without {U}niqueness}, SIAM J. Control Optim., 57 (2019), pp.~2443--2466.

\bibitem{MR3098996}
{\sc D.~L. Cohn}, {\em Measure theory}, Birkh\"{a}user Advanced Texts: Basler
  Lehrb\"{u}cher. [Birkh\"{a}user Advanced Texts: Basel Textbooks],
  Birkh\"{a}user/Springer, New York, second~ed., 2013.

\bibitem{MR4046528}
{\sc F.~Delarue and R.~Foguen~Tchuendom}, {\em Selection of equilibria in a
  linear quadratic mean-field game}, Stochastic Process. Appl., 130 (2020),
  pp.~1000--1040.

\bibitem{DELARUE20061712}
{\sc F.~Delarue and G.~Guatteri}, {\em Weak existence and uniqueness for
  forward--backward sdes}, Stochastic Processes and their Applications, 116
  (2006), pp.~1712--1742.

\bibitem{Hu:1995aa}
{\sc Y.~Hu and S.~Peng}, {\em Solution of forward-backward stochastic
  differential equations}, Probability Theory and Related Fields, 103 (1995),
  pp.~273--283.

\bibitem{4303232}
{\sc M.~{Huang}, P.~E. {Caines}, and R.~P. {Malhame}}, {\em Large-population
  cost-coupled lqg problems with nonuniform agents: Individual-mass behavior
  and decentralized $\varepsilon$-nash equilibria}, IEEE Transactions on
  Automatic Control, 52 (2007), pp.~1560--1571.

\bibitem{MR2346927}
{\sc M.~Huang, R.~P. Malham\'{e}, and P.~E. Caines}, {\em Large population
  stochastic dynamic games: closed-loop {M}c{K}ean-{V}lasov systems and the
  {N}ash certainty equivalence principle}, Commun. Inf. Syst., 6 (2006),
  pp.~221--251.

\bibitem{MR1121940}
{\sc I.~Karatzas and S.~E. Shreve}, {\em Brownian motion and stochastic
  calculus}, vol.~113 of Graduate Texts in Mathematics, Springer-Verlag, New
  York, second~ed., 1991.

\bibitem{MR3520014}
{\sc D.~Lacker}, {\em A general characterization of the mean field limit for
  stochastic differential games}, Probab. Theory Related Fields, 165 (2016),
  pp.~581--648.

\bibitem{MR4133381}
\leavevmode\vrule height 2pt depth -1.6pt width 23pt, {\em On the convergence
  of closed-loop {N}ash equilibria to the mean field game limit}, Ann. Appl.
  Probab., 30 (2020), pp.~1693--1761.

\bibitem{MR2269875}
{\sc J.-M. Lasry and P.-L. Lions}, {\em Jeux \`a champ moyen. {I}. {L}e cas
  stationnaire}, C. R. Math. Acad. Sci. Paris, 343 (2006), pp.~619--625.

\bibitem{LASRY2006679}
{\sc J.-M. Lasry and P.-L. Lions}, {\em Jeux {\`a} champ moyen. ii -- horizon
  fini et contr{\^o}le optimal}, Comptes Rendus Mathematique, 343 (2006),
  pp.~679 -- 684.

\bibitem{MR2295621}
{\sc J.-M. Lasry and P.-L. Lions}, {\em Mean field games}, Jpn. J. Math., 2
  (2007), pp.~229--260.

\bibitem{2020arXiv200408351L}
{\sc M.~{Lauri{\`e}re} and L.~{Tangpi}}, {\em {Convergence of large population
  games to mean field games with interaction through the controls}},
  arXiv:2004.08351,  (2020).

\bibitem{lovasz2012large}
{\sc L.~Lov{\'a}sz}, {\em Large networks and graph limits}, vol.~60, American
  Mathematical Soc., 2012.

\bibitem{lovasz2006limits}
{\sc L.~Lov{\'a}sz and B.~Szegedy}, {\em Limits of dense graph sequences},
  Journal of Combinatorial Theory, Series B, 96 (2006), pp.~933--957.

\bibitem{10.1214/08-AOP0383}
{\sc J.~Ma, J.~Zhang, and Z.~Zheng}, {\em {Weak solutions for forward--backward
  SDEs---a martingale problem approach}}, The Annals of Probability, 36 (2008),
  pp.~2092 -- 2125.

\bibitem{Pardoux:1999aa}
{\sc E.~Pardoux and S.~Tang}, {\em Forward-backward stochastic differential
  equations and quasilinear parabolic pdes}, Probability Theory and Related
  Fields, 114 (1999), pp.~123--150.

\bibitem{2021arXiv210500484P}
{\sc D.~{Possama{\"\i}} and L.~{Tangpi}}, {\em {Non-asymptotic convergence
  rates for mean-field games: weak formulation and McKean--Vlasov BSDEs}},
  arXiv:2105.00484,  (2021).

\bibitem{MR2376426}
{\sc O.~Riordan}, {\em The {$k$}-core and branching processes}, Combin. Probab.
  Comput., 17 (2008), pp.~111--136.

\bibitem{Yong:1997aa}
{\sc J.~Yong}, {\em Finding adapted solutions of forward--backward stochastic
  differential equations: method of continuation}, Probability Theory and
  Related Fields, 107 (1997), pp.~537--572.

\bibitem{MR2551515}
\leavevmode\vrule height 2pt depth -1.6pt width 23pt, {\em Forward-backward
  stochastic differential equations with mixed initial-terminal conditions},
  Trans. Amer. Math. Soc., 362 (2010), pp.~1047--1096.

\bibitem{MR3699487}
{\sc J.~Zhang}, {\em Backward stochastic differential equations}, vol.~86 of
  Probability Theory and Stochastic Modelling, Springer, New York, 2017.
\newblock From linear to fully nonlinear theory.

\end{thebibliography}
\end{document}